\documentclass{amsart}


\usepackage[utf8]{inputenc}
\usepackage[T1]{fontenc}
\usepackage{lmodern}
\usepackage[english]{babel}
\usepackage{tikz}
\usepackage{tikz-cd}
\usetikzlibrary{positioning,arrows,shapes}
\usepackage{hyperref}
\usepackage[english,capitalize]{cleveref}
\usepackage{amssymb}
\usepackage{enumitem}
\setenumerate{label=(\roman*)}
\usepackage{combelow}

\theoremstyle{plain}
\newtheorem{prop}{Proposition}[section]
\newtheorem{theorem}{Theorem}
\newtheorem*{theorem*}{Theorem}
\newtheorem{question}{Question}
\newtheorem*{question*}{Question}
\newtheorem*{lemma*}{Lemma}

\newtheorem{cor}[prop]{Corollary}
\newtheorem*{cor*}{Corollary}
\newtheorem{conj}[theorem]{Conjecture}
\newtheorem*{conj*}{Conjecture}

\theoremstyle{definition}
\newtheorem{defi}[prop]{Definition}
\newtheorem*{defi*}{Definition}
\newtheorem{nota}[prop]{Notation}
\newtheorem*{nota*}{Notation}
\newtheorem{rem}[prop]{Remark}
\newtheorem*{rem*}{Remark}
\newtheorem{ex}[prop]{Example}


\newcommand{\RR}{\mathbb{R}}
\newcommand{\ZZ}{\mathbb{Z}}
\newcommand{\NN}{\mathbb{N}}
\newcommand{\CC}{\mathbb{C}}
\newcommand{\QQ}{\mathbb{Q}}
\renewcommand{\AA}{\mathbb{A}}
\renewcommand{\P}{\mathcal{P}}
\newcommand{\Q}{\mathcal{Q}}
\renewcommand{\L}{\mathcal{L}}
\renewcommand{\O}{\mathcal{O}}
\newcommand{\C}{\mathcal{C}}
\newcommand{\F}{\mathcal{F}}

\newcommand{\D}{\mathcal{D}}
\newcommand{\X}{\mathcal{X}}
\newcommand{\EE}{\mathsf{E}_6}
\newcommand{\EEE}{\mathsf{E}_7}
\newcommand{\EEEE}{\mathsf{E}_8}
\newcommand{\FF}{\mathsf{F}_4}
\newcommand{\GG}{\mathsf{G}_2}

\DeclareMathOperator{\conv}{conv}
\let\vert\relax
\DeclareMathOperator{\vert}{vert}
\DeclareMathOperator{\interior}{int}
\let\div\relax
\DeclareMathOperator{\div}{div}
\DeclareMathOperator{\Cl}{Cl}

\title{Gorenstein Fano Toric Degenerations}

\author{Christian Steinert}

\address{Mathematical Institute, Faculty of Mathematics and Natural Sciences, University of Cologne; Chair for Algebra and Representation Theory, RWTH Aachen University}

\email{steinert@art.rwth-aachen.de}

\allowdisplaybreaks

\begin{document}
	
	\begin{abstract}\noindent
		We propose a refined but natural notion of toric degenerations that respect a given embedding and show that within this framework a Gorenstein Fano variety can only be degenerated to a Gorenstein Fano toric variety if it is embedded via its anticanonical embedding. This also gives a precise criterion for reflexive polytopes to appear, which might be required for applications in mirror symmetry. For the proof of this statement we will study polytopes whose polar dual is a lattice polytope. As a byproduct we generalize a connection between the number of lattice points in a rational convex polytope and the Euler characteristic of an associated torus invariant rational Weil divisor, allowing us to show that Ehrhart-Macdonald Reciprocity and Serre Duality are equivalent statements for a broad class of varieties. Additionally, we conjecture a necessary and sufficient condition for the Ehrhart quasi-polynomial of a rational convex polytope to be a polynomial. Finally, we show that the anticanonical line bundle on a Gorenstein Fano variety with at worst rational singularities is uniquely determined by a combinatorial condition of its Hilbert polynomial.
	\end{abstract}
	
	\maketitle

\section{Introduction}

\subsection{Background and Motivation}

It has been a common concept throughout the history of mathematics to translate problems in one area of research to another area of research to solve them there. 

An especially fruitful example of this translating approach has been the study of {\em toric varieties}. These are special varieties whose algebro-geometric properties are completely determined by the combinatorial properties of certain polyhedral objects. This phenomenon has been used by many mathematicians to study more general varieties via flat degenerations to toric varieties. Notable results in this regard have been archived by Gonciulea and Lakshmibai \cite{GL}, Kogan and Miller \cite{KoM}, Caldero \cite{C}, Alexeev and Brion \cite{AB} as well as Feigin, Fourier and Littelmann \cite{FFL3}.

However, there are also constructions that do not originate from representation theory. For example, Okounkov \cite{O,O2}, Lazarsfeld and Musta\cb{t}\u a \cite{LM} as well as Kaveh and Khovanskii \cite{KK} defined and analyzed convex bodies for arbitrary projective varieties\,---\,thereby developing the theory of {\em Newton-Okounkov bodies}.

It has been shown that most of the representation theoretic toric degenerations of flag varieties can be realized via Newton-Okounkov bodies\,---\,for example by Kaveh \cite{Ka}, Kiritchenko \cite{Ki} and Fujita and Naito \cite{FN}. A unified approach has been developed by Fang, Fourier and Littelmann who presented a construction of these polytopes from representation theory via {\em birational sequences} and connected them to Newton-Okounkov Theory in \cite{FaFL}.

Lately, Anderson \cite{A} showed that Newton-Okounkov bodies yield toric degenerations under reasonable technical assumptions, thereby providing a general reason for the existence of the diverse classes of toric degenerations in representation theory mentioned in the beginning. Even more, based on an observation by Alexeev and Brion \cite{AB} he realized that these toric degenerations respect the choice of a given {\em embedding} or {\em polarization} of the original variety.  We will formalize this definition to develop the concept of embedded toric degenerations.

The goal of this paper is to study the following combinatorial property in the context of toric degenerations. A rational convex polytope is called {\em reflexive} if the polytope itself and its polar dual are lattice polytopes (i.e. they have integral vertices). There are two main reasons why these polytopes are important in our context.

Batyrev proved in \cite{B} that reflexive polytopes are in one-to-one correspondence with anticanonically polarized (normal) Gorenstein Fano toric varieties. So essentially we are working towards an answer to the following question.

\begin{question}\label{question:general}
	Which polarized Gorenstein Fano varieties admit a flat projective degeneration to an anticanonically polarized Gorenstein Fano toric variety?
\end{question}

It should be noted that we will only be interested in {\em normal} Gorenstein Fano varieties, so we will leave out the additional adjective.

This poses an interesting problem in itself but there is also another viewpoint towards this framework. In the same work \cite{B}, Batyrev explicitly constructed mirrors to Calabi-Yau hypersurfaces in toric varieties via reflexive polytopes. It deems a reasonable hope that his construction can be generalized to more arbitrary varieties if one were able to associate meaningful reflexive polytopes to those varieties. Of course, (embedded) toric degenerations would prove such a meaningful connection.

\subsection{Results}

The main achievement of this paper is the following partial answer to \cref{question:general}.

\begin{theorem}\label{thm:main}
	The limit of a polarized Gorenstein Fano variety under a toric degeneration is $\QQ$-polarized by its anticanonical divisor if and only if the polarization on the original variety is given by its anticanonical line bundle.
\end{theorem}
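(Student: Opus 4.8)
The plan is to observe that both conditions in the statement are governed by a single functional equation satisfied by the common Hilbert polynomial of the two fibres, and then to characterize each condition by that equation. Write $\dim X = n$ and set $h(m) = \chi(X,\L^m)$, a polynomial in $m$. Because an embedded toric degeneration is in particular a flat projective degeneration, the Euler characteristic is constant in the family, so $h(m) = \chi(X_0,\L_0^m)$ as well. I would therefore isolate the functional equation (R): $h(m) = (-1)^n h(-m-1)$ for all $m \in \ZZ$, and prove the chain of equivalences ``$\L \cong \O(-K_X)$'' $\Leftrightarrow$ ``(R) holds'' $\Leftrightarrow$ ``$\L_0 \sim_\QQ -K_{X_0}$''. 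Equation (R) is precisely the reciprocity relation that, under the dictionary developed above, is identified with Serre duality for the anticanonical class, which is why it simultaneously controls both fibres.

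For the toric fibre the embedding realizes $\L_0$ as a very ample line bundle on the normal toric variety $X_0$, hence as a lattice polytope $Q$ with $\chi(X_0,\L_0^m)$ equal to its Ehrhart polynomial. I would translate ``$\L_0 \sim_\QQ -K_{X_0}$'' into the statement that $Q$, recentered at the interior point determined by $-K_{X_0}$, lies in the class of polytopes whose polar dual is a lattice polytope, since that dual is spanned by the primitive ray generators of the fan of $X_0$; by Batyrev \cite{B} this is exactly the generalized reflexivity condition. The generalized Ehrhart--Macdonald reciprocity established above, which the paper shows to be equivalent to Serre duality, then yields that this condition holds if and only if the Ehrhart polynomial of $Q$ satisfies (R). This settles the right-hand equivalence, and in the affirmative case recovers $\L_0 = -K_{X_0}$ on the nose.

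For the general fibre I would apply Serre duality on the Gorenstein variety $X$, namely $\chi(X,\L^m) = (-1)^n \chi(X, \omega_X \otimes \L^{-m})$, to rewrite (R) as $\chi(X, \mathcal{N} \otimes \L^{k}) = \chi(X, \L^{k})$ for all $k \in \ZZ$, where $\mathcal{N} := \omega_X \otimes \L$; that is, $\mathcal{N}$ has the same Hilbert polynomial with respect to $\L$ as $\O_X$. The content of the left-hand equivalence is then exactly the uniqueness assertion announced in the abstract: on a Gorenstein Fano variety with at worst rational singularities this numerical coincidence forces $\mathcal{N} \cong \O_X$, equivalently $\L \cong \O(-K_X)$. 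The ``only if'' direction is a tautology, since $\L \cong \O(-K_X)$ makes $\mathcal{N} \cong \O_X$ and the displayed identity automatic; chaining the three equivalences then closes the argument.

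I expect the genuine obstacle to be this last characterization, that is, upgrading the equality of Hilbert polynomials $\chi(X,\mathcal{N}\otimes\L^k) = \chi(X,\L^k)$ to an isomorphism $\mathcal{N} \cong \O_X$. Comparing the two polynomials coefficient by coefficient through Hirzebruch--Riemann--Roch produces $c_1(\mathcal{N}) \cdot \L^{n-1} = 0$ together with a cascade of further intersection identities, but these are a priori only numerical constraints; the delicate step is to deduce from them that $c_1(\mathcal{N})$ is numerically trivial and then to conclude $\mathcal{N} \cong \O_X$ using the vanishing of $\mathrm{Pic}^0$ and the torsion-freeness of the Picard group forced by the Fano and rational-singularity hypotheses. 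Once this passage from numerical data to an actual isomorphism is secured, and once the shift in (R) is matched against the shift in the generalized reciprocity of the toric side, the equivalence of the two polarizations follows.
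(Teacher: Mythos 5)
Your overall architecture (pivoting both sides of the statement on the functional equation (R) via invariance of Euler characteristic in flat projective families) is the same as the paper's, but your proposed proof of the right-hand equivalence contains a genuine gap, and it sits exactly where the paper is forced to argue non-combinatorially. Hibi's theorem (\cref{thm:hibi}) identifies (R) with \emph{dual-integrality} of the polytope: after translation by its interior lattice point, the polar dual is a lattice polytope. Anticanonical $\QQ$-polarization of the limit, $D_\P\sim_\QQ -K_{X_\P}$, corresponds via Batyrev--Nill (\cref{thm:toricfano}) to the strictly smaller class of \emph{dual-Fano} polytopes, whose polar dual has primitive lattice vertices. These classes are genuinely different: for the third polytope of \cref{ex:dualfano}, namely $\{(x,y)\mid x\geq -1,\ y\geq -1,\ 3x+3y\leq 1\}$, reciprocity (R) holds for its Ehrhart quasi-polynomial by Hibi, yet $D_\P = D_1+D_2+\tfrac{1}{3}D_3$ is not $\QQ$-linearly equivalent to $-K_{X_\P}=D_1+D_2+D_3$. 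So ``(R) $\Rightarrow$ $\L_0\sim_\QQ -K_{X_0}$'' is false as a toric statement, and whether it becomes true under the extra constraints a degeneration imposes (e.g.\ $D_\P$ Weil by \cref{thm:dpweil}, Ehrhart quasi-polynomial a polynomial) is precisely the paper's open \cref{conj:dualfano}. Consequently your plan cannot establish the direction ``$\L\simeq\omega_X^{-1}$ implies the limit is anticanonically $\QQ$-polarized'' by combinatorics. The paper proves that direction geometrically instead: $\Cl(\X)\simeq\Cl(X)$ (\cref{prop:clx}) gives $\F^{(1)}\simeq\O_\X(-K_\X)$, and since $\X_0$ is a principal prime divisor (\cref{prop:x0}), adjunction yields $D_\P\sim -K_{X_\P}$; Hibi and Batyrev--Nill are used only in the opposite direction, where the implications point the right way (dual-Fano $\Rightarrow$ dual-integral $\Rightarrow$ (R)). A smaller inaccuracy in the same part of your argument: the special fibre is not in general polarized by a very ample line bundle with a lattice polytope; $D_\P$ is only a $\QQ$-Cartier Weil divisor and $\P$ only rational, and \cref{thm:reflexive} shows the lattice property can genuinely fail.

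Your left-hand equivalence has two further problems. First, the uniqueness statement you invoke (\cref{thm:vanishing}) requires rational singularities, which \cref{thm:main} does not assume; the paper manufactures them from the degeneration itself, using Elkik's theorems to propagate rational singularities from the toric special fibre to the general fibre. Second, your Hirzebruch--Riemann--Roch route to that uniqueness statement is, as you concede, incomplete: the coefficient identities extracted from $\chi(X,(\omega_X\otimes\L)\otimes\L^{k})=\chi(X,\L^{k})$ do not by themselves give numerical triviality of $\omega_X\otimes\L$ (a Hodge-index-type argument is needed beyond the top identity), and passing from numerical triviality to triviality of the line bundle on a \emph{singular} Fano requires structure results on the Picard group that you would have to prove. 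The paper's argument avoids all of this: Kodaira vanishing for rational singularities gives $h^0(X,\L\otimes\omega_X)=\chi(X,\L\otimes\omega_X)=(-1)^dP_\L(-1)=P_\L(0)=1$; a nonzero section gives an injection $\O_X\hookrightarrow\L\otimes\omega_X$ whose cokernel $\C$ satisfies $\chi(X,\C\otimes\L^n)=0$ for all $n$, and Serre vanishing plus global generation force $\C=0$, i.e.\ $\L\simeq\omega_X^{-1}$. If you reroute the ``only if'' direction through the total space as above and replace the Riemann--Roch sketch by this section argument, your outline becomes the paper's proof.
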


This result essentially says the following. If one were able to construct a toric degeneration of an anticanonically polarized Gorenstein Fano variety, one immediately gets that the toric limit is an anticanonically polarized $\QQ$-Gorenstein Fano variety.
This fact however is not new, it can be found for example in \cite[Theorem 3.8]{AB}. The new insight is that one can never reach anticanonically polarized limit varieties using any other polarization on the original variety! This greatly restricts the possible answers to \cref{question:general}.

However, in general our limit variety would only be $\QQ$-Gorenstein Fano. By a standard fact of toric geometry, it will be Gorenstein Fano if and only if the associated polytope is a lattice polytope. One can summarize this as follows.

\begin{theorem}\label{thm:reflexive}
	Assume that the polarized Gorenstein Fano variety $(X, \L)$ admits a toric degeneration to the polarized toric variety $(X_\P, D_\P)$ associated to a rational convex polytope $\P$. Then the polytope $\P$ is reflexive if and only if it is a lattice polytope and the line bundle $\L$ is the anticanonical line bundle over $X$.
\end{theorem}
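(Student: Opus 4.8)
The plan is to reduce the statement to \cref{thm:main} together with a purely combinatorial dictionary from toric geometry, exploiting the fact that the clause ``$\P$ is a lattice polytope'' appears on both sides of the claimed equivalence. By definition $\P$ is reflexive precisely when $\P$ and its polar dual $\P^\vee$ are both lattice polytopes. Since the right-hand side already demands that $\P$ be a lattice polytope, whenever $\P$ fails to be one both sides are false and there is nothing to prove. Hence the entire content of the theorem is: \emph{assuming $\P$ is a lattice polytope, the polar dual $\P^\vee$ is a lattice polytope if and only if $\L = -K_X$.}

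First I would normalize the position of $\P$. Because both the toric variety $X_\P$ and the linear equivalence class of $D_\P$ depend on $\P$ only up to translation, I may translate $\P$ so that the origin lies in its interior; this is the standard normalization for testing reflexivity and is legitimate exactly in the relevant case, since the polytope of $-K_{X_\P}$ always contains the origin in its interior. Writing $\P = \{\, m \in M_\RR : \langle m, u_\rho\rangle \geq -a_\rho \ \text{for all } \rho \,\}$ with $u_\rho$ the primitive ray generators of the normal fan and $a_\rho \in \QQ_{>0}$, I would recall the classical toric correspondence: $D_\P$ is Cartier if and only if $\P$ is a lattice polytope, and the anticanonical divisor is $-K_{X_\P} = \sum_\rho D_\rho$, whose associated polytope is $\{\, m : \langle m, u_\rho \rangle \geq -1 \,\}$. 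Consequently $D_\P \sim_\QQ -K_{X_\P}$ holds if and only if $a_\rho = 1$ for every $\rho$, i.e. every facet of $\P$ lies at lattice distance one from the origin. This is exactly Batyrev's characterization of reflexivity: a lattice polytope with the origin in its interior is reflexive if and only if all its facets lie at lattice distance one, equivalently if and only if $\P^\vee$ is again a lattice polytope.

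With this dictionary in place the theorem follows by feeding it into \cref{thm:main}. For the forward implication, suppose $\P$ is reflexive. Then $\P$ is in particular a lattice polytope, and by the dictionary $X_\P$ is Gorenstein Fano with $D_\P \sim -K_{X_\P}$, hence a fortiori $D_\P \sim_\QQ -K_{X_\P}$; \cref{thm:main} then yields $\L = -K_X$. For the converse, assume $\P$ is a lattice polytope and $\L = -K_X$. Then \cref{thm:main} gives $D_\P \sim_\QQ -K_{X_\P}$, and the dictionary, using that $\P$ is already a lattice polytope, upgrades this to the assertion that all facets of $\P$ are at lattice distance one, so that $\P$ is reflexive.

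The main obstacle I anticipate lies in the careful passage, inside the combinatorial dictionary, from a $\QQ$-linear equivalence $D_\P \sim_\QQ -K_{X_\P}$ back to the integral reflexivity data. One must verify that $\QQ$-linear equivalence of these two torus-invariant $\QQ$-divisors corresponds to equality of their origin-normalized polytopes rather than merely to a rational translation, and then argue that integrality of $\P$ together with $a_\rho = 1$ for all $\rho$ forces the polar dual $\P^\vee$ to have lattice vertices. A secondary technical point is ensuring that the toric limit $X_\P$ is normal and at least $\QQ$-Gorenstein Fano, so that both Batyrev's correspondence and the notion of being ``$\QQ$-polarized by the anticanonical divisor'' are legitimate; this is precisely the regime in which \cref{thm:main} is formulated, so it should be inherited directly from that result and the standing normality hypotheses.
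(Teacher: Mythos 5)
Your proposal is correct and takes essentially the same route as the paper: the paper gives no separate argument for this theorem, declaring it a direct consequence of \cref{thm:main}, and your combinatorial ``dictionary'' is exactly the content of its \cref{thm:dualfano}(iii), \cref{cor:dualfano} and the Batyrev--Nill correspondence (\cref{thm:toricfano}), merely phrased in the classical facet-at-lattice-distance-one language. The main obstacle you flag is closed by the paper's own results rather than needing a new idea: since $D_\P$ is an honest Weil divisor (\cref{thm:dpweil}), the forward direction of \cref{thm:main} actually produces an \emph{integral} linear equivalence $D_\P \sim -K_{X_\P}$, and \cref{prop:toricdivisors}(iv) converts this into $\P = \P_{-K_{X_\P}} + s$ with $s \in \ZZ^d$, so your normalization is necessarily the lattice translation by $-s$ (the unique interior lattice point of $\P$) and yields $\P - s = \P_{-K_{X_\P}}$ on the nose, whose dual $\conv(u_\rho \mid \rho \in \Sigma_\P(1))$ is a lattice polytope; no rational-translation ambiguity survives. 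One caveat applies equally to your argument and to the statement as quoted: what is actually proved is reflexivity only up to a lattice translation, i.e. quasi-reflexivity (a lattice translate of $\P$ is still a legitimate limit polytope, since translating $\P$ changes neither $X_\P$ nor the sheaves $\O_{X_\P}(\lfloor nD_\P\rfloor)$ in \cref{defi:toricdegen}, while strict reflexivity is not translation-invariant) --- the paper itself makes this correction by restating the theorem with ``quasi-reflexive'' in \cref{sec:main}, and your initial normalization step is precisely that same correction.
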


So the question that remains to be answered is the following.

\begin{question}\label{question:simplified}
	Does every anticanonically polarized Gorenstein Fano variety admit a flat projective degeneration to a (necessarily anticanonically polarized) Gorenstein Fano toric variety (whose associated polytope would necessarily be a lattice polytope)?
\end{question}

This question remains quite difficult but at least we can answer it in the case of partial flag varieties using results from \cite{S3}.

\begin{theorem}\label{thm:flag}
	Let $G$ be a complex classical group or $\GG$. Then for any parabolic $P\subseteq G$ the partial flag variety $(G/P, \omega_{G/P}^{-1})$ admits a flat projective degeneration to an anticanonically polarized Gorenstein Fano toric variety.
\end{theorem}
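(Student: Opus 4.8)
The plan is to realize each flag variety in its anticanonical embedding as the generic fibre of an explicit Newton--Okounkov degeneration and then to verify that the limiting polytope is a lattice polytope, at which point \cref{thm:reflexive} finishes the argument. First I would pin down the polarization. The anticanonical line bundle of a generalized flag variety is the very ample bundle $\L_{\lambda_P}$ attached to the dominant weight $\lambda_P = \sum_{\alpha \in R^+ \setminus R_L^+} \alpha$, the sum of the positive roots that are not roots of the Levi factor $L$ of $P$ (so $\lambda_P = 2\rho$ when $P = B$). Since $\lambda_P$ pairs strictly positively with every coroot $\alpha^\vee$ for $\alpha$ a simple root outside $L$, the bundle is very ample and $(G/P,\omega_{G/P}^{-1})$ is genuinely the flag variety in its anticanonical embedding. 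In particular the hypothesis ``$\L$ is the anticanonical line bundle'' appearing in \cref{thm:main,thm:reflexive} holds by construction.

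Second I would invoke a compatible toric degeneration. Fixing a reduced decomposition of the longest Weyl group element yields a lowest-term valuation on the section ring $\bigoplus_{m\ge 0} H^0(G/P,\L_{\lambda_P}^{\otimes m})$ whose Newton--Okounkov body is the associated string polytope. For flag varieties this value semigroup is finitely generated and saturated, so Anderson's theorem \cite{A} produces a flat projective degeneration of $(G/P,\L_{\lambda_P})$ onto the polarized toric variety $(X_\P,D_\P)$ determined by this polytope $\P$, the degeneration respecting the given polarization in the sense formalized earlier. By \cref{thm:main} its toric limit is then automatically $\QQ$-polarized by the anticanonical divisor.

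Third, and this is the combinatorial crux, I would apply the reflexivity analysis of \cite{S3}: for $G$ a complex classical group or $\GG$, the string polytope evaluated at the anticanonical weight $\lambda_P$ has integral vertices, i.e.\ it is a lattice polytope. Granting this, \cref{thm:reflexive} forces $\P$ to be reflexive, and Batyrev's correspondence then identifies $(X_\P, D_\P)$ with an anticanonically polarized Gorenstein Fano toric variety, which is exactly the assertion.

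I expect the lattice-polytope property of the third step to be the only real difficulty, and it is precisely what is imported from \cite{S3}. Integrality of the vertices at the specific weight $\lambda_P$ is a type-sensitive phenomenon rather than a formal consequence of the degeneration: in the classical types and in $\GG$ it can be read off from the explicit halfspace descriptions of the string cone, where $\lambda_P$ scales the defining data to integer values, whereas the analogous verification is expected to break down for some of the remaining exceptional types, which is why the statement is confined to classical groups and $\GG$.
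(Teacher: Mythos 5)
For the classical groups your argument is essentially the paper's own: fix the anticanonical weight, degenerate $(G/P,\L_{\lambda_P})$ to the toric variety of a string polytope via \cite{Ka} and \cite{A} (equivalently \cite[Theorem 3.2]{AB}), import from \cite{S3} that a suitable string polytope at the anticanonical weight is a lattice polytope, and conclude; whether one then finishes through \cref{thm:reflexive} and Batyrev's correspondence, as you do, or directly through \cref{thm:main} plus the observation that latticeness of $\P$ makes $D_\P\sim -K_{X_\P}$ Cartier, as the paper does, is immaterial.

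The genuine gap is in your treatment of $\GG$. You assert that the lattice-polytope property of the \emph{string} polytope at the anticanonical weight is established in \cite{S3} ``for $G$ a complex classical group or $\GG$.'' It is not: the diagrammatic result invoked by the paper (\cite[Theorem 5]{S3}) covers the classical groups only, and the paper accordingly handles $\GG$ by a different construction. There one takes the Gornitskii polytope \cite{G}, which is not a string polytope but a polytope arising from a birational sequence in the sense of Fang--Fourier--Littelmann; the toric degeneration of $(G/P,\omega_{G/P}^{-1})$ to its associated toric pair comes from \cite[Section 8.2 and Theorem 6]{FaFL} rather than from Anderson's theorem applied to a string valuation, and the fact that this polytope is a lattice polytope is checked by direct computation. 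So for $\GG$ your proof rests on a claim that is not proved in the cited source (and is not known to hold for string polytopes in type $\GG$); to close the gap you must either supply a $\GG$ string polytope with integral vertices at the anticanonical weight, or switch, as the paper does, to the Gornitskii polytope and the birational-sequence degeneration.
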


The proof of \cref{thm:main} is not straightforward and requires methods from different branches of mathematics like toric geometry, polyhedral geometry and algebraic geometry. Its schematics are depicted in \cref{fig:proof}.

Apart from the results by Batyrev and Nill, we will use methods from Ehrhart theory\,---\,especially a beautiful result by Hibi \cite{Hi}\,---\,to translate the desired properties of the polarized toric variety into combinatorial properties of the Ehrhart polynomial of its associated polytope. As a by product we will prove the following generalization of a result that is well known for lattice polytopes.

\begin{theorem}\label{thm:toricehrhart}
	Let $\P\subseteq\RR^d$ be a full-dimensional rational convex polytope. Let $X_\P$ denote the associated normal projective toric variety and $D_\P$ the associated torus invariant $\QQ$-Weil divisor. Then
	\[\#(n\P\cap\ZZ^d) = \chi(X_\P, \O_{X_\P}(\lfloor nD_\P\rfloor))\]
	and	
	\[\#(\interior n\P \cap \ZZ^d) = \chi(X, \O_{X_\P}(\lceil nD_\P\rceil + K_{X_\P}))\]
	for all $n \in \NN$.
\end{theorem}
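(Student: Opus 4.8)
The plan is to reduce both identities to the classical $M$-graded description of the global sections of a torus-invariant divisor, together with a higher-cohomology vanishing statement. First I would fix the facet presentation $\P = \{m \in \RR^d : \langle m, u_\rho\rangle \ge -a_\rho \text{ for all } \rho\}$, where the $u_\rho$ are the primitive ray generators of the normal fan $\Sigma_\P$ (so $X_\P = X_{\Sigma_\P}$) and $a_\rho \in \QQ$, giving $D_\P = \sum_\rho a_\rho D_\rho$. The round-down is $\lfloor nD_\P\rfloor = \sum_\rho \lfloor na_\rho\rfloor D_\rho$, and the standard graded description gives $H^0(X_\P, \O_{X_\P}(\lfloor nD_\P\rfloor)) = \bigoplus_m \CC\,\chi^m$ over those $m \in \ZZ^d$ with $\langle m, u_\rho\rangle \ge -\lfloor na_\rho\rfloor$ for all $\rho$. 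Since $\langle m, u_\rho\rangle \in \ZZ$ and $u_\rho$ is primitive integral, the integrality identity $\langle m, u_\rho\rangle \ge \lceil -na_\rho\rceil = -\lfloor na_\rho\rfloor \iff \langle m, u_\rho\rangle \ge -na_\rho$ identifies these $m$ with $n\P \cap \ZZ^d$, so that $h^0 = \#(n\P\cap\ZZ^d)$.

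For the second identity I would use $K_{X_\P} = -\sum_\rho D_\rho$, whence $\lceil nD_\P\rceil + K_{X_\P} = \sum_\rho(\lceil na_\rho\rceil - 1)D_\rho$, and the same description indexes sections by $m \in \ZZ^d$ with $\langle m, u_\rho\rangle \ge 1 - \lceil na_\rho\rceil$. Because $1 - \lceil na_\rho\rceil = \lfloor -na_\rho\rfloor + 1$, for integral $m$ this is equivalent to the strict inequalities $\langle m, u_\rho\rangle > -na_\rho$, whose solution set is exactly $\interior(n\P)$; thus $h^0(X_\P, \O_{X_\P}(\lceil nD_\P\rceil + K_{X_\P})) = \#(\interior(n\P)\cap\ZZ^d)$.

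The remaining and genuinely hard step is the vanishing $H^i(X_\P, \O_{X_\P}(\lfloor nD_\P\rfloor)) = 0$ and $H^i(X_\P, \O_{X_\P}(\lceil nD_\P\rceil + K_{X_\P})) = 0$ for $i > 0$, since then $\chi$ collapses to $h^0$ and both formulas follow. For the second sheaf this has the shape of a toric Kawamata–Viehweg vanishing: $nD_\P$ is ample (hence nef and big) and $\lceil nD_\P\rceil - nD_\P$ has coefficients in $[0,1)$; for the first it is the companion statement for the round-down of an ample $\QQ$-Cartier divisor. The subtlety I expect to be the main obstacle is that $X_\P$ may be singular, so $\lfloor nD_\P\rfloor$ and $\lceil nD_\P\rceil$ are in general only rank-one reflexive sheaves rather than line bundles, and the classical vanishing theorem (which assumes a Cartier twist) does not apply verbatim. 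Rather than citing a black box, I would compute each graded piece $H^i(X_\P, \O_{X_\P}(D))_m$ through Demazure's description as the reduced cohomology of the support region $V_{D,m} \subseteq N_\RR$ cut out by the violated inequalities, and verify that for $D = \lfloor nD_\P\rfloor$ and $D = \lceil nD_\P\rceil + K_{X_\P}$ this region is, for every $m$, either empty or contractible, the strict convexity of the support function of the ample divisor $nD_\P$ being what survives the rounding. Establishing this contractibility uniformly in $m$ is where the real work lies; granting it, the vanishing yields $\chi(X_\P, \O_{X_\P}(\lfloor nD_\P\rfloor)) = \#(n\P\cap\ZZ^d)$ and $\chi(X_\P, \O_{X_\P}(\lceil nD_\P\rceil + K_{X_\P})) = \#(\interior(n\P)\cap\ZZ^d)$ for all $n \in \NN$, which are the two claimed equalities. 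I would also take care to prove the two identities independently, so that the subsequent identification of Ehrhart–Macdonald Reciprocity with Serre Duality does not become circular.
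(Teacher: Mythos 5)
Your proposal is correct, and its skeleton is the same as the paper's: both arguments reduce the Euler characteristic to $h^0$ via a vanishing theorem and then identify the torus-weight decomposition of $H^0$ with the (interior) lattice points of $n\P$ through the integrality trick; your two $h^0$ computations are precisely the paper's \cref{prop:toricehrhartweil} combined with \cref{prop:round2}. The one place you diverge is the vanishing step. The paper does not prove it; it cites Demazure Vanishing \cite[Theorem 9.3.5]{CLS} for $\lfloor nD_\P\rfloor$ and the toric Kawamata--Viehweg theorem of Musta\cb{t}\u{a} \cite[Corollary 2.5]{M} for $\lceil nD_\P\rceil + K_{X_\P}$. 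Your stated reason for refusing the black box\,---\,that the classical statements assume a Cartier twist and hence do not apply to these merely reflexive sheaves\,---\,is not accurate: both references are formulated exactly for rounded $\QQ$-Cartier divisors, i.e. for Weil divisors of the form $\lfloor D\rfloor$ and $K_X+\lceil D\rceil$ with $D$ nef (and big), which is why the paper can quote them verbatim; moreover Musta\cb{t}\u{a}'s proof avoids Serre duality, which resolves the circularity concern you rightly raise (the paper makes the same remark when choosing that reference). That said, the Demazure-style argument you sketch is essentially how those cited theorems are proved, and the contractibility you defer as ``the real work'' is not a serious obstacle: the same integrality observation you already exploited for $h^0$ shows that for $m\in\ZZ^d$ the violated-inequality region attached to $\lfloor nD_\P\rfloor$ (resp. to $\lceil nD_\P\rceil+K_{X_\P}$) coincides with the region attached to the $\QQ$-divisor $nD_\P$ (resp. its strict-inequality version), and the convexity of the support function of the ample $\QQ$-Cartier divisor $nD_\P$ then makes each such region empty or contractible, uniformly in $m$. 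So your plan does close, but as written it leaves the decisive step as a granted assumption; to make it a proof you should either carry out that contractibility computation or simply invoke the two vanishing theorems above, as the paper does.
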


Using Invariance of Euler Characteristic in flat projective families (see \cite[Theorem 24.7.1]{V}) we will be able to deduce the following statement.

\begin{theorem}\label{cor:twopolynomials}
	Under the assumptions of \cref{thm:main}, the Hilbert polynomial associated to $X$ and $\L$ coincides with the Ehrhart quasi-polynomial associated to $\P$.
\end{theorem}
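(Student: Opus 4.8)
The plan is to read off both sides of the asserted identity as Euler characteristics of sheaves on the two fibres of the degenerating family and then to transport one to the other by invariance of the Euler characteristic. By definition the Hilbert polynomial of $(X,\L)$ is the function $n\mapsto\chi(X,\L^{\otimes n})$, which is a genuine polynomial in $n$ by the standard theory of Hilbert polynomials (Snapper's lemma) and which agrees with $\dim H^0(X,\L^{\otimes n})$ once $n$ is large since $\L$ is ample. On the toric side, \cref{thm:toricehrhart} identifies the Ehrhart quasi-polynomial of $\P$ with $n\mapsto\chi\bigl(X_\P,\O_{X_\P}(\lfloor nD_\P\rfloor)\bigr)$ for every $n\in\NN$. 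Thus it suffices to prove the single chain of equalities
\[\chi(X,\L^{\otimes n})=\chi\bigl(X_\P,\O_{X_\P}(\lfloor nD_\P\rfloor)\bigr)\qquad(n\in\NN),\]
after which the two functions agree at every natural number and hence coincide as (quasi-)polynomials; in particular the Ehrhart quasi-polynomial is forced to be an honest polynomial.

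To establish this I would work with the flat projective family $\pi\colon\X\to B$ underlying the embedded toric degeneration, whose generic fibre is $(X,\L)$ and whose special fibre is $(X_\P,D_\P)$, together with a relatively ample line bundle $\widehat{\L}$ on $\X$ restricting to $\L$ on the generic fibre. For each $n$ the sheaf $\widehat{\L}^{\otimes n}$ is a line bundle on the total space, hence flat over $B$ because $\pi$ is, so \cite[Theorem 24.7.1]{V} applies and yields that $t\mapsto\chi\bigl(\X_t,\widehat{\L}^{\otimes n}|_{\X_t}\bigr)$ is constant on $B$. Comparing the generic and special fibres gives $\chi(X,\L^{\otimes n})=\chi\bigl(X_\P,\widehat{\L}^{\otimes n}|_{X_\P}\bigr)$, so everything reduces to identifying the restriction of the limiting line bundle.

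The crux of the argument, and the step I expect to be the main obstacle, is therefore the identification $\widehat{\L}^{\otimes n}|_{X_\P}=\O_{X_\P}(\lfloor nD_\P\rfloor)$ as a torus-invariant sheaf on the toric limit. The difficulty is genuine because $\P$ is only assumed rational, so $D_\P$ is in general a $\QQ$-Weil divisor and $\O_{X_\P}(\lfloor nD_\P\rfloor)$ is merely reflexive rather than locally free; one must check that the fibrewise restriction of the genuine line bundle $\widehat{\L}^{\otimes n}$ picks out exactly the rounded-down divisorial sheaf and not some other reflexive extension. I would settle this directly from the definition of embedded toric degeneration, which is engineered so that the polarization $\widehat{\L}$ restricts on the special fibre to the divisor $D_\P$ attached to $\P$ and so that its graded sections degenerate to the lattice-point monomials of the dilates $n\P$. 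Tracking this grading through the Rees-algebra presentation of the family shows that the sections of $\widehat{\L}^{\otimes n}|_{X_\P}$ are precisely $\bigoplus_{m\in n\P\cap\ZZ^d}\CC\,\chi^m$, which pins the sheaf down to $\O_{X_\P}(\lfloor nD_\P\rfloor)$.

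Assembling the pieces, the chain
\[\chi(X,\L^{\otimes n})=\chi\bigl(X_\P,\widehat{\L}^{\otimes n}|_{X_\P}\bigr)=\chi\bigl(X_\P,\O_{X_\P}(\lfloor nD_\P\rfloor)\bigr)=\#(n\P\cap\ZZ^d)\]
holds for all $n\in\NN$, the outer terms being the Hilbert polynomial and the Ehrhart quasi-polynomial respectively, which completes the proof.
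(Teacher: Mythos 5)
Your overall architecture matches the paper's: identify both sides as Euler characteristics via \cref{thm:toricehrhart}, transport across the family by invariance of Euler characteristic in flat projective families, and then upgrade agreement on $\NN$ to agreement of the (quasi-)polynomials, forcing the Ehrhart quasi-polynomial to be a polynomial. However, the middle step as you set it up has a genuine flaw. You posit a single relatively ample line bundle $\widehat{\L}$ on the total space $\X$ and propose to prove $\widehat{\L}^{\otimes n}|_{\X_0}\simeq\O_{X_\P}(\lfloor nD_\P\rfloor)$ as the ``crux''. This identification is impossible in general: the restriction of a line bundle is a line bundle, whereas $\O_{X_\P}(\lfloor nD_\P\rfloor)$ is locally free only when $\lfloor nD_\P\rfloor$ is Cartier, and for a merely rational polytope $\P$ the divisor $D_\P$ is only $\QQ$-Cartier (\cref{prop:dpample}), so this fails for all $n$ not divisible by the Cartier index. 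Moreover, the sheaves $\O_{X_\P}(\lfloor nD_\P\rfloor)$ are not tensor powers of one another: $\lfloor nD_\P\rfloor\neq n\lfloor D_\P\rfloor$ in general, and even when $D_\P$ is Weil (which \cref{thm:dpweil} does guarantee here), $\O_{X_\P}(nD_\P)$ is the reflexive hull of $\O_{X_\P}(D_\P)^{\otimes n}$ rather than the tensor power itself. So no single line bundle $\widehat{\L}$ can produce the whole sequence, and the Rees-algebra argument you sketch cannot rescue an object that does not exist. You sensed this tension yourself (``merely reflexive rather than locally free'') but did not resolve it, and it cannot be resolved in the form you chose.

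The repair is to use \cref{defi:toricdegen} exactly as stated: a toric degeneration in this paper comes equipped, for every $n\in\NN$ separately, with a divisorial sheaf $\F^{(n)}$ on $\X$ satisfying $\F^{(n)}|_{\X_t}\simeq\L^n$ for $t\neq0$ and $\F^{(n)}|_{\X_0}\simeq\O_{X_\P}(\lfloor nD_\P\rfloor)$; these sheaves are not required to be line bundles nor powers of $\F^{(1)}$. With this, your ``crux'' is part of the hypotheses and nothing remains to be identified: one applies invariance of Euler characteristic to each $\F^{(n)}$ individually (each is divisorial, hence torsion-free on the integral variety $\X$, hence flat over the smooth curve $\AA^1$, so \cite[Theorem 24.7.1]{V} applies), obtaining $\chi(X,\L^n)=\chi(X_\P,\O_{X_\P}(\lfloor nD_\P\rfloor))=\#(n\P\cap\ZZ^d)$ for all $n\in\NN$, and then concludes with the same quasi-polynomial argument you give. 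This is precisely the paper's proof via \cref{thm:twopolynomials}; the paper additionally invokes \cref{thm:dpweil} to write $\lfloor nD_\P\rfloor=nD_\P$, which is not needed for the equality above but is needed for the companion statement about $\chi(X,\L^n\otimes\omega_X)$.
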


It has been known for quite some time that Serre Duality on toric varieties implies Ehrhart-Macdonald Reciprocity (see for example \cite[11.12.4]{D}). However, this theorem allows us to conclude that for a broad class of varieties the reverse implication also holds true. Hence it is indeed possible to prove one of the most famous results of algebraic geometry via lattice point counting!

The final step of our proof uses Serre Vanishing, Kodaira Vanishing for Rational Singularities and two results by Elkik \cite{E} on rational singularities in flat families. The main ingredient is the following observation.

\begin{theorem}\label{thm:vanishing}
	Let $X$ be a Gorenstein Fano variety of dimension $d$ that has rational singularities and let $\L$ be an ample line bundle. Then the line bundle $\L$ is isomorphic to the anticanonical line bundle $\omega_X^{-1}$ if and only if the Hilbert polynomial $P_\L(n) := \chi(X, \L^n)$ of $\L$ fulfills 
	\[ P_\L(n) = (-1)^d P_\L(-n-1)\]
	for all $n\in\NN$.
\end{theorem}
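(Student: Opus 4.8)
The plan is to treat the two implications separately, with the forward direction an immediate consequence of Serre Duality and the reverse direction the substantive part.

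For the \emph{forward} implication, suppose $\L\cong\omega_X^{-1}$. Since $X$ is Gorenstein it is Cohen--Macaulay and $\omega_X$ is a genuine line bundle, so Serre Duality applies to the locally free sheaf $\L^n=\omega_X^{-n}$ and yields $\chi(X,\F)=(-1)^d\chi(X,\F^\vee\otimes\omega_X)$. Taking $\F=\omega_X^{-n}$ gives $\F^\vee\otimes\omega_X=\omega_X^{\,n+1}=\L^{-(n+1)}$, hence $P_\L(n)=(-1)^dP_\L(-n-1)$. (That $P_\L$ is a polynomial of degree $d$ is the usual asymptotic Riemann--Roch for the ample bundle $\L$.)

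For the \emph{reverse} implication I would first rewrite the hypothesis. Applying Serre Duality to $\chi(X,\L^{-n-1})$ shows that $P_\L(n)=(-1)^dP_\L(-n-1)$ is equivalent to the identity $\chi(X,\L^n)=\chi(X,\L^{n+1}\otimes\omega_X)$; since both sides are polynomials agreeing on all of $\NN$, this holds for every $n\in\ZZ$. Setting $\mathcal{N}:=\L\otimes\omega_X$, the identity becomes $\chi(X,\L^n)=\chi(X,\L^n\otimes\mathcal N)$, and the goal reduces to proving $\mathcal N\cong\O_X$. The key analytic input is \textbf{Kodaira Vanishing for rational singularities}: because $X$ is Gorenstein Fano with rational singularities, for every $n\ge 0$ both $\L^n=\omega_X\otimes(\L^n\otimes\omega_X^{-1})$ and $\L^n\otimes\mathcal N=\omega_X\otimes\L^{n+1}$ are of the form $\omega_X\otimes(\text{ample})$ — the twist $\L^n\otimes\omega_X^{-1}$ being ample as a product of ample bundles for $n\ge1$ and ample by the Fano condition for $n=0$, and $\L^{n+1}$ ample for all $n\ge0$ — so all higher cohomology vanishes. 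Hence $h^0(X,\L^n)=\chi(X,\L^n)=\chi(X,\L^n\otimes\mathcal N)=h^0(X,\L^n\otimes\mathcal N)$ for every $n\ge0$.

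The final step upgrades this numerical coincidence to an isomorphism. Taking $n=0$ gives $h^0(X,\mathcal N)=\chi(X,\O_X)=1$, so there is a nonzero section $s\in H^0(X,\mathcal N)$ with effective divisor $E=\div(s)\ge0$, and it suffices to prove $E=0$. Multiplication by $s$ defines an injective linear map $H^0(X,\L^n)\hookrightarrow H^0(X,\L^n\otimes\mathcal N)$ (injective because $X$ is integral and $s\ne0$), and since the two spaces have equal finite dimension it is an \emph{isomorphism} for every $n\ge0$. Choosing $n\gg0$ so that the ample bundle $\L^n\otimes\mathcal N$ is globally generated, any point $p$ in the support of $E$ admits a section $u\in H^0(X,\L^n\otimes\mathcal N)$ with $u(p)\ne0$; but surjectivity forces $u=s\cdot t$, which vanishes along $E\ni p$, a contradiction. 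Therefore $E=0$, giving $\mathcal N\cong\O_X$ and $\L\cong\omega_X^{-1}$. I expect the main obstacle to lie precisely in this passage from Euler characteristics to the bundle itself: one must check carefully that the ampleness of the relevant twists makes Kodaira Vanishing for rational singularities applicable for \emph{all} $n\ge0$ (not merely $n\gg0$, where Serre Vanishing would suffice), since the section-counting argument crucially uses the equality $h^0(\L^n)=h^0(\L^n\otimes\mathcal N)$ all the way down to $n=0$.
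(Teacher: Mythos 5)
Your proof is correct and shares the paper's skeleton: the forward direction is the identical Serre Duality computation, and the reverse direction likewise combines Serre Duality with Kodaira Vanishing for rational singularities at $n=0$ to produce a nonzero section $s\in H^0(X,\L\otimes\omega_X)$, the goal then being to show that multiplication by $s$ is an isomorphism $\O_X\to\L\otimes\omega_X$. The endgames genuinely differ, however. The paper argues sheaf-theoretically: from the short exact sequence $0\to\O_X\stackrel{s}{\to}\L\otimes\omega_X\to\C\to0$ and additivity of Euler characteristics it gets $\chi(X,\C\otimes\L^n)=(-1)^dP_\L(-n-1)-P_\L(n)=0$ for all $n\in\NN$, and then kills $\C$ at a single $n\gg0$ using Serre Vanishing together with global generation of $\C\otimes\L^n$ (a globally generated sheaf with no nonzero global sections is zero); beyond the $n=0$ step it never invokes Kodaira Vanishing again. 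You argue on section spaces instead: the equality $h^0(X,\L^n)=h^0(X,\L^n\otimes\mathcal{N})$\,---\,which costs extra applications of Kodaira Vanishing to $\L^n=\omega_X\otimes(\L^n\otimes\omega_X^{-1})$, legitimate since $\omega_X^{-1}$ is ample\,---\,makes multiplication by $s$ surjective on global sections by dimension count, and global generation of $\L^n\otimes\mathcal{N}$ at one $n\gg0$ then forbids any zero of $s$. Both endgames rest on the same asymptotic input and are essentially interchangeable; yours avoids introducing the cokernel sheaf at the price of leaning harder on the Fano hypothesis. Two small remarks: first, your closing worry is unfounded, since you use the equality of section spaces only at $n=0$ (to get $s$) and at the single large $n$, where Serre Vanishing already gives the required vanishing\,---\,exactly the economy the paper exploits; second, global generation of $\L^n\otimes\mathcal{N}$ for $n\gg0$ comes from Serre's theorem applied to the coherent sheaf $\mathcal{N}$ twisted by powers of $\L$, not from ampleness of $\L^n\otimes\mathcal{N}$ (ample line bundles need not be globally generated), so that parenthetical adjective is doing no work in your argument.
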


Our statement bears resemblance to a result by Kaveh and Villella in \cite{KV}, who were able to classify {\em anticanonical objects} in families of polyhedra associated to flag varieties purely via combinatorial conditions. However, their result needs stronger assumptions like Minkowski property of the occurring polytopes, which we do not need. 

During our proof we stumbled upon a delicate detail. First of all, the limit divisors in our setting share an interesting property.

\begin{theorem}\label{thm:dpweil}
	Under the assumptions of \cref{thm:main}, the divisor $D_\P$ is Weil.
\end{theorem}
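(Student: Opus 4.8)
The plan is to reduce the statement to an integrality condition on the facet parameters of $\P$ and then to extract that condition from the subleading term of the Ehrhart polynomial. Write $\P$ in its facet presentation $\P=\{m\in\RR^d : \langle m,u_F\rangle\ge -a_F\ \text{for every facet }F\}$, where $u_F$ is the primitive inner normal of $F$, so that $D_\P=\sum_F a_F D_F$; thus $D_\P$ is Weil precisely when every $a_F$ lies in $\ZZ$. The only input I take from the hypotheses of \cref{thm:main} is \cref{cor:twopolynomials}: the Ehrhart quasi-polynomial $L_\P(n):=\#(n\P\cap\ZZ^d)$ coincides with the Hilbert polynomial of $(X,\L)$ and is therefore a \emph{genuine} polynomial of degree $d$, i.e. it has period $1$ with no nontrivial periodic part.

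Next I would pass to the boundary lattice-point count $B(n):=\#(\partial(n\P)\cap\ZZ^d)=L_\P(n)-\#(\interior(n\P)\cap\ZZ^d)$. By Ehrhart--Macdonald reciprocity for rational polytopes one has $\#(\interior(n\P)\cap\ZZ^d)=(-1)^d L_\P(-n)$ as quasi-polynomials (equivalently, this follows from the second identity of \cref{thm:toricehrhart} combined with invariance of the Euler characteristic in the flat family), so the interior count is again a genuine polynomial. Writing $L_\P(n)=c_d n^d+c_{d-1}n^{d-1}+\dots$, a comparison of leading terms in $B(n)=L_\P(n)-(-1)^d L_\P(-n)$ makes the $n^d$ terms cancel and yields $B(n)=2c_{d-1}n^{d-1}+O(n^{d-2})$; in particular the coefficient of $n^{d-1}$ in $B$ is the \emph{constant} $2c_{d-1}$.

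The heart of the argument is to compute this same leading coefficient geometrically. Decomposing $\partial(n\P)$ into the relative interiors of its faces, the faces of codimension at least two contribute only $O(n^{d-2})$, so to top order $B(n)=\sum_F \#(\interior(nF)\cap\ZZ^d)+O(n^{d-2})$. Now $nF$ lies in the affine hyperplane $\{\langle\,\cdot\,,u_F\rangle=-n a_F\}$, and since $u_F$ is primitive this hyperplane meets $\ZZ^d$ if and only if $n a_F\in\ZZ$. Hence $\#(\interior(nF)\cap\ZZ^d)$ equals $v_F\,n^{d-1}+O(n^{d-2})$ when $n a_F\in\ZZ$ and vanishes identically otherwise, where $v_F>0$ is the normalized relative volume of $F$ with respect to the lattice induced on its hyperplane. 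Consequently the coefficient of $n^{d-1}$ in $B$ is the periodic, nonnegative function $n\mapsto\sum_F v_F\,\mathbf{1}[\,n a_F\in\ZZ\,]$.

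Finally I would combine the two computations: this periodic function must equal the constant $2c_{d-1}$. Evaluating at $n=1$ keeps only the summands with $a_F\in\ZZ$, whereas evaluating at $n=N$, the least common multiple of all facet denominators, turns every summand on; constancy forces $\sum_{a_F\in\ZZ}v_F=\sum_F v_F$, and since each $v_F$ is strictly positive this is possible only when every facet satisfies $a_F\in\ZZ$. Therefore $D_\P=\sum_F a_F D_F$ is a Weil divisor. I expect the main obstacle to be the facet-by-facet leading-term analysis of $B(n)$: one must argue carefully that a facet with $n a_F\notin\ZZ$ contributes \emph{no} lattice points, and that the lower-dimensional faces really are of lower order, so that positivity of the $v_F$ rules out any cancellation that could conceal a noninteger offset.
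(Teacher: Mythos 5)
Your proof is correct in substance, but it takes a genuinely different route from the paper's, and as written it contains one circularity that must be repaired. The paper's proof is algebro-geometric: using the restriction isomorphism $\Cl(\X)\simeq\Cl(X)$ of \cref{prop:clx} and the hypothesis that the divisor on $X$ is Cartier, it shows $\F^{(n)}\simeq\O_\X(n\D)$ for a single Weil divisor $\D$ on the degeneration space; restricting to the special fiber gives $\lfloor nD_\P\rfloor\sim nE$ with $E=\D|_{\X_0}$, and a short argument with rational functions (effectivity of $\div(f^{-l}g)$ plus constancy of regular functions on the projective variety $X_\P$) forces $D_\P=E+\div(f)$ to be integral. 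You instead work entirely on the Ehrhart side: polynomiality of $L_\P$ makes the degree-$(d-1)$ coefficient of $B(n)=L_\P(n)-(-1)^dL_\P(-n)$ constant, while the face decomposition of $\partial(n\P)$ identifies that coefficient with the periodic function $n\mapsto\sum_F v_F\,\mathbf{1}[na_F\in\ZZ]$, and positivity of the $v_F$ forces every $a_F\in\ZZ$. The technical steps you flag do go through: primitivity of $u_F$ makes the affine hull of $nF$ lattice-free whenever $na_F\notin\ZZ$; when $na_F\in\ZZ$, translating by $na_Fw$ for a lattice vector $w$ with $\langle w,u_F\rangle=1$ exhibits the facet count as the Ehrhart quasi-polynomial of the \emph{fixed} full-dimensional rational polytope $F+a_Fw$ in the hyperplane lattice, whose leading coefficient is the constant $v_F>0$; and codimension-two faces contribute quasi-polynomials of degree at most $d-2$. (Your ``evaluation at $n=1$ and $n=N$'' should be read as comparing the periodic coefficient functions of two quasi-polynomials agreeing on all positive integers, i.e. along the residues $1$ and $0$ modulo $N$; that is what your wording amounts to.)

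The caveat: you cite \cref{cor:twopolynomials} as your sole input, but in the paper that statement is deduced from \cref{thm:twopolynomials}, whose proof begins by invoking \cref{thm:dpweil}\,---\,the very statement you are proving\,---\,so your argument would be circular if inserted as is. The fix is easy: all you need is that $L_\P$ agrees for $n>0$ with the Hilbert polynomial $P_\L$ and hence is a polynomial, and this follows without \cref{thm:dpweil} from \cref{defi:toricdegen} (which gives $\F^{(n)}|_{\X_0}\simeq\O_{X_\P}(\lfloor nD_\P\rfloor)$), the first identity of \cref{thm:toricehrhart} (stated for the $\QQ$-Weil divisor $D_\P$, floors included), and invariance of the Euler characteristic in flat projective families; this is exactly the content of \cref{cor:degenerationpolynomial}. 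With that substitution your proof stands, and it is noteworthy that it proves more than is needed here: it establishes the purely combinatorial implication that every quasi-lattice polytope has integral facet data, i.e. one direction of \cref{conj:quasilattice}, and with it the implication that a dual-integral quasi-lattice polytope is dual-Fano in \cref{conj:dualfano}. What the paper's proof buys in exchange is the explicit divisor $\D$ with $\F^{(n)}\simeq\O_\X(n\D)$ and $D_\P\sim\D|_{\X_0}$, an object your argument never produces but which the paper reuses in the adjunction step of the proof of \cref{thm:twopolynomials}.
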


Additionally, we noticed in \cref{thm:twopolynomials} that the Ehrhart quasi-polynomial of the polytope $\P$ must be a polynomial. Both of these properties can be thought of as integrality properties and we propose that they are in fact connected. This leads us to the following opinion.

\begin{conj}\label{conj:quasilattice}
	Let $\P$ be a full-dimensional rational convex polytope with associated toric variety $(X_\P, D_\P)$. Then the Ehrhart quasi-polynomial of $\P$ is a polynomial if and only if $D_\P$ is Weil.
\end{conj}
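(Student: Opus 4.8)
The plan is to treat \cref{thm:toricehrhart} as the only bridge between the two sides of the conjecture: it identifies the Ehrhart quasi-polynomial $n\mapsto\#(n\P\cap\ZZ^d)$ with the toric Euler characteristic $n\mapsto\chi\bigl(X_\P,\O_{X_\P}(\lfloor nD_\P\rfloor)\bigr)$. Writing $\P=\{m\in\RR^d:\langle m,u_\rho\rangle\geq -a_\rho \text{ for all rays }\rho\}$ so that $D_\P=\sum_\rho a_\rho D_\rho$, one gets
\[ \#(n\P\cap\ZZ^d)\;=\;\chi\bigl(X_\P,\O_{X_\P}(\lfloor nD_\P\rfloor)\bigr),\qquad \lfloor nD_\P\rfloor\;=\;\sum_\rho\lfloor n a_\rho\rfloor D_\rho, \]
and the right-hand side is a quasi-polynomial whose period divides the denominator of $\P$. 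The conjecture thus becomes the purely toric assertion that this Euler-characteristic quasi-polynomial collapses to a genuine polynomial precisely when every $a_\rho\in\ZZ$. I would split the equivalence into its two implications and attack each through the $M$-graded description of toric (local) cohomology.

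For the necessity direction (polynomial $\Rightarrow$ $D_\P$ Weil) I would argue by contraposition. If some $a_{\rho_0}=p/q\notin\ZZ$ with $q>1$, the coefficient $\lfloor na_{\rho_0}\rfloor$ of $D_{\rho_0}$ is genuinely quasi-linear of period $q$, so the polytopes $P_{\lfloor nD_\P\rfloor}$ fail to dilate linearly across residue classes of $n$ modulo $q$. The idea is to isolate the periodic defect by comparing, for $n$ in a fixed residue class, the true count with the ``frozen'' count coming from the virtual linear dilation $nD_\P$, and to show that the discrepancy is carried by the facet $F_{\rho_0}$ together with the lower faces it meets. Concretely I would localize the $M$-graded cohomology near the ray $\rho_0$ and read off a constituent-dependent correction through the fractional part $\{na_{\rho_0}\}$. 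The hard part here is that the top two Ehrhart coefficients are always period $1$, so all periodicity lives in codimension $\geq 2$; a single fractional facet could a priori be compensated by the arithmetic of neighbouring faces. Excluding this \emph{period collapse} for non-integral support data is the first genuine obstacle and is not formal.

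For the sufficiency direction ($D_\P$ Weil $\Rightarrow$ polynomial) the combinatorics simplify: if every $a_\rho\in\ZZ$ then $\lfloor nD_\P\rfloor=nD_\P$ and $L_\P(n)=\chi\bigl(X_\P,\O_{X_\P}(nD_\P)\bigr)$ is the Euler characteristic of an honest $\ZZ$-linear family of reflexive sheaves. One is reduced to the algebro-geometric question of whether $n\mapsto\chi(X_\P,\O_{X_\P}(nD))$ is a polynomial for an integral nef Weil divisor $D$. The natural attempt is a toric asymptotic Riemann--Roch: choose a toric resolution $\pi:\widetilde X\to X_\P$ and exploit that toric varieties have rational singularities, so that $\chi$ can be computed upstairs where every divisor is Cartier and Euler characteristics of Cartier families are polynomial. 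The difficulty is that $\pi^*D$ is only a $\QQ$-divisor, so $\O_{X_\P}(nD)=\pi_*\O_{\widetilde X}(\lfloor\pi^*nD\rfloor)$, and the round-down reintroduces exactly the periodicity one is trying to kill.

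I expect this last point to be the main obstacle, and it is more than technical: integrality of $D_\P$ alone does \emph{not} force polynomiality. Already $\mathbb{P}(1,1,2)$ with $D$ a prime torus-invariant divisor generating $\Cl(\mathbb{P}(1,1,2))\cong\ZZ$ is an integral (hence Weil) non-Cartier ample divisor whose associated polytope has a genuinely period-$2$ Ehrhart quasi-polynomial; the clean implication that does go through is the weaker ``$D_\P$ Cartier $\Rightarrow$ polynomial'', where $nD_\P$ is Cartier, ample, and has polynomial Hilbert function. Consequently the sufficiency half of \cref{conj:quasilattice} must rest on something the integral non-Cartier examples lack, and I would reformulate the crux as an explicitly \emph{local} statement at the maximal cones $\sigma$ on which $D_\P$ fails to be Cartier: the fractional support data $\{a_\rho\}_{\rho\subseteq\sigma}$ should contribute no periodic term to the $M$-graded Euler characteristic. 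Proving that this local vanishing is \emph{equivalent} to $D_\P$ being Weil --- rather than merely implied by it --- is the heart of the matter, and the phenomenon above suggests that in the generality stated the equivalence may need the additional geometric input present in \cref{thm:dpweil} and \cref{cor:twopolynomials} (rational singularities and the Gorenstein Fano setting) to hold.
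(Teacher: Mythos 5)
There is nothing in the paper to compare your argument against: \cref{conj:quasilattice} is exactly the statement the author could not prove (``Unfortunately, we were not able to state a proof of this claim''), so it stands in the paper as an open conjecture with no proof at all. Your proposal is not a proof either\,---\,you flag genuine gaps in both directions\,---\,but its central observation is correct and settles part of the question in the negative: the $\mathbb{P}(1,1,2)$ example is a valid counterexample to the implication ``$D_\P$ Weil $\Rightarrow$ $L_\P$ polynomial''. In the paper's own conventions: let $\P=\conv\{(0,0),(-1,0),(0,-\tfrac{1}{2})\}$. Its normal fan has primitive ray generators $u_1=(1,0)$, $u_2=(0,1)$, $u_3=(-1,-2)$, so $X_\P\simeq\mathbb{P}(1,1,2)$, and since
\[\P=\{x\in\RR^2 \mid \langle x,u_1\rangle\le 0,\ \langle x,u_2\rangle\le 0,\ \langle x,u_3\rangle\le 1\},\]
the associated divisor is $D_\P=D_3$, a prime (hence Weil) ample divisor; it is not Cartier, since an ample toric divisor is Cartier exactly when its polytope is a lattice polytope, and $\P$ has the vertex $(0,-\tfrac12)$. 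Nevertheless
\[L_\P(n)=\#\{(a,b)\in\ZZ_{\ge0}^2 \mid a+2b\le n\}=\begin{cases}\tfrac{n^2}{4}+n+1,& n \text{ even},\\ \tfrac{n^2}{4}+n+\tfrac{3}{4},& n \text{ odd},\end{cases}\]
which has minimal period $2$: any polynomial agreeing with $L_\P$ at all even integers must equal $\tfrac{n^2}{4}+n+1$ identically, and this fails at every odd $n$. So the sufficiency half of the conjecture is false as stated, and your concluding diagnosis is the right one: integrality of the coefficients of $D_\P$ is too weak, the implication that does hold is ``$D_\P$ Cartier $\Rightarrow$ $L_\P$ polynomial'', and any correct version of the conjecture must either strengthen the hypothesis or add the geometric input that $(X_\P,D_\P)$ actually arises as the limit of a toric degeneration, as in \cref{thm:dpweil} and \cref{cor:degenerationpolynomial}.

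What your attempt leaves untouched is the necessity direction, ``$L_\P$ polynomial $\Rightarrow$ $D_\P$ Weil'', which is unaffected by the counterexample and is now the only part of \cref{conj:quasilattice} that can still be true. Your sketch for it (contraposition, isolating the periodic defect of $\lfloor na_{\rho_0}\rfloor$ along a non-integral facet) is, as you yourself admit, not a proof: period collapse is a real phenomenon\,---\,the Ehrhart period can be strictly smaller than the denominator of the polytope\,---\,so the periodic contribution of a single fractional facet can a priori be cancelled by the arithmetic of other faces, and ruling this out for every non-Weil $D_\P$ is precisely the open content of the conjecture. Nothing in the paper resolves this either, so on that half you and the author are in the same position; but you should present your work not as a proof attempt that falls short, rather as a refutation of one direction of the conjecture together with an honest account of why the other direction remains open.
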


Unfortunately, we were not able to state a proof of this claim.

\subsection{Structure}

We will start our work by recalling the needed results from Ehrhart Theory and a proof of \cref{thm:toricehrhart} in \cref{sec:ehrhart}. The introduction and study of dual-integral polytopes will be done in \cref{sec:dual}. In \cref{sec:degen} we will formalize the concept of embedded toric degenerations and prove \cref{thm:dpweil,cor:twopolynomials} as well as reprove the known implication of \cref{thm:main}. \cref{sec:vanishing} is dedicated to the proof of \cref{thm:vanishing} which will allow us to prove \cref{thm:main,thm:reflexive} in \cref{sec:main}. We will finish this paper with a brief recap of the flag variety case from \cite{S2,S3}, proving \cref{thm:flag}. Especially, we will argue why the proof of \cref{thm:main} in the flag variety case\,---\,which has been presented in \cite{S2}\,---\,is easier than in the general case.

\subsection{Acknowledgments}

This paper is a rewritten excerpt from my PhD thesis \cite{S1} under the supervision of Peter Littelmann. I am very grateful to Peter Littelmann, Michel Brion and Xin Fang for many helpful discussions, for introducing me to the various mathematical concepts involved in this paper and for their invaluable, continued support. Additionally, I would like to thank my PhD referees Ghislain Fourier and Kiumars Kaveh for their kind and helpful comments.

\section{Preliminaries}

Let us first recall some basic facts on toric varieties. By a result of Sumihiro in \cite{Su}, every normal toric variety can be realized as the toric variety associated to a pointed rational polyhedral fan. Additionally, by \cite[Theorem 6.2.1]{CLS} a normal toric variety is projective if and only if the corresponding fan is the normal fan of a (full-dimensional) rational convex polytope. To any full-dimensional rational convex polytope $\P$ we associate a normal projective toric variety by $X_\P := X_{\Sigma_\P}$ via the normal fan $\Sigma_\P$ of $\P$.

Additionally, polytopes have a strong connection with torus invariant divisors. Let $X_\Sigma$ be a normal toric variety. Then there exist torus invariant prime divisors $D_\rho$ on $X_\Sigma$ for every ray $\rho\in\Sigma(1)$. These divisors generate the divisor class group of $X_\Sigma$ (see \cite[First Proposition of Section 3.4]{F}). So we can choose a torus invariant representative in every divisor class.

The canonical divisor has a particularly nice representative. By \cite[First Proposition of Section 4.3]{F} we have $K_{X_\Sigma} \sim -\sum_{\rho\in\Sigma(1)}D_\rho$.

Furthermore, divisors on toric varieties correspond to polytopes in the following way. 
We will always denote by $u_\rho$ the \textit{primitive ray generator} of the ray $\rho\in\Sigma(1)$, i.e. the unique lattice vector in $\rho$ whose coordinates are coprime.

Let $X_\Sigma$ be a normal toric variety and let $\P$ be a polytope such that $\Sigma$ is a refinement of the normal fan $\Sigma_\P$ of $\P$. We can always write $\P$ as
\[ \P = \{ x\in\RR^d \mid \langle x, u_\rho \rangle \leq b_\rho \text{ for all } \rho\in\Sigma(1)\},\]
where $b_\rho = \max_{x\in\P}\langle x,u_\rho\rangle\in\QQ$. Then we associate to $\P$ the torus invariant $\QQ$-Weil divisor $D_\P := \sum_{\rho\in\Sigma(1)} b_\rho D_\rho$.

Vice versa, to any torus invariant $\QQ$-Weil divisor $D = \sum_{\rho\in\Sigma(1)}a_\rho D_\rho$ we associate the (possibly empty) rational convex polytope 
\[ \P_D := \{ x\in\RR^d \mid \langle x, u_\rho \rangle \leq a_\rho \text{ for all } \rho\in\Sigma(1)\}.\]

The following observations are immediate.

\begin{prop}\label{prop:toricdivisors}
	Let $X_\Sigma$ be the normal toric variety corresponding to the pointed rational polyhedral fan $\Sigma\subseteq\RR^d$. Let $D$ be a torus invariant $\QQ$-Weil divisor on $X_\Sigma$ and let $\P$ be a rational convex polytope such that $\Sigma$ is a refinement of the normal fan $\Sigma_\P$.
	Then the following properties hold.
	\begin{enumerate}
		\item $\P_{D_\P} = \P$.
		\item $D_{\P_D} = D$ if $\Sigma_{\P_D} = \Sigma$.
		\item $\P_{kD} = k\P_D$ for any $k \in \RR_{\geq0}$.
		\item If $D \sim E$ for some torus invariant $\QQ$-Weil divisor $E$, we have $\P_E = \P_D + s$ for some $s \in\ZZ^d$.
	\end{enumerate}
\end{prop}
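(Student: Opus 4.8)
The plan is to verify each of the four identities directly from the defining inequality descriptions, treating (i) and (iii) as pure substitutions, (ii) as the one point where the hypothesis $\Sigma_{\P_D}=\Sigma$ genuinely enters, and (iv) as the only statement requiring an actual input from toric geometry, namely the shape of a principal torus invariant divisor.

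For (i) I would start from the observation that, because $\Sigma$ refines $\Sigma_\P$, every ray of $\Sigma_\P$ is already a ray of $\Sigma$ (a $1$-dimensional cone cannot be subdivided further), so the facet-defining inequalities of $\P$ occur among the inequalities $\langle x,u_\rho\rangle\le b_\rho$, $\rho\in\Sigma(1)$. Each remaining inequality, coming from a ray $\rho$ that is not a facet normal of $\P$, is valid on all of $\P$ by the very definition $b_\rho=\max_{x\in\P}\langle x,u_\rho\rangle$, hence redundant. Thus the intersection of all these half-spaces is exactly $\P$, which is the assertion $\P_{D_\P}=\P$. Statement (iii) is even more immediate: for $k>0$ the change of variables $x=ky$ turns $\langle x,u_\rho\rangle\le ka_\rho$ into $\langle y,u_\rho\rangle\le a_\rho$, giving $\P_{kD}=k\P_D$; the case $k=0$ reduces to noting that $\P_0=\{x\mid\langle x,u_\rho\rangle\le 0 \ \forall\rho\}=\{0\}=0\cdot\P_D$, where the collapse to the origin uses that the rays of the (complete) fan $\Sigma$ positively span $\RR^d$.

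The content of (ii) is the converse direction, and here the hypothesis is essential. Writing $D=\sum_\rho a_\rho D_\rho$, we have $D_{\P_D}=\sum_\rho b_\rho D_\rho$ with $b_\rho=\max_{x\in\P_D}\langle x,u_\rho\rangle$, so I must show $b_\rho=a_\rho$ for every $\rho\in\Sigma(1)$. The inequality $b_\rho\le a_\rho$ is automatic from the definition of $\P_D$. For equality I would argue contrapositively: if $b_\rho<a_\rho$ then the half-space $\langle x,u_\rho\rangle\le a_\rho$ is not active on $\P_D$, so $u_\rho$ is not an outer facet normal and $\rho$ fails to be a ray of the normal fan $\Sigma_{\P_D}$, contradicting $\Sigma_{\P_D}=\Sigma$. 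This is exactly the subtlety the assumption rules out (redundant or oversized coefficients $a_\rho$), and it is the one place where one cannot merely substitute.

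Finally, for (iv) I would invoke the standard description of principal divisors on a toric variety: two torus invariant $\QQ$-Weil divisors are linearly equivalent exactly when their difference is $\div(\chi^m)=\sum_\rho\langle m,u_\rho\rangle D_\rho$ for some lattice vector $m\in\ZZ^d$ (see \cite{F}). Writing $E=\sum_\rho e_\rho D_\rho$, linear equivalence $D\sim E$ then gives $e_\rho=a_\rho-\langle m,u_\rho\rangle$, so
\[\P_E=\{x\mid \langle x+m,u_\rho\rangle\le a_\rho \ \forall\rho\}=\P_D-m,\]
and we may take $s=-m\in\ZZ^d$. The only things to watch are the sign convention and the fact that $m$ is genuinely a lattice vector, which is what makes $s$ integral. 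I expect (ii) to be the main, indeed the only real, obstacle; everything else is bookkeeping once the facet presentation of $\P$ and the character-divisor formula are in hand.
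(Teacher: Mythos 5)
Your proof is correct. The paper offers no argument for this proposition at all---it is introduced with ``The following observations are immediate''---and your direct verification from the inequality descriptions (including the facet-versus-redundant-inequality point for (ii), which is the only genuinely non-trivial step, and the formula $\div(\chi^m)=\sum_{\rho}\langle m,u_\rho\rangle D_\rho$ for (iv)) is precisely the routine argument the paper leaves to the reader.
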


One divisor on projective normal varieties behave particularly well (see \cite[Propositions 4.2.10 and 6.1.10]{CLS}).

\begin{prop}\label{prop:dpample}
	Let $X_\P$ be the normal projective toric variety associated to the rational convex polytope $\P$. Then the divisor $D_\P$ is $\QQ$-Cartier and ample.
\end{prop}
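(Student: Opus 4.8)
The plan is to reduce both assertions to the standard toric dictionary relating $\QQ$-Cartier ample divisors and strictly convex support functions, exploiting that the fan $\Sigma_\P$ is by construction the normal fan of $\P$. Recall that the maximal (i.e. $d$-dimensional) cones of $\Sigma_\P$ are precisely the normal cones $\sigma_v$ at the vertices $v \in \vert(\P)$, where $\sigma_v$ is generated by those $u_\rho$ whose associated facet contains $v$. This vertex–cone correspondence is the combinatorial backbone of the whole argument.

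For the $\QQ$-Cartier claim I would first recall that a torus invariant $\QQ$-Weil divisor $\sum_\rho b_\rho D_\rho$ is $\QQ$-Cartier exactly when, for every maximal cone $\sigma$, there is a rational character $m_\sigma \in \QQ^d$ with $\langle m_\sigma, u_\rho\rangle = b_\rho$ for all rays $\rho$ of $\sigma$. For the cone $\sigma_v$ the natural candidate is $m_{\sigma_v} := v$ itself: if $\rho \in \sigma_v(1)$ then $v$ lies on the facet on which $\langle\,\cdot\,, u_\rho\rangle$ attains its maximum $b_\rho$ over $\P$, so $\langle v, u_\rho\rangle = b_\rho$. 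Since every vertex of a rational polytope is rational, $v \in \QQ^d$, and the local data $\{v : v \in \vert(\P)\}$ witness that $D_\P$ is $\QQ$-Cartier.

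For ampleness I would assemble these local data into the support function $\varphi_{D_\P}$, which on each cone $\sigma_v$ is given by $\langle v, \,\cdot\,\rangle$; since $v$ is exactly the point of $\P$ maximizing $\langle\,\cdot\,, u\rangle$ for $u \in \sigma_v$, this globally recovers the support function $h_\P(u) = \max_{x\in\P}\langle x, u\rangle$ of the polytope. The toric criterion for ampleness on a complete variety states that a $\QQ$-Cartier divisor is ample if and only if its support function is strictly convex with respect to $\Sigma$; equivalently, $\Sigma$ must coincide with the normal fan $\Sigma_{\P_{D_\P}}$ of the polytope it defines. By the first property of \cref{prop:toricdivisors} we have $\P_{D_\P} = \P$, and $\Sigma_\P$ is the normal fan of $\P$ by definition, so the criterion is satisfied and $D_\P$ is ample. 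To stay entirely within the integral (Cartier) setting where this criterion is usually phrased, I would pass to a positive integer $k$ for which $k\P$ is a lattice polytope; then $kD_\P = D_{k\P}$ is an honest Cartier divisor by the scaling property of \cref{prop:toricdivisors}, ampleness of $kD_\P$ transfers back to $D_\P$, and $\QQ$-Cartier follows a fortiori.

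The main obstacle is the strict convexity step: one must verify that the linear functionals $\langle v, \,\cdot\,\rangle$ over adjacent maximal cones are genuinely distinct, so that $h_\P$ bends strictly across every wall. This is precisely where full-dimensionality of $\P$ is indispensable\,---\,distinct vertices of a full-dimensional polytope induce distinct supporting functionals\,---\,and it is the only point where the argument uses more than the bare definitions. Everything else is a direct unwinding of the normal-fan construction.
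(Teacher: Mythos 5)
Your proof is correct. Note that the paper offers no argument of its own for this proposition: it is stated with a citation to \cite[Propositions 4.2.10 and 6.1.10]{CLS}, which are precisely the two facts you establish\,---\,Cartier data given by the vertices of the polytope, and ampleness via strict convexity of the support function\,---\,stated there for full-dimensional \emph{lattice} polytopes. So yours is not a genuinely different route but a self-contained unwinding of the cited standard results, with one real added value: you make explicit the reduction from rational to lattice polytopes (choosing $k$ so that $k\P$ is a lattice polytope, whence $kD_\P = D_{k\P}$ is Cartier and ample, and therefore $D_\P$ is $\QQ$-Cartier and ample), a step the paper's citation leaves implicit. One small correction to your closing remark: the claim that full-dimensionality is what makes distinct vertices yield distinct linear functionals is misplaced, since distinct points of $\RR^d$ always induce distinct functionals. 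Full-dimensionality is needed earlier and more globally: it guarantees that $\Sigma_\P$ is a pointed complete fan whose maximal cones are exactly the vertex normal cones, i.e., that $X_\P$ and the vertex--cone dictionary underlying both of your steps are defined at all (indeed the paper only associates $X_\P$ to full-dimensional polytopes).
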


To go from $\QQ$-Weil divisors to Weil divisors, we will need a rounding operation.

\begin{defi}
	Let $X_\Sigma$ be a normal toric variety and $D = \sum_{\rho\in\Sigma(1)}$ a $\QQ$-Weil divisor on $X_\Sigma$. The \textbf{round-down} of $D$ is defined as $\lfloor D \rfloor := \sum_{\rho\in\Sigma(1)} \lfloor a_\rho \rfloor D_\rho$. The \textbf{round-up} $\lceil D \rceil$ is defined analogously.
\end{defi}

For easier notation later-on we will define a similar operation for polytopes.

\begin{defi}
	Let $\P$ be a rational convex polytope with normal fan $\Sigma_\P$. Let $u_\rho$ denote the primitive ray generators of $\Sigma_\P$. Then there exist unique rational coefficients $b_\rho\in\QQ$ such that
	\[\P = \{ x\in\RR^d \mid \langle x, u_\rho \rangle \leq b_\rho \text{ for all } \rho\in\Sigma_\P(1)\}. \]
	The \textbf{round-down} of $\P$ is defined as
	\[ \P = \{ x\in\RR^d \mid \langle x, u_\rho \rangle \leq \lfloor b_\rho \rfloor \text{ for all } \rho\in\Sigma_\P(1)\}. \]
	The \textbf{round-up} of $\P$ is defined analogously.
\end{defi}

The following connection is clear.

\begin{prop}\label{prop:toricdivisors2}
	Let $X_\Sigma$ be a normal toric variety and $D$ a torus invariant $\QQ$-Weil divisor on $X_\Sigma$. Then $\lfloor \P_D\rfloor = \P_{\lfloor D \rfloor}$ and $\lceil \P_D\rceil = \P_{\lceil D \rceil}$.
\end{prop}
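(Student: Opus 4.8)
The plan is to compare the two polytopes through their half-space descriptions relative to the fan $\Sigma$. Writing $D = \sum_{\rho\in\Sigma(1)} a_\rho D_\rho$, the round-down divisor is $\lfloor D\rfloor = \sum_{\rho\in\Sigma(1)} \lfloor a_\rho\rfloor D_\rho$, so by definition
\[ \P_{\lfloor D\rfloor} = \{ x\in\RR^d \mid \langle x, u_\rho\rangle \le \lfloor a_\rho\rfloor \text{ for all } \rho\in\Sigma(1)\}. \]
Thus everything reduces to showing that $\lfloor\P_D\rfloor$ is cut out by exactly these same inequalities, and the round-up statement is the verbatim analogue with $\lfloor\,\cdot\,\rfloor$ replaced by $\lceil\,\cdot\,\rceil$.

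First I would unwind the definition of $\lfloor\P_D\rfloor$, which rounds the unique tight coefficients $b_\rho = \max_{x\in\P_D}\langle x, u_\rho\rangle$ down to $\lfloor b_\rho\rfloor$. The key identity is that $b_\rho = a_\rho$ for every ray $\rho$ whose defining inequality is active on $\P_D$: the supporting hyperplane $\langle\,\cdot\,,u_\rho\rangle = a_\rho$ then meets $\P_D$, so the maximum is attained and equals $a_\rho$. For these rays $\lfloor b_\rho\rfloor = \lfloor a_\rho\rfloor$, so the active facet inequalities of $\lfloor\P_D\rfloor$ and $\P_{\lfloor D\rfloor}$ coincide, and the desired equality follows by a direct definition chase.

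The main obstacle is the bookkeeping of the rays $\rho\in\Sigma(1)$ that are not facet-defining for $\P_D$, where one only has $b_\rho \le a_\rho$ with possible strict inequality. Here the round-down operation is genuinely sensitive to the chosen fan: rounding the intrinsic normal-fan description of $\P_D$ need not produce the same polytope as rounding its description relative to the finer fan $\Sigma$, so one must be careful to perform both operations over the common fan $\Sigma$. The equality is then forced coefficient-by-coefficient once one knows $\lfloor b_\rho\rfloor = \lfloor a_\rho\rfloor$ for all $\rho$, which is exactly the situation $b_\rho = a_\rho$, i.e. $\Sigma = \Sigma_{\P_D}$ (equivalently $D = D_{\P_D}$, cf.\ the second statement of \cref{prop:toricdivisors}). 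This is the polarized setting the paper actually uses, where $D = D_\P$ is ample and $\Sigma = \Sigma_\P$; I expect confirming that the non-facet rays never impose a sharper constraint after rounding to be the only point requiring an argument rather than a direct appeal to the definitions.
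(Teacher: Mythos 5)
Your reduction to a coefficient-by-coefficient comparison is exactly the definition chase the paper intends (it offers no proof beyond calling the statement clear), and your restricted claim is correct: if every ray of $\Sigma$ cuts out a facet of $\P_D$, then $\Sigma_{\P_D}(1)=\Sigma(1)$, the tight coefficients satisfy $b_\rho=a_\rho$, hence $\lfloor b_\rho\rfloor=\lfloor a_\rho\rfloor$ for all rays, and both $\lfloor\P_D\rfloor$ and $\P_{\lfloor D\rfloor}$ are carved out by identical inequalities. That is also the only situation in which the paper ever invokes the proposition: in the proof of \cref{thm:toricehrhart} one has $D=nD_\P$ on $X_{\Sigma_\P}$, so $\Sigma=\Sigma_{\P_D}$ and all rays are facet-defining.

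The genuine gap is your closing expectation. It is not a pending verification but a false statement that the non-facet rays never impose a sharper constraint after rounding; consequently the proposition in its stated generality (arbitrary fan $\Sigma$, arbitrary torus invariant $\QQ$-Weil divisor $D$) is false, and no argument can close this step. Take $\Sigma$ to be the complete fan in $\RR^2$ with rays $\rho_1,\dots,\rho_4$ generated by $u_1=(1,1)$, $u_2=(1,-1)$, $u_3=(-1,0)$, $u_4=(1,0)$, and let
\[ D=\tfrac{3}{2}\,D_{\rho_1}+\tfrac{2}{5}\,D_{\rho_2}+0\cdot D_{\rho_3}+\tfrac{19}{20}\,D_{\rho_4}. \]
Then $\P_D$ is the triangle with vertices $(0,\tfrac{3}{2})$, $(0,-\tfrac{2}{5})$, $(\tfrac{19}{20},\tfrac{11}{20})$: the inequality $\langle x,u_4\rangle\le\tfrac{19}{20}$ is attained at the last vertex but cuts out no facet, so $\Sigma_{\P_D}(1)=\{\rho_1,\rho_2,\rho_3\}$. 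By the paper's definition, $\lfloor\P_D\rfloor=\{x\in\RR^2\mid x_1+x_2\le 1,\ x_1-x_2\le 0,\ x_1\ge 0\}$ is the triangle with vertices $(0,0)$, $(0,1)$, $(\tfrac{1}{2},\tfrac{1}{2})$, whereas $\P_{\lfloor D\rfloor}$ carries the additional inequality $x_1\le\lfloor\tfrac{19}{20}\rfloor=0$ and is the segment $\{0\}\times[0,1]$; the two differ, e.g.\ at $(\tfrac{1}{2},\tfrac{1}{2})$. Note that this example also refutes your parenthetical equivalence: here $b_\rho=a_\rho$ for all four rays (the fourth maximum is attained, only at a vertex), yet $\Sigma\ne\Sigma_{\P_D}$ and the conclusion fails, so tightness of all coefficients is strictly weaker than the hypothesis you actually need, namely $\Sigma(1)=\Sigma_{\P_D}(1)$. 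With that hypothesis added the proposition is correct, your argument proves it, and nothing in the paper is lost, since only this case is used. Finally, observe that $\lfloor\P_D\rfloor$ and $\P_{\lfloor D\rfloor}$ always contain the same lattice points (both sets of rounded inequalities select exactly $\P_D\cap\ZZ^2$), which is why the downstream lattice-point counts in \cref{thm:toricehrhart} are insensitive to this issue even though the polytopes themselves can differ.
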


The connection between divisors and polytopes gives us possibilities to answer questions about cohomology. As an example we will state \cite[Proposition 4.3.3]{CLS}.

\begin{prop}\label{prop:toricehrhartweil}
	Let $X_\Sigma$ be the normal toric variety and $D$ a torus invariant Weil divisor on $X_\Sigma$. Then 
	\[ h^0(X_\Sigma, \O_{X_{\Sigma}}(D)) = \#(\P_D\cap\ZZ^d). \]
\end{prop}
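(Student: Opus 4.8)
The plan is to exploit the torus action on the space of global sections and decompose it into one-dimensional weight spaces indexed by characters. Recall that for the Weil divisor $D = \sum_{\rho\in\Sigma(1)} a_\rho D_\rho$, the sheaf $\O_{X_\Sigma}(D)$ is the subsheaf of the sheaf of rational functions whose sections over an open set $U$ are those rational functions $f$ satisfying $(\div(f) + D)|_U \geq 0$; in particular a global section is a rational function $f$ on $X_\Sigma$ with $\div(f) + D \geq 0$. Since $X_\Sigma$ is normal, this divisorial description is faithful and suffices to identify the global sections.

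First I would use that $X_\Sigma$ contains a dense torus $T$ whose action on $H^0(X_\Sigma, \O_{X_\Sigma}(D))$ yields a weight space decomposition. Every rational function on $X_\Sigma$ is a $\CC$-linear combination of characters $\chi^m$ with $m\in\ZZ^d$, and a $T$-semiinvariant rational function of weight $m$ is necessarily a scalar multiple of $\chi^m$; hence each weight space of $H^0$ is at most one-dimensional and is nonzero exactly when $\chi^m$ itself is a global section. This reduces the whole problem to determining for which $m\in\ZZ^d$ the character $\chi^m$ lies in $H^0(X_\Sigma, \O_{X_\Sigma}(D))$.

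The key computation is the divisor of a character, $\div(\chi^m) = \sum_{\rho\in\Sigma(1)} \langle m, u_\rho\rangle D_\rho$, which follows from the order of vanishing of $\chi^m$ along the torus invariant prime divisor $D_\rho$ being $\langle m, u_\rho\rangle$. Combining this with the description of global sections, $\chi^m$ is a global section if and only if $\langle m, u_\rho\rangle + a_\rho \geq 0$ for every $\rho\in\Sigma(1)$. Comparing with the defining inequalities of $\P_D$\,---\,and using that the lattice point count is invariant under $m\mapsto -m$, so that the sign convention in the definition of $\P_D$ is immaterial\,---\,this condition is exactly membership of a lattice point in $\P_D$. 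Summing the dimensions of the nonzero weight spaces then gives $h^0(X_\Sigma, \O_{X_\Sigma}(D)) = \#(\P_D\cap\ZZ^d)$.

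I expect the main obstacle to be the careful justification of the weight space decomposition, specifically the claim that each weight space is at most one-dimensional. This rests on the fact that the rational function field of $X_\Sigma$ is the fraction field of the semigroup algebra of characters, so that two $T$-semiinvariants of the same weight differ by a scalar. Making this precise requires the normality of $X_\Sigma$ together with the explicit local description of $\O_{X_\Sigma}(D)$ on the affine charts $U_\sigma$ associated to the maximal cones $\sigma\in\Sigma$, where one verifies that the local sections are spanned by the characters lying in the appropriate translated dual cone and then glues these local pictures into the global count.
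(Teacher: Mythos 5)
Your proof is correct in substance and is essentially the argument the paper relies on: the paper does not prove this proposition itself but quotes it as \cite[Proposition 4.3.3]{CLS}, and the proof there is exactly your weight-space decomposition, with the characters $\chi^m$ that are global sections indexed (after the harmless sign flip $m\mapsto -m$ that you correctly point out, needed because of the paper's convention $\P_D=\{x\mid\langle x,u_\rho\rangle\leq a_\rho\}$) by the lattice points of $\P_D$. One intermediate claim needs repair: it is false that every rational function on $X_\Sigma$ is a $\CC$-linear combination of characters $\chi^m$\,---\,the function field is the \emph{fraction} field of the Laurent polynomial ring, as you yourself say in your last paragraph. What the argument actually requires is the weaker, true statement that every \emph{global section} of $\O_{X_\Sigma}(D)$ is such a combination: since $D$ is supported on the torus-invariant boundary $X_\Sigma\setminus T$, the condition $\div(f)+D\geq 0$ forces $f$ to be regular on the dense torus $T$, hence $f\in\CC[\ZZ^d]$, which also gives the rationality of the $T$-representation $H^0(X_\Sigma,\O_{X_\Sigma}(D))$ needed for the weight decomposition. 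With that substitution, your reduction to deciding when $\chi^m$ is a section, the computation $\div(\chi^m)=\sum_{\rho\in\Sigma(1)}\langle m,u_\rho\rangle D_\rho$, and the comparison with the inequalities defining $\P_D$ go through verbatim.
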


This standard fact will motivate the next section.

\section{Ehrhart Theory}\label{sec:ehrhart}

Given a subset $S\subseteq\RR^d$ and a positive integer $n$, one might be interested in the number of lattice points in the dilation $nS$\,---\,i.e. the cardinality of $nS\cap\ZZ^d$. It turns out that there lies a beautiful theory behind this simple question if one starts with a convex rational polytope\,---\,called {\em Ehrhart Theory}. A well-written introduction into this theory is given in \cite{BR}. The main object of this theory is the following.

\begin{nota}\label{defi:ehrhart}
	Let $S \subseteq\RR^d$ be an arbitrary subset. Then for every integer $n\in\NN$ we will denote the number of lattice points in the $n$-th dilation of $S$ by $L_S(n)$, i.e.
	\[L_S(n) := \#(nS\cap\ZZ^d).\]
\end{nota} 

To formulate the birth result in this theory\,---\,called {\em Ehrhart-Macdonald Reciprocity}\,---\,, we need the following definition.

\begin{defi}
	A \textbf{quasi-polynomial} over $\RR$ is a function $f\colon\RR\to\RR$ that can be written as
	\[ f(x) = a_d(x) x^d + a_{d-1} (x)x^{d-1} + \ldots + a_1(x)x + a_0(x)  \]
	for some periodic functions $a_0, \ldots, a_d$ with integral period and $a_d \not\equiv 0$.  We call $d$ the \textbf{degree} of $f$.
\end{defi}

\begin{rem}
	An equivalent definition would be that a function ${f\colon\RR\to\RR}$ is called a quasi-polynomial if there exists an integer $T$ and polynomials \sloppy{$f_1, \ldots, f_T \in\RR[x]$} such that 
	\[  f(n) = f_{i}(n) \hspace{10pt}\text{ if } i\equiv n \mod T \]
	for every integer $n\in\NN$.
\end{rem}

We can now state a beautiful result, found for example in \cite[Theorem 4.1]{BR}.

\begin{theorem}[Ehrhart-Macdonald Reciprocity]\label{thm:emr}
	Let $\P$ be a rational convex polytope. Then there exists a quasi-polynomial $l_\P$ of degree $\dim\P$\,---\,called the \textbf{Ehrhart quasi-polynomial}\,---\,such that
	\[l_p(n) = L_\P(n) \text{ for all } n \in\NN.\]
	Any such quasi-polynomial $l_\P$ fulfills 
	\[ l_\P(-n) = (-1)^{\dim\P}L_{\interior \P}(n) \text{ for all } n\in\NN. \]
	Furthermore, if $\P$ is a lattice polytope, $l_\P$ can be chosen to be a rational polynomial.
\end{theorem}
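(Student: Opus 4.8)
The plan is to pass from the counting functions $L_\P(n)$ to their generating function and to read off both the quasi-polynomiality and the reciprocity from its shape. Concretely, I would embed $\RR^d\hookrightarrow\RR^{d+1}$ at height one via $x\mapsto(x,1)$ and form the cone $C=\mathrm{cone}(\P\times\{1\})$ over $\P$. A lattice point of $C$ lying at height $n$ is exactly a lattice point of $n\P$, so if we grade $C$ by the last coordinate the integer-point generating function
\[ \sigma_C(z)=\sum_{(m,n)\in C\cap\ZZ^{d+1}} z^{n} \]
encodes all the values $L_\P(n)$ as the coefficient of $z^n$. The entire statement then becomes a statement about the rational function $\sigma_C$.

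To control $\sigma_C$ I would first triangulate $\P$ into rational simplices and use inclusion--exclusion (or, more cleanly, a half-open decomposition) to reduce to the case of a single simplex $S$ with vertices $v_0,\dots,v_d$. Then $C$ is a simplicial cone, generated by the primitive integer vectors $w_i=q_i(v_i,1)$, where $q_i$ is the denominator of $v_i$. Writing each lattice point of $C$ uniquely as a point of the half-open fundamental parallelepiped $\Pi$ plus a nonnegative integer combination of the $w_i$ gives
\[ \sigma_C(z)=\frac{\sum_{p\in\Pi\cap\ZZ^{d+1}} z^{\deg p}}{\prod_{i=0}^{d}\left(1-z^{q_i}\right)}, \]
where $\deg$ records the height. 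A partial-fraction expansion shows that the coefficient of $z^n$ agrees, for $n\in\NN$, with a quasi-polynomial of degree $d$ whose period divides $\mathrm{lcm}(q_i)$; the leading coefficient is the normalized volume, hence nonzero, so the degree is exactly $\dim\P$. Reassembling the triangulation yields the Ehrhart quasi-polynomial $l_\P$, which is uniquely determined as a function by its values on $\NN$.

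The heart of the argument, and the step I expect to be the main obstacle, is the reciprocity. For a pointed rational cone $C$ of dimension $e$ one has Stanley's identity $\sigma_C(1/z)=(-1)^{e}\sigma_{\interior C}(z)$ as rational functions, where $\sigma_{\interior C}$ counts points in the relative interior. For a single simplicial cone this is immediate from the parallelepiped formula, since the interior points are governed by the opposite half-open parallelepiped $\{\sum\mu_i w_i: 0<\mu_i\le 1\}$, and the substitution $z\mapsto 1/z$ both produces the sign $(-1)^{e}$ and interchanges the two parallelepipeds. Here $e=\dim\P+1$ and $\sigma_{\interior C}(z)=\sum_{n\ge 1}L_{\interior\P}(n)z^{n}$, since interior points occur only at height $\ge1$. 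The genuinely delicate point is to combine the per-simplex reciprocity over a triangulation: the inclusion--exclusion that assembles $L_\P$ from the pieces does not obviously match the one that assembles $L_{\interior\P}$, and the shared lower-dimensional faces must be accounted for exactly once. The standard remedy is to use half-open decompositions of $\P$, so that every lattice point is counted with multiplicity one and the closed and open bookkeeping become compatible.

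Granting the cone reciprocity, I would finish as follows. The general principle that a rational generating function $\sum_{n\ge0}f(n)z^{n}=P(z)/Q(z)$ with $f$ a quasi-polynomial satisfies $P(1/z)/Q(1/z)=-\sum_{n\ge1}f(-n)z^{n}$, applied to $\sigma_C$, together with Stanley's identity, gives
\[ -\sum_{n\ge1}l_\P(-n)z^{n}=(-1)^{\dim\P+1}\sum_{n\ge1}L_{\interior\P}(n)z^{n}, \]
and comparing coefficients yields $l_\P(-n)=(-1)^{\dim\P}L_{\interior\P}(n)$ for all $n\in\NN$. Finally, if $\P$ is a lattice polytope then its vertices are integral, so every $q_i=1$, every denominator factor equals $1-z$, and $\sigma_C(z)=h(z)/(1-z)^{\dim\P+1}$; its coefficients form an honest polynomial in $n$, so the period is $1$ and $l_\P$ can be chosen to be a genuine rational polynomial.
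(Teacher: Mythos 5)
The paper does not prove this statement at all --- it quotes it as a classical result, citing \cite[Theorem 4.1]{BR} --- and your argument is precisely the standard proof from that reference: cone over $\P$ at height one, triangulate into simplicial cones, obtain quasi-polynomiality of degree $\dim\P$ from the fundamental-parallelepiped generating function, and derive reciprocity by combining Stanley's cone reciprocity with the $z\mapsto 1/z$ principle for rational generating functions of quasi-polynomials. Your proposal is correct, including your identification of the only delicate point (making the per-simplex reciprocity compatible across shared faces of the triangulation), which you fix with half-open decompositions while \cite{BR} uses the equivalent device of irrational shifts; so this is essentially the same proof as the one the paper relies on.
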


\begin{rem}
	Since the period of a polynomial is not quite unique and quasi-polynomials are not uniquely determined by their values on integers, for a given polytope $\P$ we have many different quasi-polynomials $l_\P$ fulfilling Ehrhart-Macdo\-nald Reciprocity. Thankfully, we are generally only interested in evaluating those polynomials on integers\,---\,and these values are unique. So we will just call any quasi-polynomial {\em the} Ehrhart quasi-polynomial of $\P$ if it fulfills Ehrhart-Macdo\-nald Reciprocity and among all those quasi-polynomials there exists none of strictly smaller period. To simplify notation we will denote this quasi-polynomial by $L_\P$ too.
\end{rem}

\begin{ex}
	Let $\P = [0, 1/2] \subset \RR$. Then $L_\P(n) = \lfloor\frac{n}{2}\rfloor$, so one could choose for example
	\[ l_\P(x) = \frac{1}{2}x + 1-\frac{1}{2}\sin^2\left(\frac{\pi x}{2}\right) \text{ or }l_\P(x) = \frac{1}{2}x + \frac{3+\cos\pi x}{4}.\]
\end{ex}

We have seen that every lattice polytope will have an Ehrhart polynomial but there are also non-lattice convex polytopes whose Ehrhart quasi-polynomial is a polynomial. So the following category of polytopes should be quite interesting.

\begin{defi}
	A \textbf{quasi-lattice polytope} is a rational convex polytope whose Ehrhart quasi-polynomial is a polynomial.
\end{defi}

Examples of quasi-lattice polytopes can be found for example in \cite[Section 5]{S2}.

We will conclude this overview by proving the following new observation.

\begin{theorem*}[\cref{thm:toricehrhart}]
	Let $\P\subseteq\RR^d$ be a full-dimensional rational convex polytope. Let $X_\P$ denote the associated normal projective toric variety and $D_\P$ the associated torus invariant $\QQ$-Weil divisor. Then
	\[\#(n\P\cap\ZZ^d) = \chi(X_\P, \O_{X_\P}(\lfloor nD_\P\rfloor))\]
	and	
	\[\#(\interior n\P \cap \ZZ^d) = \chi(X, \O_{X_\P}(\lceil nD_\P\rceil + K_{X_\P}))\]
	for all $n \in \NN$.
\end{theorem*}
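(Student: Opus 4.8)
The plan is to reduce both equalities to the known statement in \cref{prop:toricehrhartweil}, which relates the number of lattice points of the polytope $\P_D$ to $h^0$ of an \emph{integral} Weil divisor $D$, by exploiting the vanishing of higher cohomology for ample divisors on toric varieties. First I would observe that $nD_\P = D_{n\P}$ by \cref{prop:toricdivisors}(iii), and hence $\lfloor nD_\P\rfloor = \lfloor D_{n\P}\rfloor = D_{\lfloor n\P\rfloor}$ by \cref{prop:toricdivisors2}. The round-down $\lfloor nD_\P\rfloor$ is now a genuine torus invariant Weil divisor, so \cref{prop:toricehrhartweil} applies and gives
\[ h^0(X_\P, \O_{X_\P}(\lfloor nD_\P\rfloor)) = \#(\P_{\lfloor nD_\P\rfloor}\cap\ZZ^d) = \#(\lfloor n\P\rfloor\cap\ZZ^d). \]
The key combinatorial point is that rounding down the polytope does not change which lattice points it contains: a lattice point $m\in\ZZ^d$ satisfies $\langle m,u_\rho\rangle\leq b_\rho$ if and only if it satisfies the integer inequality $\langle m,u_\rho\rangle\leq\lfloor b_\rho\rfloor$, because $\langle m,u_\rho\rangle\in\ZZ$. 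Therefore $\#(\lfloor n\P\rfloor\cap\ZZ^d)=\#(n\P\cap\ZZ^d)$, giving the left-hand side of the first identity at the level of $h^0$.

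The crucial remaining step for the first equality is to upgrade $h^0$ to the full Euler characteristic $\chi$, i.e. to show that all higher cohomology of $\O_{X_\P}(\lfloor nD_\P\rfloor)$ vanishes. Here I would invoke the ampleness of $D_\P$ from \cref{prop:dpample} together with toric vanishing theorems. Since $D_\P$ is ample, $nD_\P$ is ample, and the round-down of an ample divisor on a complete toric variety has vanishing higher cohomology by the standard Demazure-type vanishing for toric varieties (for instance via the description of $H^i$ in terms of the reduced cohomology of polytopal subcomplexes, \cite[Theorem 9.1.3 and Theorem 9.2.3]{CLS}); the basepoint-freeness and convexity coming from ampleness force $H^i(X_\P,\O_{X_\P}(\lfloor nD_\P\rfloor))=0$ for all $i>0$. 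Consequently $\chi(X_\P,\O_{X_\P}(\lfloor nD_\P\rfloor))=h^0=\#(n\P\cap\ZZ^d)$, completing the first formula.

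For the second formula I would combine the first with toric Serre duality. Serre duality on the (Cohen--Macaulay) toric variety $X_\P$ gives
\[ \chi(X_\P,\O_{X_\P}(\lceil nD_\P\rceil+K_{X_\P})) = (-1)^d\,\chi(X_\P,\O_{X_\P}(-\lceil nD_\P\rceil)), \]
since $K_{X_\P}$ is the dualizing sheaf and dimension is $d$. Now $-\lceil nD_\P\rceil=\lfloor -nD_\P\rfloor=\lfloor D_{-n\P}\rfloor$, so applying \cref{prop:toricehrhartweil} and the same rounding-insensitivity of lattice-point counts as above yields $h^0=\#((-n\P)\cap\ZZ^d)$; after checking higher vanishing this equals $\chi$, and Ehrhart--Macdonald Reciprocity (\cref{thm:emr}) converts $(-1)^d L_\P(-n)$ into $L_{\interior\P}(n)=\#(\interior n\P\cap\ZZ^d)$. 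I expect the main obstacle to be the careful handling of the interior/boundary bookkeeping under rounding in the second identity: unlike the closed case, one must verify that $\interior(n\P)\cap\ZZ^d$ corresponds exactly to the strict inequalities $\langle m,u_\rho\rangle<b_\rho$, equivalently $\langle m,u_\rho\rangle\leq\lceil b_\rho\rceil-1$, and match this against the divisor $\lceil nD_\P\rceil+K_{X_\P}$; the sign and the $+K_{X_\P}$ shift must align with the duality statement, and getting the round-up versus round-down conventions consistent throughout is the delicate part.
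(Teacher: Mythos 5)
Your treatment of the first identity is correct and coincides with the paper's own proof: ampleness of $D_\P$ (\cref{prop:dpample}), Demazure vanishing \cite[Theorem 9.3.5]{CLS} for the round-down of the ample $\QQ$-Cartier divisor $nD_\P$, and then \cref{prop:toricehrhartweil} combined with the fact that rounding down the facet inequalities does not change which lattice points are contained (\cref{prop:round2}).

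Your second identity, however, contains a genuine gap\,---\,in fact two false steps. First, the identification of $\lfloor -nD_\P\rfloor$ with $\lfloor D_{-n\P}\rfloor$ and of its section count with $\#((-n\P)\cap\ZZ^d)$ is wrong: \cref{prop:toricdivisors}(iii) holds only for \emph{non-negative} scalars, and for $n\geq 1$ the polytope $\P_{-nD_\P}=\{x\in\RR^d\mid\langle x,u_\rho\rangle\leq -nb_\rho \text{ for all }\rho\}$ (with $b_\rho$ the facet constants of $\P$) is actually \emph{empty}: a point $y$ in it together with an interior point $x$ of $n\P$ would give a vector $-y-x$ pairing strictly positively with every $u_\rho$, impossible since the $u_\rho$ positively span $\RR^d$. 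Moreover $-n\P$ is the reflection of $n\P$, whose lattice-point count is $L_\P(n)$; the quantity you need, $L_\P(-n)$, is a quasi-polynomial evaluation and is not the lattice-point count of any actual polytope. Second, and more fundamentally, the higher vanishing you defer to "checking" is false: $\lfloor -nD_\P\rfloor$ is the round-down of minus an ample divisor, and its top cohomology $H^d$ is nonzero precisely when $\interior n\P$ contains a lattice point\,---\,that is exactly what Serre duality asserts\,---\,so here $\chi\neq h^0=0$. Had both of your steps been valid, your chain would output $0$ rather than $\#(\interior n\P\cap\ZZ^d)$. Your Serre-duality route can be repaired: show that $m\mapsto\chi(X_\P,\O_{X_\P}(\lfloor mD_\P\rfloor))$ is a quasi-polynomial on all of $\ZZ$ (write $m=qT+r$ with $TD_\P$ Cartier and apply Snapper's theorem in $q$ for each residue $r$); by your first identity it agrees with $L_\P$ on $\NN$, hence on all of $\ZZ$, and then the rational form of toric Serre duality (\cite[Exercise 9.3.5]{CLS}\,---\,the naive form you invoke is only stated for Cartier divisors, so some care is needed for these Weil divisors) together with \cref{thm:emr} finishes the argument. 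The paper avoids all of this by applying the toric Kawamata-Viehweg vanishing theorem \cite[Corollary 2.5]{M} directly to the Weil divisor $\lceil nD_\P\rceil+K_{X_\P}$, so that $\chi=h^0$, and then counting lattice points via \cref{prop:round2}. Note also that the paper deliberately cites a proof of that vanishing theorem which does not use Serre duality: \cref{rem:emrserre} later derives the equivalence of Serre duality and Ehrhart-Macdonald Reciprocity from this very theorem, and your route, which consumes both Serre duality and Reciprocity, would make that equivalence circular.
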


\begin{rem}
	\cref{thm:toricehrhart} would in theory also provide a method to compute the Ehrhart quasi-polynomial of an arbitrary rational convex polytope via cohomology groups\,---\,although in practice this might be quite challenging.
\end{rem}

Before we state the proof of this result, we show the following useful identities.

\begin{prop}\label{prop:round2}
	Let $\P\subseteq\RR^d$ be a full-dimensional rational convex polytope and let $X_\P$ denote the associated normal projective toric variety. Then 
	\begin{enumerate}
		\item $L_\P(n) = L_{\lfloor n\P\rfloor}(1)$,
		\item $L_{\interior \P}(n) = L_{\interior n\P}(1) = L_{\interior \lceil n\P\rceil}(1) = L_{\P_{\lceil nD_\P \rceil + K_{X_\P}}}(1)$
	\end{enumerate}
	for all $n\in\NN$.
\end{prop}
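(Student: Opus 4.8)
The plan is to reduce all four equalities to a single elementary principle: since each primitive ray generator $u_\rho$ is a lattice vector, the pairing $\langle x, u_\rho\rangle$ is an integer for every $x\in\ZZ^d$, so a lattice point cannot distinguish a rational facet bound from its floor (for non-strict inequalities) or its ceiling (for strict inequalities). Before touching the polytopes I would record the two arithmetic facts that, for $m\in\ZZ$ and $t\in\RR$, one has $m\le t \iff m\le\lfloor t\rfloor$ and $m<t \iff m<\lceil t\rceil \iff m\le\lceil t\rceil-1$. It is also worth noting at the outset that $n\P$, $\lfloor n\P\rfloor$ and $\lceil n\P\rceil$ all share the normal fan $\Sigma_\P$, so all three admit a facet presentation indexed by the same ray set $\Sigma_\P(1)$ and the comparisons below are legitimate facet by facet.

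For part (i), the polytope $n\P$ is cut out by $\langle x, u_\rho\rangle\le nb_\rho$, while $\lfloor n\P\rfloor$ replaces each bound by $\lfloor nb_\rho\rfloor$. Applying the first arithmetic fact to each $\rho$ shows that $n\P\cap\ZZ^d = \lfloor n\P\rfloor\cap\ZZ^d$, and taking cardinalities yields $L_\P(n)=L_{\lfloor n\P\rfloor}(1)$.

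For part (ii) I would treat the three equalities in turn. The first, $L_{\interior\P}(n)=L_{\interior n\P}(1)$, follows from $n\cdot\interior\P=\interior(n\P)$, which holds since positive scaling commutes with taking interiors. The second, $L_{\interior n\P}(1)=L_{\interior\lceil n\P\rceil}(1)$, follows from the second arithmetic fact: $\interior(n\P)$ is defined by the strict inequalities $\langle x,u_\rho\rangle<nb_\rho$ whereas $\interior\lceil n\P\rceil$ is defined by $\langle x,u_\rho\rangle<\lceil nb_\rho\rceil$, and these select exactly the same lattice points. For the third equality I would unwind the divisor side: using $nD_\P=D_{n\P}=\sum_\rho nb_\rho D_\rho$ together with the torus-invariant representative $K_{X_\P}=-\sum_\rho D_\rho$ from the preliminaries, one obtains $\lceil nD_\P\rceil+K_{X_\P}=\sum_\rho(\lceil nb_\rho\rceil-1)D_\rho$, so that $\P_{\lceil nD_\P\rceil+K_{X_\P}}$ is cut out by $\langle x,u_\rho\rangle\le\lceil nb_\rho\rceil-1$; by the equivalence $m\le\lceil t\rceil-1\iff m<\lceil t\rceil$ for integers, its lattice points are precisely the interior lattice points of $\lceil n\P\rceil$, giving $L_{\interior\lceil n\P\rceil}(1)=L_{\P_{\lceil nD_\P\rceil+K_{X_\P}}}(1)$. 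Here I can shorten the bookkeeping by invoking \cref{prop:toricdivisors} and \cref{prop:toricdivisors2} to identify $\P_{\lceil nD_\P\rceil}$ with $\lceil n\P\rceil$ before subtracting the canonical divisor.

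Since the argument is essentially bookkeeping, the only real care is in the conventions rather than in any deep idea. The two delicate points I expect are, first, the degenerate case $n=0$ in the opening equality of part (ii), where $0\cdot\interior\P=\{0\}$ contributes a lattice point while $\interior(0\P)=\emptyset$ does not, so this value must either be excluded from $\NN$ or reconciled separately; and second, the necessity of working with the \emph{torus-invariant} representative $K_{X_\P}=-\sum_\rho D_\rho$ rather than merely its linear equivalence class, since the final equality compares actual lattice-point counts of one specific polytope and would fail for an arbitrary representative.
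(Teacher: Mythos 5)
Your proof is correct and follows essentially the same route as the paper's: both parts are reduced to the fact that $\langle x, u_\rho\rangle\in\ZZ$ for lattice points $x$, so non-strict (resp.\ strict) rational facet bounds may be replaced by their floors (resp.\ ceilings), and the last equality is obtained exactly as in the paper by computing $\P_{\lceil nD_\P\rceil + K_{X_\P}}$ from the torus-invariant representative $K_{X_\P} = -\sum_{\rho} D_\rho$, giving the bounds $\lceil nb_\rho\rceil - 1$. Your caveat about $n=0$ is a fair extra precision rather than a deviation: the paper's own verification of $L_{\interior\P}(n) = L_{\interior n\P}(1)$ divides by $n$ and thus tacitly assumes $n\geq 1$.
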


\begin{proof}
	Let $n\in\NN$. For every ray $\rho$ in the normal fan $\Sigma_\P$ let us denote its primitive ray generator by $u_\rho$. We know that $n\P$ can be written as
	\[ n\P = \{  x \in \RR^d \mid \langle x, u_\rho \rangle \leq nb_\rho \text{ for all } \rho \in\Sigma_\P(1)     \} \]
	for some rational numbers $b_\rho\in\QQ$, $\rho\in\Sigma_\P(1)$.
	
	For the first property, notice that for any lattice point $x\in\ZZ^d$ and any ray $\rho\in\Sigma_\P(1)$, the number $\langle u_\rho, x\rangle$ will be an integer, hence 
	\[   \langle x,u_\rho\rangle \leq nb_\rho \hspace{10pt}\Leftrightarrow\hspace{10pt} \langle x,u_\rho\rangle \leq \lfloor nb_\rho\rfloor,\]
	which proves the claim
	
	The first equality of the second part is clear since
	\begin{align*}
	x \in n\interior\P &\hspace{10pt}\Leftrightarrow\hspace{10pt} \left\langle\frac{1}{n}\cdot x, u_\rho \right\rangle < b_\rho \text{ for all } \rho \in\Sigma_\P(1) \\
	&\hspace{10pt}\Leftrightarrow\hspace{10pt} \langle x, u_\rho \rangle < nb_\rho\text{ for all } \rho \in\Sigma_\P(1)\\
	&\hspace{10pt}\Leftrightarrow\hspace{10pt} x \in \interior n\P
	\end{align*}
	for every $x\in\RR^d$.
	
	The second equality follows for the same reason as the first property (on the round-down), since for any $x \in\ZZ^d$ we have
	\[   \langle x,u_\rho\rangle < nb_\rho \hspace{10pt}\Leftrightarrow\hspace{10pt} \langle x,u_\rho\rangle < \lceil nb_\rho\rceil\]
	for any ray $\rho\in\Sigma_\P(1)$ because the scalar products are integers.
	
	For the third equality, recall that the canonical divisor is linearly equivalent to $-\sum_{\rho\in\Sigma_\P(1)}D_\rho$. So the two polytopes in the equation can be calculated as
	\[ \lceil n\P\rceil = \{  x\in\RR \mid \langle x, u_\rho \rangle \leq \lceil nb_\rho \rceil \text{ for all } \rho \in\Sigma_\P(1)\}   \]
	and 
	\[ \P_{\lceil nD_\P\rceil+K_{X_\P}} =  \{  x\in\RR \mid \langle x, u_\rho \rangle \leq \lceil nb_\rho \rceil - 1 \text{ for all } \rho \in\Sigma_\P(1)\}.  \]
	So for any $x\in\ZZ^d$ we get 
	\[   \langle x,u_\rho\rangle < \lceil nb_\rho\rceil \hspace{10pt}\Leftrightarrow\hspace{10pt} \langle x,u_\rho\rangle \leq \lceil nb_\rho\rceil-1\]
	because the scalar products are integers again.
\end{proof}

We are now able to prove the important formulae.

\begin{proof}[Proof of \cref{thm:toricehrhart}]
	Let us start with the first equality. The divisor $nD_\P$ will be $\QQ$-Cartier and ample, since $D_\P$ is $\QQ$-Cartier and ample (see \cref{prop:dpample}). So by Demazure Vanishing (see \cite[Theorem 9.3.5]{CLS}) we know that all higher cohomology groups of the round-down divisor $\lfloor nD_\P \rfloor$ must vanish. We can thus apply \cref{prop:toricehrhartweil} to the torus invariant Weil divisor $\lfloor nD_\P \rfloor$ to get
	\[  \chi(X_\P, \O_{X_\P}(\lfloor nD_\P \rfloor)) = h^0(X_\P, \O_{X_\P}(\lfloor nD_\P \rfloor)) = L_{\P_{\lfloor nD_\P \rfloor}} (1).  \]
	But because of \cref{prop:toricdivisors2} this polytope is equal to 
	\[ \P_{\lfloor nD_\P \rfloor} = \lfloor \P_{nD_\P}\rfloor = \lfloor n\P_{D_\P}\rfloor = \lfloor n\P\rfloor,   \]
	which proves the first claim using the observation
	\[ L_{\lfloor n\P\rfloor}(1) = L_\P(n)\]
	from \cref{prop:round2}. 
	
	Let us now consider the second case. By the toric version of the Kawamata-Viehweg Vanishing Theorem (see \cite[Corollary 2.5]{M} for a proof without Serre Duality) we know that all higher cohomology groups of the torus invariant Weil divisor $\lceil nD_\P\rceil + K_{X_\P}$ vanish. Using \cref{prop:toricehrhartweil} again we get
	\[  \chi(X_\P, \O_{X_\P}(\lceil nD_\P\rceil + K_{X_\P})) = h^0(X_\P, \O_{X_\P}(\lceil nD_\P\rceil + K_{X_\P})) = L_{\P_{\lceil nD_\P\rceil + K_{X_\P}}} (1).  \]
	By our previous observation in \cref{prop:round2} we know that this number can be rewritten as $L_{\interior \P}(n)$ which had to be proven.
\end{proof}

\section{Dual-Fano and Dual-Integral Polytopes}\label{sec:dual}

Before introducing some new classes of polytopes, let us introduce a shorthand notation.

\begin{nota}
	Let $v\in\RR^d$ and $b\in\RR$. Then we define the halfspaces
	\[ H^+_{v,b} := \{ x \in \RR^d\mid \langle x,v\rangle \geq b   \} \text{ and } H^-_{v,b} := \{  x\in\RR^d\mid\langle x, v\rangle \leq b\} \]
	as well as the affine hyperplane
	\[ H_{v,b} := H^+_{v,b}\cap H^-_{v,b}.  \]
	If $b=0$ we will just write $H^+_v := H^+_{v,0}$, $H^-_v := H^-_{v,0}$ and $H_v := H_{v,0}$.
\end{nota}

We will now recall the standard duality results on polytopes.

\begin{defi}
	Let $S\subseteq\RR^d$ be a set. The \textbf{(polar) dual} $S^*$ of the set $S$ is defined as 
	\[ \P^* := \{ x \in \RR^d \mid \langle x,s \rangle \leq 1 \text{ for all } s \in S  \}.   \]
\end{defi}

We will mostly omit the word {\em polar} when speaking about dual polytopes.

The following is a collection of standard facts that can be found for example in \cite[Theorem 2.11]{Z}).

\begin{theorem}\label{thm:polardual}
	Let $\P = \bigcap_{i=1}^r H^-_{\alpha_i,b_i}\subseteq\RR^d$ be a convex polytope with vertices $v_1, \ldots, v_s$.
	\begin{enumerate}
		\item $\P \subseteq (\P^*)^*$ and equality holds if and only if $0 \in \P$.
		\item If $0 \in \interior\P$ (implying that $\P$ is full-dimensional), then the $b_i$'s can be chosen non-zero. In this case, the polar dual $\P^*$ is a convex polytope and it can be calculated as \[\P^* = \conv(b_1^{-1}\alpha_1, \ldots, b_r^{-1}\alpha_r) = \bigcap_{i=1}^s H^-_{v_i,1}.\]
		\item If $0 \in \interior \P$ then $\P$ is rational if and only if $\P^*$ is rational.
		\item For every $\lambda \in \RR_{>0}$ we have $(\lambda\P)^* = \lambda^{-1}\P^*$.
		\item If $\P^*$ is a convex polytope, there exists an inclusion-reversing bijection between the faces of $\P$ and the faces of $\P^*$.
	\end{enumerate}
\end{theorem}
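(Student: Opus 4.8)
The plan is to prove \cref{thm:polardual} as a sequence of essentially standard convex-geometry facts, handling the five parts in an order that lets later parts reuse earlier ones. First I would establish (i), since the double-dual containment is the most elementary. The inclusion $\P\subseteq(\P^*)^*$ follows directly from the definition: if $s\in\P$ then $\langle x,s\rangle\le 1$ for all $x\in\P^*$ by the defining condition of $\P^*$, which is exactly the condition for $s\in(\P^*)^*$. For the equality statement, note that $0\in(\P^*)^*$ always (since $\langle 0,x\rangle=0\le 1$), so if $\P=(\P^*)^*$ then necessarily $0\in\P$. The converse — that $0\in\P$ forces equality — is the substantive direction and is the bipolar theorem; I would prove it by a separating-hyperplane argument: given $s\notin\P$ with $\P$ closed convex and $0\in\P$, separate $s$ from $\P$ by a hyperplane, normalize it (using $0\in\P$ to get the right sign) to produce an $x\in\P^*$ with $\langle x,s\rangle>1$, showing $s\notin(\P^*)^*$.

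Next I would prove (ii), which is the computational heart of the theorem. Assuming $0\in\interior\P$, full-dimensionality is immediate, and each facet-defining inequality $\langle x,\alpha_i\rangle\le b_i$ must have $b_i>0$ because $0\in\interior\P$ satisfies it strictly (so $0=\langle 0,\alpha_i\rangle<b_i$). After rescaling each $\alpha_i$ so that $b_i=1$, the inequality $\langle x,\alpha_i\rangle\le 1$ exhibits $\alpha_i$ (rescaled) as a point of $\P^*$. The identity $\P^*=\conv(b_1^{-1}\alpha_1,\ldots,b_r^{-1}\alpha_r)$ I would obtain by showing that a linear functional is bounded by $1$ on all of $\P$ precisely when it is bounded by $1$ on the generators of $\P$, which reduces the dual description to the convex hull of these normalized normals; the second equality $\P^*=\bigcap_i H^-_{v_i,1}$ is then the dual picture obtained by swapping the roles of vertices and facets, i.e. applying the same reasoning with $\P$ written as a convex hull of its vertices $v_j$. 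That $\P^*$ is bounded (hence a polytope, not merely a polyhedron) follows because $0\in\interior\P$ implies $\P$ contains a small ball, which forces $\P^*$ to be contained in a large ball.

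Parts (iii) and (iv) are then quick consequences. For (iv), I would simply compute: $x\in(\lambda\P)^*$ iff $\langle x,\lambda s\rangle\le 1$ for all $s\in\P$ iff $\langle \lambda x,s\rangle\le 1$ for all $s\in\P$ iff $\lambda x\in\P^*$ iff $x\in\lambda^{-1}\P^*$. For (iii), rationality of $\P$ means the facet normals $\alpha_i$ and the $b_i$ can be taken rational, so by the vertex formula in (ii) the vertices $b_i^{-1}\alpha_i$ of $\P^*$ are rational, giving rationality of $\P^*$; applying this to $\P^*$ and using the bipolar identity $(\P^*)^*=\P$ from (i) (valid since $0\in\interior\P\subseteq\P$) gives the converse. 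Finally, for (v) I would invoke the standard inclusion-reversing face correspondence for polar duals, defining for each face $F$ of $\P$ the dual face $F^\diamond := \{x\in\P^*\mid \langle x,v\rangle = 1\text{ for all } v\in F\}$, and verifying it is an inclusion-reversing bijection with inverse given by the analogous construction on $\P^*$, using the bipolar theorem to close the loop.

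The main obstacle I expect is the bipolar equality in (i), namely proving $0\in\P\Rightarrow(\P^*)^*\subseteq\P$; this is where the genuine convexity input (a separating-hyperplane / Hahn–Banach argument, together with careful sign normalization using $0\in\P$) is required, whereas everything else is bookkeeping with the definitions. A secondary delicate point is ensuring in (ii) that $\P^*$ is bounded and that the passage between the $\cap H^-$ description and the $\conv$ description is fully justified rather than asserted; but since the statement cites \cite[Theorem 2.11]{Z} as a reference collecting these as standard facts, the intended proof is likely just to assemble these classical arguments rather than to develop them from scratch.
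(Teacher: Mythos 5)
Your outline is correct, but note that the paper never proves \cref{thm:polardual} at all: it is stated as "a collection of standard facts" with a citation to \cite[Theorem 2.11]{Z}, so the paper's "proof" is just that reference. Your sketch (bipolar theorem via a separating-hyperplane argument with the sign/scaling normalization from $0\in\P$, the facet--vertex swap for the two descriptions of $\P^*$, boundedness from the ball containment, the direct scaling computation, and the standard face correspondence) is a faithful reconstruction of the classical arguments behind that citation, so it is consistent with the paper's treatment rather than a divergence from it.
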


\begin{defi}
	A convex polytope $\P$ is called \textbf{reflexive} if both $\P$ itself and its polar dual $\P^*$ are lattice polytopes.
\end{defi}

Since this notion is too rigid for our applications we want to look at some weaker properties.

\begin{defi}\label{defi:dualfano}
	\begin{enumerate}
		\item A convex polytope is called a \textbf{quasi-reflexive} polytope if there exists a lattice point $p$ such that the translated polytope $\P-p$ is reflexive.
		\item A convex polytope is called a \textbf{Fano} polytope if its vertices are primitive lattice vectors.
		\item A convex polytope $\P$ is called a \textbf{dual-Fano} polytope there exists a lattice point $p$ such that the dual of the translated polytope $\P-p$ is a Fano polytope.
		\item A convex polytope $\P$ is called a \textbf{dual-integral} polytope if there exists a lattice point $p$ such that the dual of the translated polytope $\P-p$ is a lattice polytope.
	\end{enumerate}
\end{defi}

\begin{rem}
	While the notion of Fano polytopes is quite standard (see for example \cite{N}), the other notions are not. But the naming should be quite self-explanatory and it will be a useful shorthand.
	
	It should be noted that these definitions deviate from the definitions in \cite{S1}. In fact, we not only changed the name of dual-integral polytopes from {\em weakly dual-Fano} polytopes, but we also incorporated the lattice point translations into the definition and added the new definition of quasi-reflexive polytopes.
\end{rem}

\begin{rem}\label{rem:dualfano}
	Notice that every quasi-reflexive, dual-Fano or dual-integral polytope must contain a lattice point in its interior that we can chose as the translation vector. This is due to the fact that after translation by a lattice vector the translated dual of the translated polytope must be bounded, so it must contain the origin in its interior. Additionally, by \cref{thm:polardual} each of these polytopes must be rational since they are the lattice point translations of the dual of a lattice (hence rational) polytope.
\end{rem}

Before we get into the details, let us look at an example.

\begin{ex}\label{ex:dualfano}
	In \cref{fig:dualfano} we see the sketch of three different polytopes in $\RR^2$ and their dual polytopes. We will notice in this example that the three classes of polytopes introduced before are distinct.
	
	The first polytope is given by the inequalities
	\[\{(x,y)\in\RR^2\mid x\geq -1, y\geq -1, 2x+3y\leq 1\} \]
	and we clearly see from the sketch that it is reflexive, dual-Fano and dual-integral.
	
	The second polytope is given by the inequalities
	\[\{(x,y)\in\RR^2\mid x\geq -1, y\geq -1, x+3y\leq 1\} \]
	and we clearly see from the sketch that it is dual-Fano and dual-integral. However, since the upper left vertex is not integral, this polytope is not reflexive.
	
	Finally, the third polytope is given by the inequalities
	\[\{(x,y)\in\RR^2\mid x\geq -1, y\geq -1, 3x+3y\leq 1\}. \]
	From the sketch we see that it is dual-integral. But it is not a lattice polytope, so it cannot be reflexive. Additionally the upper right vertex of its dual polytope is the point $(3,3)$ which is an integral multiple of the lattice point $(1,1)$. Hence the dual polytope will not be Fano.
\end{ex}

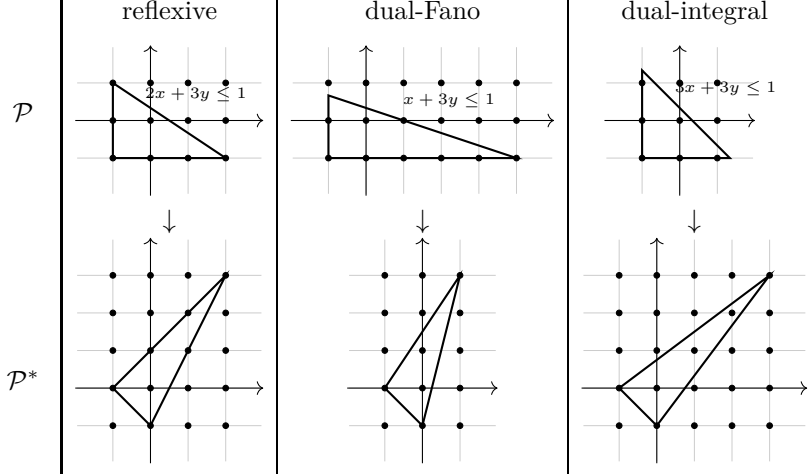
\begin{figure}[htb]\centering
	\caption{Sketch of the three different polytopes in \cref{ex:dualfano} and their dual polytopes. The first one is reflexive, dual-Fano and dual-integral; the second one is dual-Fano and dual-integral but not reflexive; the third one is only dual-integral.}\label{fig:dualfano}
	
	\begin{tabular}{c|c|c|c}
		&reflexive&dual-Fano&dual-integral\\
		\begin{tikzpicture}
		\node (1) at (0,1) { };
		\node (2) at (0,-1) { };
		\node (3) at (0,0) {$\P$};
		\end{tikzpicture}&
		\begin{tikzpicture}[scale=.5]
		\draw[step=1,help lines,black!20] (-1.95,-1.95) grid (2.95,1.95);
		\draw[->] (-2,0) -- (3,0);
		\draw[->] (0,-2) -- (0,2);
		\foreach \Point in {(-1,-1), (-1,0), (-1,1), (0,-1), (0,0), (0,1) , (1,-1), (1,0), (1,1), (2,-1), (2,0), (2,1)}
		\draw[fill=black] \Point circle (0.075);
		\coordinate (1) at (-1,1);
		\coordinate (2) at (-1,-1);
		\coordinate (3) at (2,-1);
		\draw[thick] (1) -- (2) -- (3) -- node[xshift=10pt,yshift=10pt] {\tiny $2x+3y\leq1$} cycle;
		\end{tikzpicture}
		&
		\begin{tikzpicture}[scale=.5]
		\draw[step=1,help lines,black!20] (-1.95,-1.95) grid (4.95,1.95);
		\draw[->] (-2,0) -- (5,0);
		\draw[->] (0,-2) -- (0,2);
		\foreach \Point in {(-1,-1), (-1,0), (-1,1), (0,-1), (0,0), (0,1) , (1,-1), (1,0), (1,1), (2,-1), (2,0), (2,1), (3,-1), (3,0), (3,1), (4,-1), (4,0), (4,1)}
		\draw[fill=black] \Point circle (0.075);
		\coordinate (1) at (-1,2/3);
		\coordinate (2) at (-1,-1);
		\coordinate (3) at (4,-1);
		\draw[thick] (1) -- (2) -- (3) -- node[xshift=10pt,yshift=10pt] {\tiny $x+3y\leq1$} cycle;
		\end{tikzpicture}
		&
		\begin{tikzpicture}[scale=.5]
		\draw[step=1,help lines,black!20] (-1.95,-1.95) grid (1.95,1.95);
		\draw[->] (-2,0) -- (2,0);
		\draw[->] (0,-2) -- (0,2);
		\foreach \Point in {(-1,-1), (-1,0), (-1,1), (0,-1), (0,0), (0,1) , (1,-1), (1,0), (1,1)}
		\draw[fill=black] \Point circle (0.075);
		\coordinate (1) at (-1,4/3);
		\coordinate (2) at (-1,-1);
		\coordinate (3) at (4/3,-1);
		\draw[thick] (1) -- (2) -- (3) -- node[xshift=15pt,yshift=10pt] {\tiny $3x+3y\leq1$} cycle;
		\end{tikzpicture}\\
		
		&$\downarrow$&$\downarrow$&$\downarrow$\\
		
		\begin{tikzpicture}
		\node (1) at (0,1) { };
		\node (2) at (0,-1) { };
		\node (3) at (0,0) {$\P^*$};
		\end{tikzpicture}
		&
		\begin{tikzpicture}[scale=.5]
		\draw[step=1,help lines,black!20] (-1.95,-1.95) grid (2.95,3.95);
		\draw[->] (-2,0) -- (3,0);
		\draw[->] (0,-2) -- (0,4);
		\foreach \Point in {(-1,-1), (-1,0), (-1,1), (-1,2), (-1,3), (0,-1), (0,0), (0,1), (0,2), (0,3), (1,-1), (1,0), (1,1), (1,2), (1,3), (2,-1), (2,0), (2,1), (2,2), (2,3)}
		\draw[fill=black] \Point circle (0.075);
		\coordinate (1) at (-1,0);
		\coordinate (2) at (0,-1);
		\coordinate (3) at (2,3);
		\draw[thick] (1) -- (2) -- (3) --  cycle;
		\end{tikzpicture}
		&
		\begin{tikzpicture}[scale=.5]
		\draw[step=1,help lines,black!20] (-1.95,-1.95) grid (1.95,3.95);
		\draw[->] (-2,0) -- (2,0);
		\draw[->] (0,-2) -- (0,4);
		\foreach \Point in {(-1,-1), (-1,0), (-1,1), (-1,2), (-1,3), (0,-1), (0,0), (0,1), (0,2), (0,3), (1,-1), (1,0), (1,1), (1,2), (1,3)}
		\draw[fill=black] \Point circle (0.075);
		\coordinate (1) at (-1,0);
		\coordinate (2) at (0,-1);
		\coordinate (3) at (1,3);
		\draw[thick] (1) -- (2) -- (3) --  cycle;
		\end{tikzpicture}
		&	
		\begin{tikzpicture}[scale=.5]
		\draw[step=1,help lines,black!20] (-1.95,-1.95) grid (3.95,3.95);
		\draw[->] (-2,0) -- (4,0);
		\draw[->] (0,-2) -- (0,4);
		\foreach \Point in {(-1,-1), (-1,0), (-1,1), (-1,2), (-1,3), (0,-1), (0,0), (0,1), (0,2), (0,3), (1,-1), (1,0), (1,1), (1,2), (1,3), (2,-1), (2,0), (2,1), (2,2), (2,3), (3,-1), (3,0), (3,1), (3,2), (3,3)}
		\draw[fill=black] \Point circle (0.075);
		\coordinate (1) at (-1,0);
		\coordinate (2) at (0,-1);
		\coordinate (3) at (3,3);
		\draw[thick] (1) -- (2) -- (3) --  cycle;
		\end{tikzpicture}	
	\end{tabular}
\end{figure}

Judging from the example we might guess that there is an inclusion relation between the three classes of polytopes and we could even guess how a standard form of these different polytopes would look like. We will formalize this in the next theorem and its corollary.

\begin{theorem}\label{thm:dualfano}
	\begin{enumerate}
		\item Every quasi-reflexive polytope is dual-Fano.
		\item Every dual-Fano polytope is dual-integral.
		\item A polytope is quasi-reflexive if and only if it is a dual-integral lattice polytope.
	\end{enumerate}
\end{theorem}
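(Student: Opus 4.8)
The three parts all reduce to understanding how reflexivity is encoded in the facet presentation of a polytope, so the plan is to translate each implication into the explicit description of the dual supplied by \cref{thm:polardual}(ii) and (v). Throughout I will invoke \cref{rem:dualfano} to fix a lattice translation vector $p$ with $0\in\interior(\P-p)$, and abbreviate $Q:=\P-p$. The real work sits in part (i); parts (ii) and (iii) will turn out to be essentially bookkeeping with the definitions once (i) is in hand.

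For part (i) I would begin from a reflexive $Q$ and record its irredundant facet presentation $Q=\bigcap_\rho H^-_{u_\rho,b_\rho}$, where the $u_\rho$ are the primitive inner facet normals. Since $Q$ is a lattice polytope with $0\in\interior Q$, each supporting hyperplane meets $\ZZ^d$ away from the origin, so $b_\rho=\langle v,u_\rho\rangle\in\ZZ_{>0}$ for any lattice vertex $v$ lying on the corresponding facet. By \cref{thm:polardual}(ii) and (v) the vertices of $Q^*$ are exactly the points $b_\rho^{-1}u_\rho$ ranging over the facets of $Q$. The key observation is that reflexivity forces every $b_\rho=1$: because $Q^*$ is a lattice polytope we have $b_\rho^{-1}u_\rho\in\ZZ^d$, and since $u_\rho$ is primitive this means $b_\rho$ divides the $\gcd$ of the coordinates of $u_\rho$, which is $1$. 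Hence the vertices of $Q^*=(\P-p)^*$ are precisely the primitive vectors $u_\rho$, so $(\P-p)^*$ is Fano and $\P$ is dual-Fano. Part (ii) is then immediate: the vertices of a Fano polytope are primitive lattice vectors, so in particular $(\P-p)^*$ is a lattice polytope, and the same $p$ witnesses that $\P$ is dual-integral.

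For part (iii), the forward direction combines (i) and (ii)\,---\,a quasi-reflexive polytope is dual-Fano, hence dual-integral\,---\,together with the trivial remark that if $\P-p$ is reflexive (so a lattice polytope) and $p\in\ZZ^d$, then $\P$ is itself a lattice polytope. For the converse I would take a dual-integral lattice polytope $\P$ and pick the interior lattice point $p$ from \cref{rem:dualfano} for which $(\P-p)^*$ is a lattice polytope. Then $\P-p$ is a lattice polytope (a lattice translate of one) satisfying $0\in\interior(\P-p)$, and its dual is a lattice polytope by hypothesis; since both $\P-p$ and $(\P-p)^*$ are lattice polytopes, $\P-p$ is reflexive, so $\P$ is quasi-reflexive.

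The only genuine content is the primitivity argument in part (i), and I expect the main obstacle to be making that step airtight: one must be careful that the facet presentation uses the irredundant primitive normals, that $b_\rho$ is a positive integer (which needs $Q$ to be a lattice polytope with $0$ in its interior), and that the $b_\rho^{-1}u_\rho$ are genuinely the vertices of $Q^*$ rather than merely a spanning set, so that the lattice condition on $Q^*$ can be applied to each of them individually. Once these three points are verified the divisibility forcing $b_\rho=1$, and hence all three implications, follow at once.
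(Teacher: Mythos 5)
Your proposal is correct and follows essentially the same route as the paper's own proof: translate by the interior lattice point, use the primitive facet presentation to force the integral right-hand sides $b_\rho$ to equal $1$ via primitivity of the $u_\rho$, and then dispatch (ii) and (iii) from the definitions. If anything, you are slightly more careful than the paper in justifying (via \cref{thm:polardual}(ii) and (v)) that the points $b_\rho^{-1}u_\rho$ are genuinely the vertices of the dual rather than merely a spanning set.
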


\begin{proof}
	Let $\P\subseteq\RR^d$ be a convex polytope. We can assume $\P$ to be rational and of full-dimension since all properties occurring in the statements imply rationality and full-dimensionality. Let $\Sigma_\P$ denote its normal fan with primitive ray generators $u_\rho$, $\rho\in\Sigma(1)$, i.e. $u_\rho$ is the unique lattice vector in $\rho$ whose coordinates are coprime.
	
	For the first claim, assume that $\P$ is a quasi-reflexive polytope. By \cref{rem:dualfano} we know that there exists a lattice point $p$ in the interior of $\P$. Let $\Q := \P-p$. Then $\Sigma_\Q = \Sigma_\P$. We can find rational numbers $b_\rho$ such that
	\[ \Q = \{ x \in \RR^d \mid \langle x, u_\rho \rangle \leq b_\rho \text{ for all } \rho\in\Sigma_\P(1)\}. \]
	Because $\P$ and hence $\Q$ is a lattice polytope, the hyperplanes $H_{u_\rho, b_\rho}$ must contain lattice points, which requires all $b_\rho$ to be integers. Since $0\in\interior\Q$, all $b_\rho$ must be strictly positive. By \cref{thm:polardual} we know that vertices of the dual polytope are given by
	\[ \vert\Q* = \left\{\frac{u_\rho}{b_\rho}\,\,\middle|\,\,\rho\in\Sigma_\P(1)\right\}.\]
	Since $\Q^*$ must be a lattice polytope, we know that all $b_\rho$ must be equal to $1$ (because the $u_\rho$ are primitive). So we get
	\[ \vert\Q^* = \left\{u_\rho\,\,\middle|\,\,\rho\in\Sigma_\P(1)\right\},\]
	which means that $\Q^*$ is a Fano polytope, i.e. $\P$ is a dual-Fano polytope.

	The second claim is obvious.
	
	
	Notice that claims (i) and (ii) already prove one direction of claim (iii). But the other direction follows immediately from the definitions.
\end{proof}

Interestingly, from the proof of \cref{thm:dualfano} we get the following descriptions.

\begin{cor}\label{cor:dualfano}
	Let $\P\subseteq\RR^d$ be a full-dimensional rational convex polytope and let $u_\rho$, $\rho\in\Sigma_\P(1)$, denote the primitive ray generators of the normal fan $\Sigma_\P$ of $\P$.
	
	\begin{enumerate}
		\item The polytope $\P$ is dual-integral if and only if there exist strictly positive integers $k_\rho$, $\rho\in\Sigma_\P(1)$, such that
		\[ \P = p + \{ x\in\RR^d \mid \langle x, u_\rho \rangle \leq k_\rho^{-1} \text{ for all } \rho\in\Sigma_\P(1)\}\]
		for some $p\in\interior\P\cap\ZZ^d$.
		\item The polytope $\P$ is dual-Fano if and only if 
		\[ \P = p + \{ x\in\RR^d \mid \langle x, u_\rho \rangle \leq 1 \text{ for all } \rho\in\Sigma_\P(1)\}\]
		for some $p\in\interior\P\cap\ZZ^d$.
		\item The polytope $\P$ is quasi-reflexive if and only if it is a lattice polytope and
		\[ \P = p + \{ x\in\RR^d \mid \langle x, u_\rho \rangle \leq 1 \text{ for all } \rho\in\Sigma_\P(1)\}\]
		for some $p\in\interior\P\cap\ZZ^d$.
	\end{enumerate}
\end{cor}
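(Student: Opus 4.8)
The plan is to extract the corollary directly from the computation carried out in the proof of \cref{thm:dualfano}, keeping careful track of the facet constants $b_\rho$ that were suppressed there. By \cref{rem:dualfano} every polytope occurring in any of the three statements contains a lattice point $p$ in its interior, and after translating we set $\Q := \P - p$, so that $0\in\interior\Q$ and $\Sigma_\Q = \Sigma_\P$. Writing $\Q = \{x\in\RR^d \mid \langle x, u_\rho\rangle \leq b_\rho \text{ for all } \rho\in\Sigma_\P(1)\}$ in its irredundant facet presentation, the condition $0\in\interior\Q$ forces every $b_\rho$ to be strictly positive. All three equivalences will then come down to translating a latticeness/primitivity condition on $\Q^*$ into an arithmetic condition on the numbers $b_\rho$, after which $\P = p + \Q$ recovers the asserted standard forms.

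The key input is \cref{thm:polardual}(ii), which tells us that $\vert\Q^* = \{b_\rho^{-1}u_\rho \mid \rho\in\Sigma_\P(1)\}$; because $\P$ is full-dimensional and the presentation is irredundant, each of these points is genuinely a vertex of $\Q^*$. For part (i) I would observe that, since $u_\rho$ is primitive, $b_\rho^{-1}u_\rho$ lies in $\ZZ^d$ if and only if $b_\rho^{-1}$ is a positive integer $k_\rho$: writing $b_\rho^{-1} = m/n$ in lowest terms, the vector $(m/n)u_\rho$ is integral exactly when $n$ divides every coordinate of $u_\rho$, i.e. when $n=1$. Hence $\Q^*$ is a lattice polytope precisely when each $b_\rho = k_\rho^{-1}$ for a positive integer $k_\rho$, which is exactly the claimed form; the converse direction is immediate from the same computation, since the convex hull of the resulting integral points is a lattice polytope.

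For part (ii) the same vertex description applies, but now the Fano condition additionally demands that each $b_\rho^{-1}u_\rho = k_\rho u_\rho$ be \emph{primitive}. Since $u_\rho$ is already primitive, $k_\rho u_\rho$ is primitive if and only if $k_\rho = 1$, so the dual-Fano condition is equivalent to $b_\rho = 1$ for every $\rho$, which is the claimed form. Part (iii) then follows by combining the previous two parts with \cref{thm:dualfano}: a quasi-reflexive polytope is in particular a lattice polytope, and by \cref{thm:dualfano}(i) it is dual-Fano, so part (ii) yields the $\langle x,u_\rho\rangle\leq 1$ form; conversely, a lattice polytope of this form is dual-Fano by part (ii), hence dual-integral by \cref{thm:dualfano}(ii), and being a dual-integral lattice polytope it is quasi-reflexive by \cref{thm:dualfano}(iii).

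The only genuinely nonroutine point is the arithmetic step identifying when $b_\rho^{-1}u_\rho$ is an integral, respectively primitive, lattice vector; this is where primitivity of the ray generators $u_\rho$ is used in an essential way, and it is precisely what separates the three classes. Everything else is bookkeeping, so I do not expect any serious obstacle beyond ensuring that the generating points $b_\rho^{-1}u_\rho$ are counted as vertices rather than possibly redundant points, which is guaranteed by the irredundancy of the facet presentation coming from the normal fan.
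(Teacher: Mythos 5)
Your proposal is correct and takes essentially the same approach the paper intends: the paper presents \cref{cor:dualfano} as a direct extract of the proof of \cref{thm:dualfano}, namely translating by the interior lattice point from \cref{rem:dualfano}, identifying the vertices of the dual as $b_\rho^{-1}u_\rho$ via \cref{thm:polardual}, and using primitivity of the $u_\rho$ to pin down the facet constants. Your explicit lowest-terms argument showing $b_\rho^{-1}u_\rho\in\ZZ^d$ forces $b_\rho^{-1}\in\ZZ_{>0}$ (and primitivity forces $b_\rho=1$), together with the reduction of part (iii) to \cref{thm:dualfano}, supplies exactly the details the paper leaves implicit.
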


In light of toric geometry we repeat the following observation. It can be found in in Nill's doctoral thesis \cite[Proposition 1.4]{N} and follows from a result by Batyrev \cite[Proposition 2.2.23]{B}. Recall that the anticanonical divisor on a normal toric variety $X_\Sigma$ is given by $-K_{X_\Sigma} = \sum_{\rho\in\Sigma(1)}D_\rho$, hence its associated polytope $\P_{-K_{X_\Sigma}}$ can be written as
\[ \P_{-K_{X_\P}} = \{ x\in\RR^d \mid \langle x, u_\rho \rangle \leq 1 \text{ for all } \rho\in\Sigma_\P(1)\}.\]

\begin{theorem}[Batyrev, Nill]\label{thm:toricfano}
	The map $(X_\P, -K_{X_\P}) \to \P_{-K_{X_\P}}$ induces a one-to-one correspondence between isomorphism classes of anticanonically polarized $\QQ$-Gorenstein Fano toric varieties and dual-Fano polytopes up to lattice translations. The same statement holds true for Gorenstein Fano toric varieties and reflexive polytopes.
\end{theorem}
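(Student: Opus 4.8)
The plan is to assemble the statement from the polytope--divisor dictionary recorded in the preliminaries together with the explicit normal form for dual-Fano polytopes obtained in \cref{cor:dualfano}. The map in question is the restriction to anticanonical polarizations of the standard correspondence between polarized toric varieties and full-dimensional rational polytopes taken up to lattice translation; concretely, I would send $(X_\P,-K_{X_\P})$ to the polytope $\P_{-K_{X_\P}}=\{x\in\RR^d\mid\langle x,u_\rho\rangle\le 1\text{ for all }\rho\in\Sigma_\P(1)\}$ and, conversely, send a polytope $\P$ to the pair $(X_\P,-K_{X_\P})$. The whole argument then splits into verifying that each construction lands in the asserted class and that the two are mutually inverse modulo lattice translation.

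First I would treat the $\QQ$-Gorenstein Fano / dual-Fano equivalence. For the polytope-to-variety direction, given a dual-Fano polytope I invoke \cref{cor:dualfano}(ii) together with \cref{rem:dualfano} to translate it by an interior lattice point $p$ into the normal form $\P-p=\{x\mid\langle x,u_\rho\rangle\le 1\}$, which is exactly $\P_{-K_{X_\P}}$; hence $D_{\P-p}=-K_{X_\P}$, and \cref{prop:dpample} guarantees this divisor is ample and $\QQ$-Cartier, so $(X_\P,-K_{X_\P})$ is an anticanonically polarized $\QQ$-Gorenstein Fano toric variety. For the converse, an anticanonically polarized $\QQ$-Gorenstein Fano $X$ has $-K_X$ ample and $\QQ$-Cartier, so ampleness identifies the defining fan with the normal fan of $\P:=\P_{-K_X}$ and $X$ with $X_\P$; since $\langle 0,u_\rho\rangle=0<1$ and $\P$ is full-dimensional, $0\in\interior\P$, and \cref{cor:dualfano}(ii) shows $\P$ is dual-Fano.

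For the Gorenstein / reflexive refinement I would add one classical input: for an \emph{ample} torus-invariant divisor $D_\P$ the vertices of $\P$ are precisely the local data $m_\sigma$ of the associated support function, so $D_\P$ is Cartier if and only if $\P$ is a lattice polytope. Applying this to $-K_{X_\P}$ in normal form, $(X_\P,-K_{X_\P})$ is Gorenstein Fano if and only if it is $\QQ$-Gorenstein Fano and $\P_{-K_{X_\P}}$ is a lattice polytope, i.e.\ if and only if $\P$ is a dual-Fano lattice polytope. It then remains to identify dual-Fano lattice polytopes with reflexive ones: the normal form already forces every facet of $\P-p$ to sit at lattice distance $1$ from the origin, so the vertices $u_\rho$ of $(\P-p)^*$ are primitive lattice points and $(\P-p)^*$ is a lattice polytope, which together with $\P-p$ being a lattice polytope is exactly reflexivity; conversely a reflexive polytope has all $b_\rho=1$ by primitivity of the $u_\rho$. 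This is essentially \cref{thm:dualfano} specialized to the lattice case.

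Finally I would check well-definedness and bijectivity. Translating $\P$ by a lattice vector leaves both the normal fan (hence $X_\P$) and the intrinsic divisor $-K_{X_\P}=\sum_{\rho}D_\rho$ unchanged, so the assignment descends to lattice-translation classes, while the variety side produces a canonically fixed polytope $\P_{-K_{X_\P}}$ and hence a well-defined translation class; \cref{prop:toricdivisors}(iv) ensures these two maps are inverse to each other. The main obstacle I anticipate is not any single deep step but the careful bookkeeping of this translation ambiguity: one must pin down the interior lattice point $p$ used for normalization and confirm that the intrinsic anticanonical divisor on the variety side matches precisely the translation classes on the polytope side, and separately import the ample-versus-Cartier characterization, which is the only ingredient not already packaged in \cref{cor:dualfano} and \cref{prop:dpample}.
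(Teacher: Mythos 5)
Your proposal is essentially correct, but the comparison here is asymmetric: the paper itself gives \emph{no proof} of this theorem\,---\,it is quoted as a known result from Nill's thesis \cite[Proposition 1.4]{N}, following Batyrev \cite[Proposition 2.2.23]{B}. What you have done is reassemble the statement from the paper's own toolkit, and the assembly works. Indeed, \cref{cor:dualfano}(ii) together with \cref{prop:toricdivisors} shows that a dual-Fano polytope, translated by its interior lattice point $p$, equals $\P_{-K_{X_\P}}$, so $D_{\P-p}=-K_{X_\P}$ is ample and $\QQ$-Cartier by \cref{prop:dpample}; conversely, ampleness of $-K_{X_\Sigma}$ identifies $\Sigma$ with the normal fan of $\P_{-K_{X_\Sigma}}$, and $0$ is an interior lattice point, so \cref{cor:dualfano}(ii) applies with $p=0$. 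Your Gorenstein refinement is also sound: the one outside ingredient is that an ample torus-invariant divisor is Cartier precisely when its polytope is a lattice polytope, a fact the paper itself invokes elsewhere (via \cite[Theorem 4.2.8]{CLS} in \cref{rem:emrserre}), and the passage from dual-Fano lattice polytopes to reflexive ones up to translation is exactly the content of \cref{thm:dualfano}. Two points deserve more care in a final write-up. First, in the variety-to-polytope direction you should note explicitly that $\P_{-K_{X_\Sigma}}$ is full-dimensional (this follows from ampleness on a complete toric variety), since \cref{cor:dualfano} is stated only for full-dimensional polytopes. Second, the bookkeeping of ``isomorphism classes'': your two maps are mutually inverse on the nose only when the lattice and torus are fixed; if isomorphism means abstract isomorphism of polarized varieties, the polytope side must additionally be quotiented by unimodular transformations in $\mathrm{GL}_d(\ZZ)$, which is how Nill formulates the correspondence. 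That looseness is inherited from the statement as printed in the paper, so it is not a defect you introduced, but it should be flagged. In sum, your route makes the theorem self-contained within the paper at the cost of importing one standard toric fact, whereas the paper simply outsources the entire statement to the literature.
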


We will conclude this overview with a beautiful result by Hibi \cite{Hi}.

\begin{theorem}[Hibi]\label{thm:hibi}
	A full-dimensional rational convex polytope $\P\subseteq\RR^d$ is dual-integral if and only if 
	\[  \#(n\P\cap\ZZ^d) = \#(\interior(n+1)\P\cap\ZZ^d) \]
	for all $n \in \NN$.
\end{theorem}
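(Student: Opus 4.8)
The plan is to recast the counting identity analytically and reduce it, facet by facet, to an elementary identity for the floor and ceiling functions. Throughout I work with the Ehrhart notation, so the claim reads $L_\P(n)=L_{\interior\P}(n+1)$ for all $n\in\NN$. Two preliminary reductions are available. First, both sides are invariant under lattice translation: for $p\in\ZZ^d$ one has $n(\P-p)=n\P-np$ with $np\in\ZZ^d$, so translating $\P$ by a lattice vector changes neither $L_\P$ nor $L_{\interior\P}$; since dual-integrality is also defined up to lattice translation, I may translate freely. Second, evaluating the identity at $n=0$ gives $1=L_\P(0)=L_{\interior\P}(1)=\#(\interior\P\cap\ZZ^d)$, so the identity forces $\P$ to contain a \emph{unique} interior lattice point. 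Translating it to the origin, I may assume $0\in\interior\P\cap\ZZ^d$ and write $\P=\{x\in\RR^d\mid\langle x,u_\rho\rangle\le b_\rho,\ \rho\in\Sigma_\P(1)\}$ with all $b_\rho>0$. By \cref{cor:dualfano}(i), dual-integrality is now exactly the statement that $b_\rho=1/k_\rho$ for positive integers $k_\rho$ (equivalently, each dual vertex $(1/b_\rho)u_\rho$ is a lattice point, using that $u_\rho$ is primitive).

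For the forward direction I assume $b_\rho=1/k_\rho$ and compare membership facet by facet. Since $\langle x,u_\rho\rangle\in\ZZ$ for $x\in\ZZ^d$, a lattice point $x$ lies in $n\P$ iff $\langle x,u_\rho\rangle\le\lfloor n/k_\rho\rfloor$ for all $\rho$, and lies in $\interior((n+1)\P)$ iff $\langle x,u_\rho\rangle<(n+1)/k_\rho$, i.e. iff $\langle x,u_\rho\rangle\le\lceil (n+1)/k_\rho\rceil-1$. The elementary identity $\lceil (n+1)/k\rceil=\lfloor n/k\rfloor+1$ (valid for all integers $n\ge 0$, $k\ge 1$, as is checked by writing $n=qk+r$) shows these two systems of integral inequalities coincide for every $\rho$. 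Hence $n\P\cap\ZZ^d=\interior((n+1)\P)\cap\ZZ^d$ as sets, and in particular $L_\P(n)=L_{\interior\P}(n+1)$.

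For the converse I first note that because $b_\rho>0$ one has $n\P\subseteq\interior((n+1)\P)$, so $n\P\cap\ZZ^d\subseteq\interior((n+1)\P)\cap\ZZ^d$; the counting identity therefore upgrades to the set equality $n\P\cap\ZZ^d=\interior((n+1)\P)\cap\ZZ^d$ for all $n$. The heart of the matter is the single-variable statement: for real $b>0$ one has $\lfloor nb\rfloor=\lceil(n+1)b\rceil-1$ for all $n\in\NN$ if and only if $b=1/k$ for some $k\in\ZZ_{>0}$. The ``if'' part is the identity above; for ``only if'', evaluation at $n=0$ gives $\lceil b\rceil=1$, hence $0<b\le 1$, and evaluation at $n=\lceil 1/b\rceil-1$ (where $\lfloor nb\rfloor=0$) forces $\lceil \lceil 1/b\rceil\, b\rceil=1$, whence $\lceil 1/b\rceil\,b=1$ and $b=1/k$. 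Moreover, if $b\ne 1/k$ then a short equidistribution/periodicity analysis of $\lceil(n+1)b\rceil-1-\lfloor nb\rfloor\in\{0,1\}$ shows the value $1$ (a strict gap) occurs for a positive density of $n$, hence for infinitely many \emph{large} $n$.

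Assuming now the set equality but supposing $\P$ were not dual-integral, some facet normal $u_{\rho_0}$ would have $b_{\rho_0}\ne 1/k$, so the strict gap $\lfloor nb_{\rho_0}\rfloor<\lceil(n+1)b_{\rho_0}\rceil-1$ holds for infinitely many large $n$. The main obstacle is to convert this numerical gap into an actual lattice point of $\interior((n+1)\P)\setminus n\P$, contradicting the set equality. The plan here is to use full-dimensionality of the facet $F_{\rho_0}$: choose a relative-interior point $y$ of $F_{\rho_0}$, so $\langle y,u_{\rho_0}\rangle=b_{\rho_0}$ while $\langle y,u_\sigma\rangle<b_\sigma$ strictly for $\sigma\ne\rho_0$. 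For large $n$ the slab $\{x\mid \lfloor nb_{\rho_0}\rfloor<\langle x,u_{\rho_0}\rangle\le\lceil(n+1)b_{\rho_0}\rceil-1,\ \langle x,u_\sigma\rangle\le\lfloor nb_\sigma\rfloor\ (\sigma\ne\rho_0)\}$ is a region of volume growing with $n$ (a neighbourhood of $ny$ pushed slightly outward along $u_{\rho_0}$), so it must contain a lattice point, which then lies in $\interior((n+1)\P)$ but not in $n\P$. This contradiction shows every $b_\rho=1/k_\rho$, i.e. $\P$ is dual-integral. I expect this last lattice-point existence step to be the only delicate point; it is exactly where the hypothesis must be used for \emph{all} $n$ (so that arbitrarily large dilates are available), and care is needed to guarantee the slab is genuinely full-dimensional and large enough to capture an integer point.
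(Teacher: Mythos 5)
The paper never proves this statement: \cref{thm:hibi} is quoted from Hibi's article \cite{Hi}, and \cref{rem:hibi} only explains how to translate his original formulation (via Ehrhart--Macdonald reciprocity, \cref{thm:emr}) into the one stated here. So your argument is necessarily a different route from the paper's: a self-contained, elementary proof by direct lattice-point counting. In outline it is correct. The forward direction is complete: after translating the dual-integrality witness to the origin, the facet-by-facet comparison of integral inequalities together with $\lceil (n+1)/k\rceil = \lfloor n/k\rfloor + 1$ even yields the set equality $n\P\cap\ZZ^d = \interior((n+1)\P)\cap\ZZ^d$, which is slightly stronger than the counting identity. In the converse, the upgrade from equal cardinalities to set equality (using $b_\rho>0$), the single-variable characterization of $b=1/k$ (evaluation at $n=0$ and $n=\lceil 1/b\rceil-1$ is a nice touch), and the persistence of gaps for infinitely many large $n$ (periodicity when $b_{\rho_0}$ is rational, equidistribution when it is irrational) are all sound.

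The one step that genuinely needs repair is the one you flag yourself: ``a region of volume growing with $n$ \ldots must contain a lattice point'' is not a valid inference, because your slab has \emph{bounded} thickness (roughly $1$ divided by the length of $u_{\rho_0}$) in the direction of $u_{\rho_0}$; thin regions of arbitrarily large volume can easily avoid $\ZZ^d$. The correct completion works inside an integer level set rather than with volume: the numerical gap supplies an integer $m$ with $nb_{\rho_0} < m < (n+1)b_{\rho_0}$; since $u_{\rho_0}$ is primitive, the functional $x\mapsto\langle x,u_{\rho_0}\rangle$ is surjective onto $\ZZ$, so the affine hyperplane $\langle x,u_{\rho_0}\rangle = m$ meets $\ZZ^d$ in a coset of a fixed rank-$(d-1)$ sublattice whose covering radius $R$ is independent of $m$ and $n$. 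Because $y$ lies in the relative interior of the facet, there is $\epsilon>0$ with $\langle y,u_\sigma\rangle\le b_\sigma-\epsilon$ for all $\sigma\ne\rho_0$, so the slice of your slab at level $m$ contains, inside that hyperplane, a ball of radius growing linearly in $n$ around the projection of $ny$; once that radius exceeds $R$ (plus absolute constants), the slice contains a lattice point, which lies in $\interior((n+1)\P)$ but not in $n\P$, giving the contradiction. With this substitution in place of the volume remark, your proof is complete; it is also arguably more informative than the citation route, since it identifies exactly where each hypothesis (primitivity of the ray generators, availability of arbitrarily large $n$) enters.
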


\begin{rem}\label{rem:hibi}
	This formulation of Hibi's result is not his original formulation but this version shows clearer which beautiful magic is actual happening in the background. First of all, Hibi did not use our notion of dual-integral polytopes. Secondly, for computational purposes it is helpful to notice that by Ehrhart-Macdonald Reciprocity (see \cref{thm:emr}) the condition of Hibi's Theorem is equivalent to the property that 
	\[ L_\P(n) = (-1)^dL_\P(-n-1)  \]
	for all $n \in \NN$\,---\,a condition that can be verified by studying the Ehrhart quasi-polynomial alone. This was his original statement.
\end{rem}

The following consequence is immediate.

\begin{cor}\label{cor:hibi}
	Every dual-integral polytope contains precisely one lattice point in its interior.
\end{cor}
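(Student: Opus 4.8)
The plan is to read the statement straight off Hibi's Theorem (\cref{thm:hibi}) by evaluating its defining identity at the smallest possible dilation factor. Since $\P$ is assumed to be dual-integral, \cref{thm:hibi} guarantees that
\[ \#(n\P\cap\ZZ^d) = \#(\interior(n+1)\P\cap\ZZ^d) \]
holds for every $n\in\NN$. The key observation is that this identity already encodes the desired count once we specialize to $n=0$, so essentially no extra work is required.

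First I would unwind the two sides at $n=0$. On the left, the zeroth dilation $0\cdot\P$ collapses to the single point $\{0\}$, so $\#(0\cdot\P\cap\ZZ^d)=1$ because the origin is a lattice point. On the right, $(0+1)\P=\P$ is full-dimensional, hence $\interior\P$ is its ordinary topological interior and $\#(\interior\P\cap\ZZ^d)$ is exactly the quantity we wish to control. Thus Hibi's identity at $n=0$ reads $1=\#(\interior\P\cap\ZZ^d)$, which is precisely the claim; in combination with \cref{rem:dualfano}, which already supplies at least one interior lattice point, the number is pinned down to exactly one. The only delicate point is the bookkeeping at the boundary value: one must check that $n=0$ is admissible in \cref{thm:hibi} and that the collapse $0\cdot\P=\{0\}$ is handled on the non-interior left-hand side, where it contributes $1$, rather than on the interior right-hand side, where a single point would have empty topological interior and spoil the count.

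Should one prefer to sidestep the $n=0$ convention entirely, the same conclusion follows from the normal form in \cref{cor:dualfano}(i): writing $\P=p+\{x\in\RR^d\mid\langle x,u_\rho\rangle\leq k_\rho^{-1}\}$ with $p\in\interior\P\cap\ZZ^d$ and strictly positive integers $k_\rho$, any interior lattice point $q$ satisfies $\langle q-p,u_\rho\rangle<k_\rho^{-1}\leq 1$ for all $\rho$. Since $q$, $p$ and each $u_\rho$ are integral, the scalar product $\langle q-p,u_\rho\rangle$ is an integer, so the strict inequality forces $\langle q-p,u_\rho\rangle\leq 0$ for every ray $\rho\in\Sigma_\P(1)$; as the rays of the complete normal fan $\Sigma_\P$ positively span $\RR^d$, the dual cone of their positive hull is trivial, whence $q-p=0$ and $q=p$. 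I expect the first route to be the intended one-line argument, and in either case the main (and genuinely mild) obstacle is simply being careful about the interior/boundary convention at the degenerate dilation.
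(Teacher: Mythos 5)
Your first route is precisely the paper's own (implicit) proof: the corollary is presented as an immediate consequence of \cref{thm:hibi}, namely its identity evaluated at $n=0$, where the left side $\#(0\cdot\P\cap\ZZ^d)=1$ forces $\#(\interior\P\cap\ZZ^d)=1$; and your worry about admissibility of $n=0$ is resolved by the paper's convention $0\in\NN$, which it uses explicitly elsewhere (e.g.\ the case $n=0$ in the proof of \cref{thm:vanishing}). Your second, self-contained argument via the normal form of \cref{cor:dualfano} is also correct (the rays of the complete normal fan positively span $\RR^d$, so the integer vector $q-p$ pairing nonpositively with all $u_\rho$ must vanish), but it is extra machinery the paper does not need.
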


From this observation we see that the lattice vectors in \cref{defi:dualfano} are unique. Additionally, the following notation is verified.

\begin{nota}
	Let $\P$ be a dual-integral polytope. The unique interior lattice point in $\P$ will be denoted by $p_\P$.
\end{nota}

We will conclude this section with an interesting little observation that does not generalize to higher dimensions.

\begin{theorem}
	Every two-dimensional quasi-lattice polytope is dual-integral if and only if it contains exactly one interior lattice point.
\end{theorem}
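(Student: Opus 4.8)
The forward implication needs no special hypotheses and is immediate: if $\P$ is dual-integral then it contains exactly one interior lattice point by \cref{cor:hibi}. All the content therefore lies in the converse, and my plan is to verify Hibi's criterion (\cref{thm:hibi}) in the Ehrhart-polynomial reformulation recorded in \cref{rem:hibi}. For a two-dimensional polytope that criterion reads $L_\P(n) = L_\P(-n-1)$ for all $n\in\NN$, and I would establish it as an \emph{identity of genuine polynomials}—which is legitimate precisely because $\P$ is assumed to be a quasi-lattice polytope, so that its Ehrhart quasi-polynomial is an honest polynomial.

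Accordingly, I would write $L_\P(n) = a_2 n^2 + a_1 n + a_0$ with $a_2\neq 0$, the degree being $\dim\P = 2$ by \cref{thm:emr}. First I would pin down the constant term by evaluating at $n=0$: since $0\cdot\P = \{0\}$, we get $a_0 = L_\P(0) = \#(\{0\}\cap\ZZ^d) = 1$. Next I would invoke Ehrhart--Macdonald Reciprocity (\cref{thm:emr}), which in dimension two reads $L_\P(-n) = L_{\interior\P}(n)$, together with the hypothesis that $\P$ has exactly one interior lattice point, i.e. $L_{\interior\P}(1) = 1$. Evaluating at $n=1$ gives $L_\P(-1) = a_2 - a_1 + a_0 = 1$, and with $a_0 = 1$ this forces $a_1 = a_2$. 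Hence $L_\P(n) = a_2\,n(n+1) + 1$, and a direct substitution yields
\[ L_\P(-n-1) = a_2(n+1)n + 1 = L_\P(n), \]
which is exactly the symmetry of \cref{rem:hibi}. Applying \cref{thm:hibi} then shows that $\P$ is dual-integral, completing the converse.

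The computation is short, so the substance is understanding why the argument is confined to dimension two, which is the point of the closing remark in the statement. The single scalar condition ``$\P$ has exactly one interior lattice point'' translates into one linear relation among the Ehrhart coefficients, and in dimension two this is precisely enough to eliminate the unique intermediate coefficient $a_1$ and force the full reciprocal symmetry. In dimension $d\geq 3$ the Ehrhart polynomial carries $d-1$ intermediate coefficients $a_1,\ldots,a_{d-1}$, so one linear constraint cannot determine them all, whereas the Hibi symmetry imposes several independent conditions that a single interior lattice point cannot supply. I expect the quasi-lattice hypothesis to be the other indispensable ingredient: without it the coefficients $a_i$ are merely periodic functions rather than constants, and the value of $L_{\interior\P}$ at a single dilation could never control them. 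The main obstacle is thus conceptual rather than computational—it is the isolation of exactly the two features (dimension two and polynomiality of the Ehrhart function) that make the one-point condition equivalent to dual-integrality.
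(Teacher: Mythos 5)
Your proof is correct and follows essentially the same route as the paper's: both treat the Ehrhart polynomial $a_2n^2+a_1n+a_0$ as an honest polynomial (using the quasi-lattice hypothesis), pin down $a_0=1$ via $L_\P(0)$ and $a_1=a_2$ via reciprocity together with the single interior lattice point, and then verify the symmetry $L_\P(n)=L_\P(-n-1)$ to invoke Hibi's criterion (\cref{thm:hibi}). Your closing discussion of why the argument is special to dimension two is a nice addition, but the core argument is identical.
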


\begin{proof}
	The first direction is clear. So let $\P\subseteq\RR^2$ be a full-dimensional quasi-lattice polytope with $L_\P(-1) = \#(\interior \P\cap\ZZ^2) = 1$. By \cref{thm:emr} we can find rational coefficients $a$, $b$ and $c$ such that the Ehrhart polynomial of $\P$ is given by 
	\[ L_\P(x) = ax^2+bx+c. \]
	Since $L_\P(0) = 1$ we have $c=1$. Additionally, $L_\P(-1) = 1$ implies that $a = b$. So we see that 
	\[ L_\P(x) = ax(x+1) + 1 \]
	for some strictly positive rational coefficient $a$. Hence
	\[ L_\P(-n-1) = a(-n-1)(-n-1+1) + 1 = an(n+1) + 1 = L_\P(n),  \]
	so Hibi's \cref{thm:hibi} implies the claim.
\end{proof}

\section{Embedded Toric Degenerations}\label{sec:degen}

The following definition could be seen as the standard definition of toric degenerations.

\begin{defi}[First try]
	Let $X$ be a normal complex variety. We say that the variety $X$ admits a \textbf{toric degeneration} to the normal toric variety $X_\Sigma$ if there exists a complex variety $\X$ and a flat morphism $\X \to \AA^1$ such that all fibers $\X_t$, $t\in\AA^1$, are isomorphic to $X$ and the special fiber $\X_0$ is isomorphic to $X_\Sigma$.
\end{defi}

However, this definition would be to weak for our applications. So we need to strengthen it as follows. Additionally, we are only interested in the case of projective varieties.

\begin{defi}\label{defi:toricdegen}
	Let $X$ be a normal projective complex variety of dimension $d$ and let $D$ be an ample Cartier Weil divisor on $X$. Let $X_\P$ be the normal projective toric variety associated to the rational convex polytope $\P\subseteq\RR^d$. Let $D_\P$ denote the ample $\QQ$-Cartier $\QQ$-Weil divisor on $X_\P$ associated to $\P$ (see \cref{prop:dpample} for these properties of $D_\P$). We say that the pair $(X,D)$ admits a \textbf{toric degeneration} to the pair $(X_\P, D_\P)$ if there exists a complex variety $\X$ and a morphism 
	\[\pi \colon \X \to \AA^1\]
	such that
	\begin{enumerate}
		\item $\pi$ is projective and flat,
		\item $\X_t \simeq X$ for all $t \neq 0$ and $\X_0 \simeq X_\P$, and
		\item for every $n \in \NN$ there exists a divisorial sheaf $\F^{(n)}$ on $\X$ such that $\F^{(n)}|_{\X_t} \simeq \O_X(nD)$ and $\F^{(n)}|_{\X_0} \simeq \O_{X_\P}(\lfloor n D_\P \rfloor )$.
	\end{enumerate}
	The variety $X_\P$ is called the \textbf{limit} of $X$ under the degeneration $\pi$.
\end{defi}

\begin{rem}
	Whenever we say that {\em a variety admits a toric degeneration}, we mean this in the sense of \cref{defi:toricdegen} and not in the standard sense! We acknowledge that our terminology is not standard. Yet, we will see that basically all known examples of toric degenerations in the usual sense are also toric degenerations in our sense. The main reason behind this change is basically that we want to keep track of the embedding of our varieties. So one could think of our degenerations as {\em embedded toric degenerations}.
\end{rem}

\begin{rem}
	We will use the analogue definition of toric degenerations for pairs $(X, \L)$ where $\L$ is an ample line bundle on the normal projective variety $X$.
\end{rem}

\begin{rem}\label{rem:birational}
	This definition should not come completely unexpected. It has been proven by Anderson that toric degenerations in our sense arise naturally in the setting of Newton-Okounkov bodies (see \cite[Theorem 1, Corollary 2 and Lemma 3]{A}).
	
	In the case of flag varieties there is an even broader formalism. Fang, Fourier and Littelmann constructed toric degenerations via so called {\em birational sequences} in \cite{FaFL}. They proved in \cite[Theorem 6]{FaFL} that their toric degenerations satisfy the properties of \cref{defi:toricdegen}. Additionally, they were able to show that every birational sequence defines a valuation. Hence every toric degeneration via birational sequences can be seen as a toric degeneration associated to a Newton-Okounkov body. It is noticeable that basically all polytopes in representation theory can be constructed via birational sequences and hence are Newton-Okounkov bodies. We will make use of this fact in \cref{sec:flag}.
\end{rem}

\begin{defi}
	Let $\X\to\AA^1$ be a toric degeneration. The variety $\X$ is called the \textbf{degeneration space}. The fiber $\X_0$ is called the \textbf{special fiber} and the fiber $\X_1$\,---\,being isomorphic to any $\X_t$, $t\neq0,$\,---\,is called the \textbf{general fiber}.
\end{defi}

We will collect some useful facts of the degeneration space $\X$. It should be noted that the following three propositions also hold under the usual definition of toric degenerations.

\begin{prop}\label{prop:degenerationspace}
	Let $\X \to \AA^1$ be a toric degeneration in the sense of \cref{defi:toricdegen}. Then $\X$ is quasi-projective, normal and has rational singularities.
\end{prop}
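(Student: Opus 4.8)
Let $\X \to \AA^1$ be a toric degeneration in the sense of Definition 3.15. Then $\X$ is quasi-projective, normal and has rational singularities.

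The plan is to verify the three asserted properties one at a time, deducing each from the defining data of the degeneration. First I would establish \emph{quasi-projectivity}: the morphism $\pi\colon \X \to \AA^1$ is projective by part (i) of \cref{defi:toricdegen}, and $\AA^1$ is affine, hence quasi-projective; since a projective morphism onto a quasi-projective base has quasi-projective total space, $\X$ is quasi-projective. More concretely, projectivity of $\pi$ means it factors as a closed immersion $\X \hookrightarrow \mathbb{P}^N_{\AA^1}$ followed by the projection to $\AA^1$, and composing with $\mathbb{P}^N_{\AA^1} \to \mathbb{P}^N \times \AA^1$ exhibits $\X$ as a locally closed subvariety of a quasi-projective variety.

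Next I would treat \emph{normality}. The strategy here is to use flatness together with normality of the fibers. The generic fiber $\X_t \simeq X$ is normal by hypothesis (we assume $X$ is a normal projective variety), and the special fiber $\X_0 \simeq X_\P$ is a normal toric variety by construction. Since $\pi$ is flat, I would invoke the standard descent-of-normality result for flat morphisms whose base is regular and whose fibers are normal: if $\pi\colon \X \to S$ is flat of finite type with $S$ regular (here $S = \AA^1$ is smooth) and every fiber is normal, then $\X$ is normal. The cleanest formulation is via Serre's criterion $(R_1) + (S_2)$, checking that these conditions propagate from the fibers to the total space under flatness over a regular base. One must be mildly careful that the fibers over \emph{all} closed points $t \neq 0$ are normal (they are all isomorphic to $X$), so the fiberwise hypothesis holds over a dense open, and the remaining fiber $\X_0$ is normal as well; combined with flatness over the one-dimensional regular base this suffices.

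Finally, the property I expect to be the main obstacle is that $\X$ has \emph{rational singularities}. The two fibers we control have rational singularities — $X_\P$ is a normal toric variety, which always has rational singularities by a classical result, and one typically assumes or can arrange that $X$ has rational singularities as well (indeed in the applications $X$ is Gorenstein Fano with rational singularities). The delicate point is that rational singularities need \emph{not} in general be inherited by the total space of a flat family merely from the fibers; one needs a deformation-theoretic input. Here I would appeal precisely to the results of Elkik \cite{E} on rational singularities in flat families, which the introduction already flags as a key ingredient: Elkik proves that if the special fiber of a flat family over a smooth curve has rational singularities, then the total space has rational singularities in a neighborhood of that fiber, and likewise the property is open on the base. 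Thus from rationality of the special fiber $\X_0 \simeq X_\P$ (toric, hence rational) together with normality and flatness, Elkik's theorem yields rational singularities for $\X$ itself. The forward-looking caution is that one must confirm Elkik's hypotheses apply in our setting — normality of $\X$ (established above) and flatness of $\pi$ over the regular base $\AA^1$ — after which the conclusion follows directly.
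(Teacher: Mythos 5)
Your treatment of quasi-projectivity and normality matches the paper's proof: quasi-projectivity because $\pi$ is a projective morphism over the quasi-projective base $\AA^1$, and normality via Serre's criterion from flatness together with normality of every fiber (the paper cites \cite[Th\'eor\`eme 5.8.6]{EGAIV2} for exactly this).

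The gap is in the rational singularities step. The Elkik statement you invoke gives rational singularities of $\X$ only \emph{in a neighborhood of the special fiber} $\X_0$; by properness of $\pi$ such a neighborhood contains $\pi^{-1}(U)$ for some open $0 \in U \subseteq \AA^1$, but this says nothing about the points of $\X$ lying over $t \notin U$, and you conclude ``rational singularities for $\X$ itself'' without addressing them. Appealing to openness cannot close this hole by itself: openness only ever produces an open subset of the base, never all of $\AA^1$. The missing ingredient---which is the actual content of the paper's argument---is that all fibers over $t \neq 0$ are isomorphic to one another (all $\simeq X$). The paper proceeds as follows: by \cite[Th\'eor\`eme 4]{E}, the \emph{fibers} $\X_t$ for $t$ in some open $U \ni 0$ have rational singularities; since every fiber over $t \neq 0$ is isomorphic to $X$, it follows that \emph{all} fibers have rational singularities; and then a second result of Elkik, \cite[Th\'eor\`eme 5]{E} (a flat family all of whose fibers have rational singularities has total space with rational singularities), applies globally to $\X$. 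Equivalently, with your version of Elkik's theorem you could finish by re-applying the deformation statement over every point $t \in \AA^1$, once each fiber $\X_t$ is known to have rational singularities---but that propagation step via the isomorphism of general fibers must be made, and it is absent from your write-up. A secondary issue: your parenthetical ``one typically assumes or can arrange that $X$ has rational singularities'' runs against the paper's logic. No such hypothesis on $X$ is made here; rational singularities of $X$ is a \emph{consequence} of the degeneration via Elkik's theorem, and it is used precisely in that direction in the proof of \cref{thm:main}, so assuming it would be circular in the paper's framework (your main line of argument, starting from the toric special fiber, fortunately does not need it).
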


\begin{proof}
	$\X$ is clearly quasi-projective since it is the domain of a projective morphism. Normality follows directly from Serre's criterion of normality since every fiber $\X_t$, $t\in\RR$, is normal (see \cite[Théorème 5.8.6]{EGAIV2}). Rationality of singularities follows from the fact that normal toric varieties have rational singularities (see \cite[Theorem 11.4.2]{CLS}). So by a result of Elkik (see \cite[Théorème 4]{E}) there exists an open neighborhood $0\in U\subseteq \AA^1$ such that all fibers $\X_t$, $t\in U$, have rational singularities. But all fibers are isomorphic (except for the special fiber), hence all fibers have rational singularities. This implies that $\X$ has rational singularities by another result of Elkik (see \cite[Théorème 5]{E}).
\end{proof}

\begin{prop}\label{prop:x0}
	Let $\X \to \AA^1$ be a toric degeneration in the sense of \cref{defi:toricdegen}. Then each fiber $\X_t$ is a principal prime divisor on $\X$.
\end{prop}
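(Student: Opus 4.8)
The plan is to exhibit each fiber as the divisor of a single regular function on $\X$, namely the pullback of an affine coordinate on the base, and then to verify that this function vanishes to order exactly one along the fiber.

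First I would fix a point $a\in\AA^1$ and set $f := \pi^*(t-a)$, where $t$ is the coordinate on $\AA^1 = \mathrm{Spec}\,\CC[t]$. Since $\pi$ is flat, it is dominant, so $f$ is a nonzero regular\,---\,hence rational\,---\,function on $\X$, and its zero scheme is exactly the scheme-theoretic fiber $\X_a = V(f)$. By \cref{prop:degenerationspace} the space $\X$ is normal, and as a variety it is integral; moreover flatness of $\pi$ over the smooth curve $\AA^1$ forces $\dim\X = d+1$ with every fiber of pure dimension $d$, so $\X_a$ has codimension one. By condition (ii) of \cref{defi:toricdegen}, $\X_a$ is isomorphic either to $X$ (if $a\neq0$) or to $X_\P$ (if $a=0$); in both cases it is irreducible and reduced, so its support $Y_a := (\X_a)_{\mathrm{red}} = \X_a$ is a prime divisor on $\X$.

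It then remains to compute $\div(f)$. Since $V(f)=\X_a=Y_a$ is irreducible with generic point $\eta$, normality of $\X$ makes $\O_{\X,\eta}$ a discrete valuation ring with associated valuation $v = \mathrm{ord}_{Y_a}$, whence $\div(f) = v(f)\,Y_a$. The hard part is pinning down the multiplicity $v(f)$: I would argue that because the scheme-theoretic fiber $\X_a = V(f)$ is reduced, its local ring at the generic point $\O_{\X,\eta}/(f)$ is a reduced zero-dimensional local ring, hence a field, which forces $(f)$ to be the maximal ideal of the DVR $\O_{\X,\eta}$, i.e. $v(f)=1$. Consequently $\div(f) = Y_a = \X_a$, exhibiting each fiber as a principal prime divisor.

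I expect this multiplicity-one step to be the only substantive point: irreducibility, codimension, and principality up to a multiple are immediate from the normality of $\X$ and the isomorphism type of the fibers, but to conclude that the fiber itself\,---\,and not a positive multiple of it\,---\,is principal, one must genuinely use that the isomorphisms $\X_a\simeq X$ and $\X_0\simeq X_\P$ hold at the level of schemes, so that the fibers are reduced.
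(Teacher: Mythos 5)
Your proof is correct and takes essentially the same route as the paper: codimension one from flatness over $\AA^1$, irreducibility and reducedness from the isomorphisms $\X_t\simeq X$ and $\X_0\simeq X_\P$, and principality by exhibiting the fiber as the divisor of the pulled-back coordinate function $\pi - a$. The only difference is that you spell out the multiplicity-one step (via the DVR at the generic point and reducedness of the scheme-theoretic fiber), a point the paper's proof asserts without comment when it writes $\X_t = \div(\pi - t)$.
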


\begin{proof}
	By \cite[Chapter III, Corollary 9.6]{Ha} we know that $\X_t$ is a subvariety of codimension 1. The irreducibility is clear since $\X_t \simeq X$ or $\X_t \simeq X_\Sigma$. Notice that $\pi\in\O_\X(\X) \subseteq \CC(\X)$, so $\pi$ is a rational function on $\X$ and $\X_t = \div(\pi-t)$ is principal.
\end{proof}

\begin{prop}\label{prop:clx}
	Let $\X \to \AA^1$ be a toric degeneration in the sense of \cref{defi:toricdegen} and let $X$ be isomorphic to the general fiber. Then for any $t\neq0$ the restriction map $\D\mapsto \D|_{\X_t}$ induces an isomorphism of divisor class groups
	\[\Cl(\X) \simeq \Cl(X).\]
\end{prop}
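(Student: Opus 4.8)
The plan is to compare $\Cl(\X)$ with the class group of a single fiber by two excision steps in the sense of \cite[Proposition II.6.5]{Ha}, together with the homotopy invariance $\Cl(Y\times\AA^1)\cong\Cl(Y)$ of \cite[Proposition II.6.6]{Ha}. Throughout, $\Cl$ is defined because $\X$ and each fiber are normal (\cref{prop:degenerationspace}), hence regular in codimension one. Write $U:=\pi^{-1}(\AA^1\setminus\{0\})$ for the punctured degeneration space.

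First I would excise the special fiber. By \cref{prop:x0} the fiber $\X_0=\div(\pi)$ is a principal prime divisor on $\X$, so the excision sequence of \cite[Proposition II.6.5]{Ha} reads
\[ \ZZ\cdot[\X_0]\longrightarrow\Cl(\X)\longrightarrow\Cl(U)\longrightarrow 0, \]
where the first arrow sends $1\mapsto[\X_0]$ and the second is restriction. Since $[\X_0]=0$ in $\Cl(\X)$, the restriction map $\Cl(\X)\to\Cl(U)$ is an isomorphism.

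It then remains to identify $\Cl(U)$ with $\Cl(X)$ compatibly with restriction to a fixed fiber $\X_t$, $t\neq0$. Over $\AA^1\setminus\{0\}\cong\operatorname{Spec}\CC[t,t^{-1}]$ the degeneration is a product, i.e. $U\cong X\times(\AA^1\setminus\{0\})$; granting this, a further excision removes the principal divisor $X\times\{0\}$ from $X\times\AA^1$ to give $\Cl(U)\cong\Cl(X\times\AA^1)$, and \cite[Proposition II.6.6]{Ha} yields $\Cl(X\times\AA^1)\cong\Cl(X)$, the inverse being restriction to the slice $X\times\{t\}=\X_t$. Chaining the isomorphisms shows that $\D\mapsto\D|_{\X_t}$ is an isomorphism $\Cl(\X)\cong\Cl(X)$.

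The step that genuinely requires care\,---\,and is the main obstacle\,---\,is the triviality $U\cong X\times(\AA^1\setminus\{0\})$, since \cref{defi:toricdegen} only records that the fibers $\X_t$ are abstractly isomorphic to $X$ for $t\neq0$, not that the family is a product. I would extract this from the concrete construction underlying the degenerations we use: in Anderson's Rees-algebra picture (see \cref{rem:birational} and \cite{A}) the relevant coordinate algebra localizes to $R[t,t^{-1}]$ away from $t=0$, so $U$ is literally the constant family $X\times(\AA^1\setminus\{0\})$. Alternatively, one can try to bypass triviality by showing directly that restriction $\Cl(U)\to\Cl(\X_t)$ is an isomorphism\,---\,surjectivity by spreading out a prime divisor of $\X_t$ to its closure in $U$, and injectivity via the principality of $\X_t=\div(\pi-t)$ in $U$\,---\,but verifying that the spread-out divisor restricts back to the given one, rather than to a larger cycle, is precisely where the product structure (or sufficient regularity of $U$ along $\X_t$) must be invoked.
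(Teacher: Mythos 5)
Your proposal follows essentially the same route as the paper's own proof: the same two excision steps via \cite[Proposition 6.5]{Ha} at the principal prime divisors $\X_0 \subseteq \X$ and $X\times\{0\} \subseteq X\times\AA^1$, chained with the homotopy invariance $\Cl(X\times\AA^1)\simeq\Cl(X)$ of \cite[Proposition 6.6]{Ha}. If anything you are more careful than the paper, which simply asserts the identification $\X\setminus\X_0 \simeq (X\times\AA^1)\setminus(X\times\{0\})$, whereas you correctly single out this triviality of the family away from $t=0$ as the step that does not follow from the fibers being abstractly isomorphic (\cref{defi:toricdegen}) and must be supplied by the underlying construction, e.g. Anderson's Rees-algebra picture.
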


\begin{proof}
	Consider the variety $X\times\AA^1$ and the natural projection $\tau\colon X\times\AA^1\to \AA^1$. Let $Z := \tau^{-1}(0)$.	
	Then the open set $U := \X\setminus\X_0$ is isomorphic to the open set $V := (X\times\AA^1 )\setminus Z$.
	
	By \cref{prop:x0} we know that the special fiber $\X_0$ is a principal prime divisor on $\X$. By the same arguments $Z$ is a principal prime divisor on $X\times\AA^1$. So repeated application of \cite[Propositions 6.5 and 6.6]{Ha} gives a sequence of isomorphisms
	\begin{align*} \Cl(\X) \to \Cl(\X)/\ZZ\lbrack\X_0\rbrack \to &\Cl(U)\\ &\downarrow\\ &\Cl(V) \to \Cl (X\times\AA^1)/\ZZ\lbrack Z\rbrack \to \Cl(X\times\AA^1) \to \Cl(X), \end{align*}
	where each step is either a restriction, identity or induced by the isomorphism between $X$ and the general fiber.	
\end{proof}

Although we allowed our limit divisor $D_\P$ to be a $\QQ$-Weil divisor, we will now prove that this divisor will in fact be integral if the Weil divisor on our original variety was Cartier. This in return restricts the polytopes that could appear in our setting. We will try to explain these restrictions at the end of this section.

\begin{theorem*}[\cref{thm:dpweil}]
	Let $X$ be a normal projective complex variety of dimension $d$ and let $D$ be an ample Cartier divisor on $X$. Let $X_\P$ be the normal projective toric variety associated to the rational convex polytope $\P\subseteq\RR^d$. Suppose the pair $(X, D)$ admits a toric degeneration to the toric pair $(X_\P, D_\P)$. Then $D_\P$ is a Weil divisor.
\end{theorem*}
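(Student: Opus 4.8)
The plan is to compare the divisorial sheaves $\F^{(n)}$ on the total space $\X$ with the reflexive powers of $\F^{(1)}$, transport the resulting relation to the special fibre through the divisor class groups, and finally turn a linear equivalence of torus-invariant divisors on $X_\P$ into an honest equality of coefficients using completeness of $X_\P$. It is worth stressing that Euler-characteristic invariance by itself is \emph{not} enough: combined with \cref{thm:toricehrhart} it only yields that $\P$ is a quasi-lattice polytope (this is \cref{cor:twopolynomials}), and by \cref{conj:quasilattice} this is conjecturally, but not provably, equivalent to $D_\P$ being Weil. So the Cartier hypothesis on $D$ and the sheaf $\F^{(1)}$ must enter in an essential way. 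As a first step I would show that $\F^{(n)}\cong(\F^{(1)})^{[n]}$ as divisorial sheaves on $\X$, where $(-)^{[n]}$ denotes the reflexive $n$-th power. Because $D$ is Cartier, $\O_X(nD)\cong\O_X(D)^{\otimes n}$; since $\F^{(1)}$ is locally free on $\X\setminus\X_0\cong X\times(\AA^1\setminus\{0\})$ (its class there is the Cartier class of $D$), the power $(\F^{(1)})^{[n]}$ restricts on every general fibre to $\O_X(nD)$, exactly as $\F^{(n)}$ does. Both sheaves thus have classes in $\Cl(\X)$ that restrict to the same class on a general fibre, and since \cref{prop:clx} gives an isomorphism $\Cl(\X)\xrightarrow{\sim}\Cl(X)$, the two classes coincide, so $\F^{(n)}\cong(\F^{(1)})^{[n]}$.

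Next I would restrict to the special fibre. By \cref{prop:x0} the fibre $\X_0=\div(\pi)$ is a principal, hence Cartier, prime divisor, so multiplication by $\pi$ is injective on the torsion-free sheaf $\F^{(n)}$, and $\F^{(n)}|_{\X_0}\cong\F^{(n)}/\pi\F^{(n)}$ is a rank-one torsion-free, hence divisorial, sheaf on the normal variety $X_\P$. The key step is to prove that restriction to the Cartier divisor $\X_0$ sends the class $n[\F^{(1)}]$ to $n[\F^{(1)}|_{\X_0}]$ in $\Cl(X_\P)$; equivalently, that restriction to $\X_0$ commutes with reflexive powers at the codimension-one points $\eta_\rho$ of $X_\P$, the generic points of the torus-invariant divisors $D_\rho$. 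Granting this and using that $\F^{(n)}|_{\X_0}\cong\O_{X_\P}(\lfloor nD_\P\rfloor)$ and $\F^{(1)}|_{\X_0}\cong\O_{X_\P}(\lfloor D_\P\rfloor)$ by the definition of the degeneration, we obtain
\[\lfloor nD_\P\rfloor\sim n\lfloor D_\P\rfloor\quad\text{on }X_\P\text{ for all }n\in\NN.\]

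Finally I would extract integrality from this equivalence. Writing $D_\P=\sum_\rho b_\rho D_\rho$, setting $\{b_\rho\}:=b_\rho-\lfloor b_\rho\rfloor$, and using $\lfloor nb_\rho\rfloor=n\lfloor b_\rho\rfloor+\lfloor n\{b_\rho\}\rfloor$, the divisor $\lfloor nD_\P\rfloor-n\lfloor D_\P\rfloor=\sum_\rho\lfloor n\{b_\rho\}\rfloor D_\rho$ is \emph{effective} (all coefficients are nonnegative) and linearly equivalent to zero. Since $X_\P$ is projective, the only effective divisor linearly equivalent to zero is the zero divisor, so $\lfloor n\{b_\rho\}\rfloor=0$ for every ray $\rho$ and every $n\in\NN$. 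If some $\{b_\rho\}>0$, choosing $n$ with $n\{b_\rho\}\geq1$ gives $\lfloor n\{b_\rho\}\rfloor\geq1$, a contradiction; hence $\{b_\rho\}=0$ and $b_\rho\in\ZZ$ for all $\rho$, so $D_\P$ is a Weil divisor.

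The main obstacle is the compatibility claim in the second paragraph. In general the restriction of the reflexive power $(\F^{(1)})^{[n]}$ to $\X_0$ need not agree with the reflexive power of the restriction $\F^{(1)}|_{\X_0}$, the discrepancy being supported where $\F^{(1)}$ fails to be locally free. This non-locally-free locus lies in $\X_0$ and has codimension at least two in $\X$ (since $\F^{(1)}$ is invertible on $\X\setminus\X_0$), so a priori it can meet $X_\P$ in codimension one, precisely at some of the points $\eta_\rho$. Ruling out any resulting discrepancy in the divisor class requires a local analysis at each $\eta_\rho$, exploiting that $\X_0$ is cut out by the single nonzerodivisor $\pi$ and that $\X$ is normal with rational singularities (\cref{prop:degenerationspace}); this is where the bulk of the technical work lies.
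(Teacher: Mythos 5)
Your proposal reconstructs the paper's proof essentially step for step, and the one step you leave open is the one step the paper itself asserts without argument. Your first step is exactly the paper's: there one writes $\F^{(n)}\simeq\O_\X(\D_n)$, uses the Cartier hypothesis to get $\D_n|_{\X_t}\sim n\D_1|_{\X_t}$ on a general fibre, and applies \cref{prop:clx} to conclude $\D_n\sim n\D_1$, which is your $\F^{(n)}\cong(\F^{(1)})^{[n]}$. Your endgame is a streamlined variant of the paper's: where you note that $\lfloor nD_\P\rfloor-n\lfloor D_\P\rfloor=\sum_\rho\lfloor n\{b_\rho\}\rfloor D_\rho$ is effective and principal, hence zero, the paper instead picks $l$ with $lD_\P$ integral, writes $\lfloor D_\P\rfloor=E+\div(f)$ and $lD_\P=lE+\div(g)$, observes that $\div(f^{-l}g)$ is effective, and concludes $g=\lambda f^l$ because regular functions on the projective normal variety $X_\P$ are constant\,---\,the same constancy argument you use, just deployed for two exponents rather than all $n$. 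Both endgames are correct and interchangeable.

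The step you flag as the main obstacle\,---\,that restriction to $\X_0$ sends $n[\F^{(1)}]$ to $n[\F^{(1)}|_{\X_0}]$ in $\Cl(X_\P)$\,---\,is precisely where the paper's proof is silent. After setting $\D:=\D_1$, the paper defines $E:=\D|_{\X_0}$ and passes directly from $\F^{(n)}|_{\X_0}\simeq\O_{X_\P}(\lfloor nD_\P\rfloor)$ to the displayed equivalence $\lfloor nD_\P\rfloor\sim nE$; this tacitly identifies $\O_\X(n\D)|_{\X_0}$ with $\O_{X_\P}\bigl(n(\D|_{\X_0})\bigr)$, which is exactly the compatibility of restriction with reflexive powers that you isolate, and your description of where it could fail (the non-locally-free locus of $\F^{(1)}$, which sits inside $\X_0$ with codimension at least two in $\X$, meeting $X_\P$ at some of the codimension-one points $\eta_\rho$) is accurate. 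So you have not missed an idea that the paper supplies: the local analysis you call for is absorbed, without proof, into that one equivalence in the paper. Your framing remarks are also correct\,---\,the Euler-characteristic route only yields \cref{cor:degenerationpolynomial}, and upgrading quasi-lattice to Weil is exactly the open \cref{conj:quasilattice}\,---\,so the Cartier hypothesis and the class-group transport really are the essential inputs, as you say.
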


\begin{proof}
	Notice that $\F^{(n)}$ is divisorial for every $n\in\NN$, so there exist divisors $\D_n$ on the degeneration space $\X$, $n\in\NN$, such that $\F^{(n)}\simeq\O_\X(\D_n)$. Fix $t\neq0$ and $n\in\NN$. Since $D$ is Cartier, we know that 
	\[\F^{(n)}|_{\X_t} \simeq \O_X(nD) \simeq \O_X(D)^{\otimes n} \simeq (\F^{(1)})^n|_{\X_t}.\]
	Translated to divisors this means 
	\[ \D_n|_{\X_t} \sim n\D_1|_{\X_t}.\]
	By \cref{prop:clx} this is only possible if $\D_n \sim n\D_1$.
	
	So there exists a Weil divisor $\D := \D_1$ on $\X$ such that $\F^{(n)} \simeq \O_\X(n\D)$ for every $n\in\NN$. Let $E$ denote the divisor on $X_\P$ that is isomorphic to $\D|_{\X_0}$ via the isomorphism $\X_0\simeq X_\P$.
	
	Since $\F^{(n)}|_{\X_0}\simeq\O_{X_\P}(\lfloor nD_\P \rfloor)$ for every $n\in\NN$, we have 
	\[ \lfloor nD_\P \rfloor \sim nE  \]
	for every $n\in\NN$. Chose $l\in\NN$ such that $lD_\P$ is Weil. Then
	\[ lD_\P \sim lE\]
	i.e. there exist rational functions $f,g\in\CC(X_\P)^\times$ such that 
	\[ \lfloor D_\P \rfloor = E+\div(f) \hspace{10pt}\text{ and }\hspace{10pt} lD_\P = lE + \div(g).    \]
	
	We want to show that $g = \lambda f^l$ for some $\lambda\in\CC$. Notice that 
	\[ D_\P = E + l^{-1}\div(g), \]
	hence \[E+\div(f) = \lfloor D_\P\rfloor = \lfloor E+l^{-1}\div(g) \rfloor = E+\lfloor l^{-1}\div(g)\rfloor.\]	
	So in conclusion we know that $\div(f) = \lfloor l^{-1}\div(g)\rfloor$. Since every coefficient of $\lfloor l^{-1}\div(g)\rfloor $ is smaller than or equal to the corresponding coefficient of $l^{-1}\div(g)$, we see that $l^{-1}\div(g)-\div(f)$ is an effective divisor. Equivalently, the difference $\div(g) - l\div(f) = \div(f^{-l}g)$ is an effective divisor.
	
	But this implies that $f^{-l}g$ is a rational function on $X_\P$ whose valuation $v_{D_i}(f^{-l}g)$ on every prime divisor $D_i$ is non-negative. This is only possible if $f^{-l}g$ is regular. But since $X_\P$ is projective, every regular function is constant, so there exist $\lambda \in\CC^\times$ such that $f^{-l}g=\lambda$\,---\,or equivalently $g = \lambda f^l$.
	
	In conclusion we see that $\div(g)=\div(\lambda f^l)=\div(f^l)$ and thus
	\[D_\P = E+l^{-1}\div(g) = E+l^{-1}\div(f^l) = E+\div(f)\]
	is a Weil divisor on $X_\P$.
\end{proof}

Let us now prove an important consequence of \cref{thm:toricehrhart}.

\begin{theorem}\label{thm:twopolynomials}
	Let $X$ be a normal projective complex Gorenstein variety of dimension $d$ and let $\L$ be an ample line bundle over $X$. Let $\P \subseteq\RR^d$ be a full-dimensional rational convex polytope. Suppose that the pair $(X, \L)$ admits a toric degeneration to the toric variety $(X_\P, D_\P)$. Then
	\[ \chi(X, \L^n) = \#(n\P\cap \ZZ^d) \hspace{10pt}\text{ and }\hspace{10pt} \chi(X, \L^n\otimes\omega_X) = \#(\interior n\P \cap \ZZ^d)\]
	for all $n \in \NN$.
\end{theorem}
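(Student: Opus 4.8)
The plan is to prove both equalities by transporting the Euler characteristics computed on $X$ to the special fibre of the degeneration and then invoking \cref{thm:toricehrhart} on the toric side. Throughout I write $\L = \O_X(D)$ for an ample Cartier divisor $D$ and let $\pi\colon\X\to\AA^1$ be the degeneration with divisorial sheaves $\F^{(n)} = \O_\X(\D_n)$ supplied by \cref{defi:toricdegen}.

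First I would dispose of the left-hand equality, which is essentially immediate. Each $\F^{(n)}$ is a divisorial, hence torsion-free, sheaf on the integral scheme $\X$; since its only associated point dominates $\AA^1$, it is flat over the smooth curve $\AA^1$. Invariance of Euler characteristic in flat projective families therefore yields
\[\chi(X,\L^n) = \chi(\X_1, \F^{(n)}|_{\X_1}) = \chi(\X_0, \F^{(n)}|_{\X_0}) = \chi(X_\P, \O_{X_\P}(\lfloor nD_\P\rfloor)),\]
and the first formula of \cref{thm:toricehrhart} identifies the last term with $\#(n\P\cap\ZZ^d)$.

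The second equality requires a family sheaf that restricts to $\L^n\otimes\omega_X$ on the general fibre. By \cref{prop:degenerationspace} the space $\X$ is normal with rational singularities, hence Cohen--Macaulay, so it carries a dualizing sheaf $\omega_\X = \O_\X(K_\X)$; as $\omega_{\AA^1}$ is trivial this is also the relative dualizing sheaf of $\pi$. I would set $\G^{(n)} := \O_\X(\D_n + K_\X)$. Each fibre $\X_t$ is a principal, hence Cartier, divisor on $\X$ (\cref{prop:x0}), so adjunction gives $\omega_\X|_{\X_t} \simeq \omega_{\X_t}$, and restricting the divisor $\D_n + K_\X$ along $\X_t$ yields $\G^{(n)}|_{\X_t} \simeq \O_X(nD)\otimes\omega_X \simeq \L^n\otimes\omega_X$ for $t\neq 0$ and $\G^{(n)}|_{\X_0} \simeq \O_{X_\P}(\lfloor nD_\P\rfloor + K_{X_\P})$, using $\omega_X = \O_X(K_X)$ and $\omega_{X_\P} = \O_{X_\P}(K_{X_\P})$. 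The crucial point is that, because $D$ is Cartier, \cref{thm:dpweil} guarantees that $D_\P$ is a genuine Weil divisor; hence $\lfloor nD_\P\rfloor = \lceil nD_\P\rceil = nD_\P$, and the special-fibre restriction coincides with the sheaf appearing in the second formula of \cref{thm:toricehrhart}. Applying invariance of Euler characteristic to the flat sheaf $\G^{(n)}$ and then \cref{thm:toricehrhart} gives
\[\chi(X,\L^n\otimes\omega_X) = \chi(X_\P, \O_{X_\P}(\lceil nD_\P\rceil + K_{X_\P})) = \#(\interior n\P\cap\ZZ^d).\]

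The step I expect to be most delicate is the identification of $\G^{(n)}|_{\X_0}$. Two things must be checked with care: that the dualizing sheaf of $\X$ restricts to the dualizing sheaf of each fibre (adjunction for the principal Cartier divisor $\X_t$ on the Cohen--Macaulay space $\X$), and that restriction of the divisorial sheaf $\O_\X(\D_n + K_\X)$ to $\X_0$ really produces $\O_{X_\P}(\lfloor nD_\P\rfloor + K_{X_\P})$ rather than a reflexive correction term---for which one restricts at the level of divisors, using that $\X_0$ is not a component of the chosen representatives. It is precisely here that \cref{thm:dpweil} does the decisive work: without the integrality of $D_\P$ the floor produced by the degeneration and the ceiling demanded by \cref{thm:toricehrhart} would disagree, and the second identity would fail. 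Everything else---flatness over $\AA^1$ and the toric vanishing theorems---is already packaged in the cited results. It is worth emphasising that this argument deliberately avoids Serre duality on $X$, so that, reading it together with Ehrhart--Macdonald Reciprocity, one recovers the Serre-duality relation for $X$ purely from lattice-point counting.
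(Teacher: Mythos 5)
Your proposal is correct and follows essentially the same route as the paper: invoke \cref{thm:dpweil} to make $D_\P$ Weil (so the floor/ceiling discrepancies vanish), use invariance of Euler characteristic in the flat projective family together with \cref{thm:toricehrhart} for the first equality, and twist by $K_\X$ and apply adjunction along the principal fibres (using that $K_X$ is Cartier) for the second. The only additions beyond the paper's argument are the explicit verification that the divisorial sheaves are flat over $\AA^1$ and the Cohen--Macaulay/dualizing-sheaf discussion, both of which the paper leaves implicit in its citation of \cite[Theorem 24.7.1]{V}.
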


\begin{proof}
	By \cref{thm:dpweil} we know that $D_\P$ is a Weil divisor on $X_\P$.
	
	By our assumption there exist divisorial sheaves $\F^{(n)}$ for every $n\in\NN$ such that 
	\[ \F^{(n)}|_{\X_t} \simeq \L^n \text{ for all } t\neq0 \text{ and } \F^{(n)}|_{\X_0} \simeq \O_{X_\P}(nD_\P).\]
	By \cref{thm:toricehrhart} we have
	\[ \#(n\P\cap\ZZ^d) = \chi(X_\P, \O_{X_\P}(nD_\P)).\]
	Since Euler characteristic is constant in flat projective families (see for example \cite[Theorem 24.7.1]{V}), we conclude
	\[ \#(n\P\cap\ZZ^d) = \chi(X_\P, \O_{X_\P}( nD_\P)) = \chi(X, \L^n)\]
	for every $n\in\NN$.	
	
	For the second set of equations let $\D$ denote the Weil divisor from the proof of \cref{thm:dpweil}, i.e. $\F^{(n)} = \O_\X(n\D)$ for all $n\in\NN$. Consider the divisorial sheaves $\O_\X(n\D+K_\X)$ for each $n\in\NN$. Recall that each fiber $\X_t$ is a principle prime divisor on $\X$ by \cref{prop:x0}. So the Adjunction Formula (see for example \cite[30.4.8]{V}) yields
	\[ \O_\X(n\D+K_\X)|_{\X_t}\simeq \O_{\X_t}((n\D+K_\X+\X_t)|_{\X_t}) \simeq \begin{cases}
	\O_{X_\P}(nD_\P+K_{X_\P})\text{ if } t=0,\\
	\L^n\otimes\omega_X \text{ else}
	\end{cases} \]
	since $K_X$ is Cartier. The claim follows from the same reasons as before.
\end{proof}

There exists a different formulation of this theorem.

\begin{theorem*}[\cref{cor:twopolynomials}]
	Let $X$ be a normal projective complex Gorenstein variety of dimension $d$ and let $\L$ be an ample line bundle over $X$. Let $\P \subseteq\RR^d$ be a full-dimensional rational convex polytope. Suppose that the pair $(X, \L)$ admits a toric degeneration to the toric variety $(X_\P, D_\P)$. Then the Ehrhart quasi-polynomial $L_\P$ of $\P$ coincides with the Hilbert polynomial $P_\L(n) := \chi(X, \L^n)$ of $X$ and $\L$, i.e.
	\[ L_\P(n) = P_\L(n) \]
	for all $n \in \ZZ$.
\end{theorem*}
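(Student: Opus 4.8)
The plan is to deduce this statement directly from \cref{thm:twopolynomials}, using only the elementary rigidity of quasi-polynomials to pass from an identity on $\NN$ to an identity on all of $\ZZ$. Essentially all of the geometric substance has already been packaged into \cref{thm:twopolynomials}, so what remains is bookkeeping together with one small observation about (quasi-)polynomials.

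First I would recall that the Hilbert polynomial $P_\L(n) = \chi(X, \L^n)$ is a genuine polynomial in $n$ of degree $d = \dim X$; this is the standard fact that the Euler characteristic of the powers of a line bundle depends polynomially on the exponent, with ampleness of $\L$ forcing the degree to be exactly $d$. By the first equation of \cref{thm:twopolynomials} one then has
\[ P_\L(n) = \chi(X, \L^n) = \#(n\P\cap\ZZ^d) = L_\P(n) \]
for every $n\in\NN$, so the Hilbert polynomial and the Ehrhart quasi-polynomial already agree on the whole of $\NN$.

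The one point that requires a little care is upgrading this agreement on $\NN$ to an identity valid for every $n\in\ZZ$, since two arbitrary functions coinciding on $\NN$ need not coincide on $\ZZ$. Here I would invoke the quasi-polynomial structure guaranteed by \cref{thm:emr}: write $L_\P$ via constituent polynomials $f_1, \ldots, f_T$ with $L_\P(n) = f_i(n)$ whenever $n\equiv i \bmod T$. Fixing a residue $i$, the polynomials $f_i$ and $P_\L$ coincide on the infinite arithmetic progression $\{ n\in\NN \mid n\equiv i \bmod T\}$, and a polynomial is determined by its values on any infinite set, so $f_i = P_\L$ as polynomials. As this holds for every residue $i$, the quasi-polynomial $L_\P$ is in fact the single polynomial $P_\L$; in particular $\P$ is a quasi-lattice polytope and $L_\P(n) = P_\L(n)$ for all $n\in\ZZ$, which is exactly the assertion.

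The hard part, to the extent there is one, is thus only this final rigidity step; there is no genuine obstacle beyond it, because the geometric input (invariance of Euler characteristic in the flat family and the toric lattice-point count) was already carried out in \cref{thm:twopolynomials}. It is worth emphasizing in the write-up that it is precisely the quasi-polynomiality of $L_\P$ — not merely its values on $\NN$ — that legitimizes the extension to negative arguments, since this is what forces each constituent polynomial, and hence $L_\P$ itself, to agree with the honest polynomial $P_\L$ everywhere.
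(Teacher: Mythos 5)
Your proposal is correct and follows essentially the same route as the paper: invoke \cref{thm:twopolynomials} for the agreement on $\NN$, then use the rigidity of quasi-polynomials (a quasi-polynomial agreeing with a polynomial on all positive integers must equal that polynomial) to extend the identity to all of $\ZZ$. The paper compresses this rigidity step into a single sentence, whereas you spell it out via the constituent polynomials $f_1,\ldots,f_T$ and the fact that a polynomial is determined by its values on an infinite set; that is exactly the intended argument.
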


\begin{proof}
	The claim is true for positive $n$ by \cref{thm:twopolynomials}. Since $L_\P$ is a quasi-polynomial that coincides with the polynomial $P_\L$ on all positive integers, it must be a polynomial itself. Since both polynomials $L_\P$ and $P_\L$ coincide on all positive integers, they must coincide on all integers.
\end{proof}

We want to save one important observation in this proof for later purposes. It does not even need the Gorenstein hypothesis.

\begin{cor}\label{cor:degenerationpolynomial}
	Let $\P$ be a rational convex polytope. Then the associated pair $(X_\P, D_\P)$ can only be the limit of a normal projective variety and an ample line bundle under a toric degeneration if the Ehrhart quasi-polynomial of $\P$ is a polynomial.
\end{cor}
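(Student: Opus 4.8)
The plan is to observe that Corollary~\ref{cor:degenerationpolynomial} is essentially an abstraction of the argument that already appears in the proof of \cref{cor:twopolynomials}, but with the Gorenstein hypothesis (and hence the line bundle $\omega_X$) stripped away. The key point is that the \emph{first} half of \cref{thm:twopolynomials} never uses that $X$ is Gorenstein: it only needs the Invariance of Euler Characteristic in a flat projective family together with \cref{thm:toricehrhart}. So the strategy is to isolate exactly that half of the earlier computation.

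First I would suppose that the pair $(X_\P, D_\P)$ is the limit of a toric degeneration of some pair $(X, \L)$, where $X$ is a normal projective variety and $\L$ is an ample line bundle. By \cref{thm:toricehrhart} applied to $\P$, we have the identity $\#(n\P \cap \ZZ^d) = \chi(X_\P, \O_{X_\P}(\lfloor nD_\P\rfloor))$ for every $n \in \NN$. On the other hand, the defining property~(iii) of a toric degeneration gives divisorial sheaves $\F^{(n)}$ with $\F^{(n)}|_{\X_t}\simeq \O_X(nD)$ (equivalently $\L^n$) for $t\neq 0$ and $\F^{(n)}|_{\X_0}\simeq \O_{X_\P}(\lfloor nD_\P\rfloor)$. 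Since the degeneration morphism $\pi\colon\X\to\AA^1$ is projective and flat, the Euler characteristic is constant across the family, so
\[ \#(n\P\cap\ZZ^d) = \chi(X_\P, \O_{X_\P}(\lfloor nD_\P\rfloor)) = \chi(X, \L^n) \]
for every $n\in\NN$.

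The final step is a purely formal argument on (quasi-)polynomials, identical to the one closing the proof of \cref{cor:twopolynomials}. The right-hand side $\chi(X, \L^n)$ agrees with the genuine Hilbert polynomial $P_\L(n)$ for all $n$, so in particular it is given by a single polynomial on $\NN$. The left-hand side $\#(n\P\cap\ZZ^d)$ agrees with the Ehrhart quasi-polynomial $L_\P$ on all nonnegative integers by \cref{thm:emr}. A quasi-polynomial whose values on all sufficiently large (indeed all nonnegative) integers coincide with those of a fixed polynomial must itself be a polynomial, because the difference would be a quasi-polynomial vanishing on an infinite arithmetic-progression-covering set, forcing every constituent polynomial $f_i$ to agree with $P_\L$. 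Hence $L_\P$ is a polynomial, which is the assertion.

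I do not expect any genuine obstacle here, since every ingredient is already available: the one subtlety is to make sure the invocation of \cref{thm:toricehrhart} and the flatness of $\pi$ really avoid the Gorenstein hypothesis, which they do because only the round-\emph{down} identity and the constancy of $\chi$ in flat families are used — no appeal to $\omega_X$, adjunction, or the interior-points formula is needed. The mild care point worth flagging is the combinatorial lemma that a quasi-polynomial agreeing with a polynomial on all of $\NN$ is a polynomial; this is standard but is the only place where one must argue rather than merely cite, and it is best dispatched by the observation that each constituent polynomial $f_i$ and $P_\L$ are two polynomials agreeing on an infinite set.
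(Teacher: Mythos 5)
Your proposal is correct and is essentially the paper's own argument: the paper gives no separate proof of \cref{cor:degenerationpolynomial}, but derives it as exactly the observation you make, namely that the first equality in the proof of \cref{thm:twopolynomials} (via \cref{thm:toricehrhart} and invariance of Euler characteristic in flat projective families) and the closing quasi-polynomial argument of \cref{cor:twopolynomials} never use the Gorenstein hypothesis. Your additional remark that the round-down form of \cref{thm:toricehrhart} lets you bypass \cref{thm:dpweil} is a harmless simplification of the same route.
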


We will return to this observation at the end of this section. Another consequence is the following.

\begin{rem}\label{rem:emrserre}
	It is already known (see for example \cite[11.12.4]{D}) that Serre Duality for toric varieties implies Ehrhart-Macdonald Reciprocity for lattice polytopes (notice that Cartier divisors on toric varieties correspond to lattice polytopes by \cite[Theorem 4.2.8]{CLS}). Our formulation in \cref{cor:twopolynomials} generalizes this in two regards.
	
	On one side, we could prove Ehrhart-Macdonald Reciprocity for arbitrary rational convex polytopes by using toric Serre Duality (see for example \cite[Exercise 9.3.5]{CLS} for a rational version) since \cref{thm:toricehrhart} implies
	\begin{align*}
	L_\P(-n) &= \chi(X_\P, \O_{X_\P}(\lfloor -nD_\P \rfloor)) = \chi(X_\P, \O_{X_\P}(-\lceil nD_\P \rceil)) \\&= (-1)^d\chi(X_\P, \O_{X_\P}(\lceil nD_\P \rceil+K_{X_\P})) = (-1)^dL_{\interior\P}(n).
	\end{align*}
	Here we additionally used that the Ehrhart quasi-polynomial of $\P$ and the Hilbert quasi-polynomial of $D_\P$ must coincide on all integers since they coincide on all positive integers.
	
	But most importantly, the reverse argument is also possible! Whenever we have a normal projective Gorenstein variety $(X, \L)$ that admits a toric degeneration to any (normal projective) toric pair $(X_\P, D_\P)$, we get
	\[ \chi(X, \L^{-n}) = L_\P(-n) = (-1)^d L_{\interior\P}(n) = (-1)^d \chi(X, \L^n\otimes\omega_X).\]
	This remarkable fact shows that Serre Duality can be proved by counting lattice points for a broad class of varieties. This fact is depicted in \cref{fig:emrserre}.
\end{rem}

\begin{figure}[htb]\centering
	\caption{Let $X$ be a normal projective complex Gorenstein variety, $\L$ an ample line bundle over $X$ and $\P$ a rational convex polytope such that $(X,\L)$ admits a toric degeneration to $(X_\P, D_\P)$. Then the sketched equalities hold for every $n\in\NN$.}\label{fig:emrserre}
	\begin{tikzpicture}
	\matrix (m) [matrix of math nodes,row sep=3em,column sep=7em,minimum width=2em]
	{
		\chi(X, \L^{-n})     & (-1)^d\chi(X, \L^n\otimes\omega_X) \\
		L_\P(-n) & (-1)^dL_{\interior \P}(n) \\};
	\draw[double distance = 3pt] (m-1-1) -- node[above] {\scriptsize Serre} node[below] {\scriptsize Duality} (m-1-2);
	\draw[double distance = 3pt] (m-1-2) -- (m-2-2);
	\draw[double distance = 3pt] (m-2-1) -- node[above] {\scriptsize Ehrhart-Macdonald} node[below] {\scriptsize Reciprocity} (m-2-2);
	\draw[double distance = 3pt] (m-1-1) -- (m-2-1);
	\end{tikzpicture}
\end{figure}

\section{A Unique Combinatorial Feature of the Anticanonical Bundle}\label{sec:vanishing}

This section is dedicated to proving the following central observation.

\begin{theorem*}[\cref{thm:vanishing}]
	Let $X$ be a Gorenstein Fano variety of dimension $d$ that has rational singularities and $\L$ an ample line bundle. Then the line bundle $\L$ is isomorphic to the anticanonical line bundle $\omega_X^{-1}$ if and only if the Hilbert polynomial of $\L$ fulfills 
	\[ P_\L(n) = (-1)^d P_\L(-n-1)\]
	for all $n\in\NN$.
\end{theorem*}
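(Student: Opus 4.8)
The plan is to prove the two implications separately: the forward direction is a direct application of Serre duality, while for the converse I would bootstrap the resulting equality of Hilbert polynomials into an honest isomorphism of line bundles, using positivity of $\L$ in an essential way. Throughout I would use that a variety with rational singularities is normal and Cohen--Macaulay, and that the Gorenstein hypothesis makes the dualizing sheaf a line bundle $\omega_X$, so that Serre duality reads $\chi(X,\mathcal{F})=(-1)^d\chi(X,\mathcal{F}^{-1}\otimes\omega_X)$ for every line bundle $\mathcal{F}$.

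For the forward implication, suppose $\L\simeq\omega_X^{-1}$. Applying Serre duality to $\mathcal{F}=\L^n$ gives $\chi(X,\L^n)=(-1)^d\chi(X,\L^{-n}\otimes\omega_X)$, and substituting $\L^{-1}\simeq\omega_X$ turns $\L^{-n}\otimes\omega_X$ into $\omega_X^{\,n+1}\simeq\L^{-n-1}$, so that $P_\L(n)=(-1)^dP_\L(-n-1)$ for all $n$. Note that no positivity is used here; this is the easy direction.

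Conversely, assume the functional equation. Serre duality (applied unconditionally) yields $P_\L(n)=(-1)^d\chi(X,\omega_X\otimes\L^{-n})$, so setting $Q(n):=\chi(X,\omega_X\otimes\L^n)$ I obtain $P_\L(n)=(-1)^dQ(-n)$. Both $P_\L$ and $Q$ are genuine polynomials in $n$, so comparing with the hypothesis $P_\L(n)=(-1)^dP_\L(-n-1)$ and using that polynomials agreeing on $\NN$ agree identically, I get $Q(m)=P_\L(m-1)$ for all $m$, i.e. $\chi(X,\mathcal{M}\otimes\L^k)=\chi(X,\L^k)$ for all $k$, where $\mathcal{M}:=\omega_X\otimes\L$. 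Evaluating at $k=0$ gives $\chi(X,\mathcal{M})=\chi(X,\mathcal{O}_X)$. Since $X$ is Fano with rational singularities, Kodaira Vanishing applied to the ample bundle $\omega_X^{-1}$ forces $H^i(X,\mathcal{O}_X)=H^i(X,\omega_X\otimes\omega_X^{-1})=0$ for $i>0$, whence $\chi(X,\mathcal{O}_X)=1$; likewise $H^i(X,\mathcal{M})=H^i(X,\omega_X\otimes\L)=0$ for $i>0$, so $h^0(X,\mathcal{M})=\chi(X,\mathcal{M})=1$. In particular $\mathcal{M}$ is effective, and I may fix a nonzero section $s\in H^0(X,\mathcal{M})$ with zero divisor $E\geq 0$.

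It remains to upgrade this to $\mathcal{M}\simeq\mathcal{O}_X$, and I expect this to be the crux of the argument. For $k\gg 0$, Serre Vanishing applied to $\L$ (and to the eventually ample $\mathcal{M}\otimes\L^k$) converts the equality of Euler characteristics into $h^0(X,\mathcal{M}\otimes\L^k)=h^0(X,\L^k)$. Multiplication by $s$ gives an injection $H^0(X,\L^k)\hookrightarrow H^0(X,\mathcal{M}\otimes\L^k)$, which is therefore an isomorphism for such $k$; consequently every global section of $\mathcal{M}\otimes\L^k$ is divisible by $s$ and hence vanishes along $E$. If $E$ were nonzero, global generation of $\mathcal{M}\otimes\L^k$ for $k\gg 0$ would supply a section not vanishing at a chosen point of $\mathrm{Supp}(E)$, a contradiction. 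Hence $E=0$ and $\mathcal{M}=\omega_X\otimes\L\simeq\mathcal{O}_X$, i.e. $\L\simeq\omega_X^{-1}$. The main obstacle is precisely this final step: equality of Hilbert polynomials alone never determines the isomorphism class of a line bundle (as degree-$0$ bundles on an elliptic curve show), and the Fano hypothesis is exactly what rescues the argument, since it pins down $\chi(X,\mathcal{O}_X)=1$ and, via Kodaira Vanishing, the effectivity of $\mathcal{M}$, after which the positivity of $\L$ finishes the proof.
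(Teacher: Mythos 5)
Your proposal is correct and follows essentially the same route as the paper's proof: Serre duality for the forward direction, then Kodaira Vanishing for rational singularities to get $h^0(X,\omega_X\otimes\L)=\chi(X,\omega_X\otimes\L)=1$ and a section $s$, and finally Serre Vanishing plus the Euler-characteristic identity to show $s$ trivializes $\omega_X\otimes\L$. The only (cosmetic) difference is the closing step: the paper shows the cokernel $\mathcal{C}$ of multiplication by $s$ satisfies $\chi(X,\mathcal{C}\otimes\L^n)=0$ and is globally generated for $n\gg 0$, hence vanishes, while you show the zero divisor $E$ of $s$ is empty via the dimension count $h^0(X,\omega_X\otimes\L^{k+1})=h^0(X,\L^k)$ and global generation — two phrasings of the same argument.
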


\begin{proof}
	Notice first that Serre Duality implies that
	\[ P_\L(-n-1) = \chi(X, \L^{-n-1}) = (-1)^d \chi(X, \L^{n+1}\otimes\omega_X).\]
	
	So the first implication is obvious, since 
	\[ P_{\omega_X^{-1}}(-n-1) = (-1)^d \chi(X, \omega_X^{-n-1}\otimes \omega_X) =(-1)^d\chi(X, \omega_X^{-n}) = (-1)^d P_{\omega_X^{-1}}(n)\]
	for every $n\in\NN$.
	
	For the other implication let us notice that Kodaira Vanishing for Rational Singularities (see \cite[Theorem 2.70]{KM}, which can be proved as a consequence of the Grauert-Riemenschneider Vanishing Theorem in \cite[Satz 2.3]{GR}) implies that 
	\[\chi(X, \L^{n+1}\otimes \omega_X) = h^0(X, \L^{n+1}\otimes \omega_X)\] 
	for every $n\in\NN$. So for the special case $n = 0$ the assumption on the Hilbert polynomial implies that 
	\begin{align*}h^0(X, \L\otimes\omega_X) &= \chi(X, \L\otimes\omega_X) = (-1)^d P_\L(-1) \\ &=P_\L(0) = \chi(X, \O_X) = h^0(X, \O_X) = 1.\end{align*}
	Chose a non-zero section $s \in H^0(X, \L\otimes\omega_X)$ and consider the natural morphism $f\colon \O_X \to \L\otimes\omega_X$ given by 
	\[f(U) \colon \O_X(U) \to (\L\otimes\omega_X)(U), \hspace{10pt} \xi \mapsto \xi \cdot s|_U,\]
	on every open set $U \subseteq X$.
	
	Since $\L\otimes\omega_X$ is a line-bundle, it is torsion-free. So the maps $f(U)$ and hence the morphism $f$ are injective. Thus we get a short exact sequence
	\[ 0 \to \O_X \stackrel{f}{\to} \L\otimes\omega_X \to \C \to 0  \]
	for some coherent sheaf $\mathcal{C}$. We want to show that $\C\simeq 0$. 
	Recall that Euler characteristic is additive on short exact sequences (see for example \cite[Lemma 2.5.2]{EGAIII1}). Hence tensoring with $\L^n$ and taking Euler characteristic yields
	\[\chi(X, \L^n\otimes\C) = \chi(X, \L^{n+1}\otimes\omega_X) - \chi(X, \L^n) = (-1)^dP_\L(-n-1) - P_\L(n) = 0 \]
	for every $n \in \NN$. 
	
	By the Serre Vanishing Theorem (see for example \cite[Proposition 2.2.2]{EGAIII1}) we can chose $n$ big enough such that the sheaf $\mathcal{C}\otimes\L^n$ is globally generated and its higher cohomologies vanish. Hence
	\[ h^0(X, \mathcal{C}\otimes\L^n) = \chi(X, \mathcal{C}\otimes\L^n) = 0. \]
	Since $\mathcal{C}\otimes\L^n$ is globally generated, this is only possible if $\mathcal{C}\otimes\L^n \simeq 0$, i.e. $\mathcal{C} \simeq 0$.
	
	In conclusion we see that $\L\otimes\omega_X \simeq \O_X$ holds\,---\,or equivalently $\L \simeq \omega_X^{-1}$, which proves the claim.
\end{proof}

\section{Anticanonically Polarized Toric Degenerations}\label{sec:main}

We are now able to prove our main result. A sketch of the proof is presented in \cref{fig:proof}.

\begin{proof}[Proof of \cref{thm:main}]		
	Let $(X, \L)$ be a polarized Gorenstein Fano variety and suppose that it admits a toric degeneration to the pair $(X_\P, D_\P)$, where $\P\subseteq\RR^d$ denotes a full-dimensional rational convex polytope, $X_\P$ the associated toric variety and $D_\P$ the associated torus invariant ample $\QQ$-Cartier Weil divisor on $X_\P$ (see \cref{prop:dpample,thm:dpweil}).
	
	Let us first assume that $\L\simeq\omega_X^{-1}$. Notice that the isomorphism from \cref{prop:clx} yields $\F^{(1)} \simeq \O_\X(-K_\X)$ and recall that $\X_0$ is a principal prime divisor on $\X$ by \cref{prop:x0}. So the Adjunction Formula (see for example \cite[Proposition 30.4.8]{V}) shows that 
	\[ K_{\X_0} = (K_\X+\X_0)|_{\X_0} \sim K_\X|_{\X_0}\]
	which implies 
	\[ \O_{X_\P}(D_\P) \simeq \F^{(1)}|_{\X_0} \simeq \O_{\X_0} (-K_{\X_0}) \]
	and thus $D_\P\sim-K_{X_\P}$.	
	Additionally, since $-K_{X_\P} \sim D_\P$ is $\QQ$-Cartier and ample, the toric limit variety $X_\P$ is $\QQ$-Gorenstein Fano.
	
	For the reverse implication let us assume that $D_\P\sim -K_{X_\P}$ and hence $X_\P$ is $\QQ$-Gorenstein Fano. By \cref{prop:toricdivisors} we know that
	\[ \P = \P_{D_\P} = \P_{-K_{X_\P}} + s\]
	for some lattice vector $s\in\ZZ^d$.	
	Since the toric variety $X_\P$ is $\QQ$-Gorenstein Fano, \cref{thm:toricfano} (following Nill \cite[Proposition 1.4]{N} and Batyrev \cite[Proposition 2.2.23]{B}) shows that the dual of the polytope $\P_{-K_{X_\P}}$ is Fano. Hence $\P$ is dual-integral (it would even be dual-Fano), so Hibi's \cref{thm:hibi} implies that the Ehrhart polynomial $L_\P$ of $\P$ fulfills
	\[ L_\P(n) = (-1)^dL_{\P}(-n-1)\]
	for all $n\in\NN$. 	
	We have seen in \cref{cor:twopolynomials} that this Ehrhart quasi-polynomial and the Hilbert polynomial $P_\L$ of $X$ and $\L$ coincide, hence
	\[ P_\L(n) = (-1)^dP_\L(-n-1) \]
	for every $n\in\NN$. Since $X_\P$ has rational singularities (see for example \cite[Theorem 11.4.2]{CLS}), Elkik's result \cite[Théorème 4]{E} proves that $X$ has rational singularities as well. So we can apply \cref{thm:vanishing}, which implies that $\L$ is isomorphic to $\omega_X^{-1}$. This concludes the proof.
\end{proof}

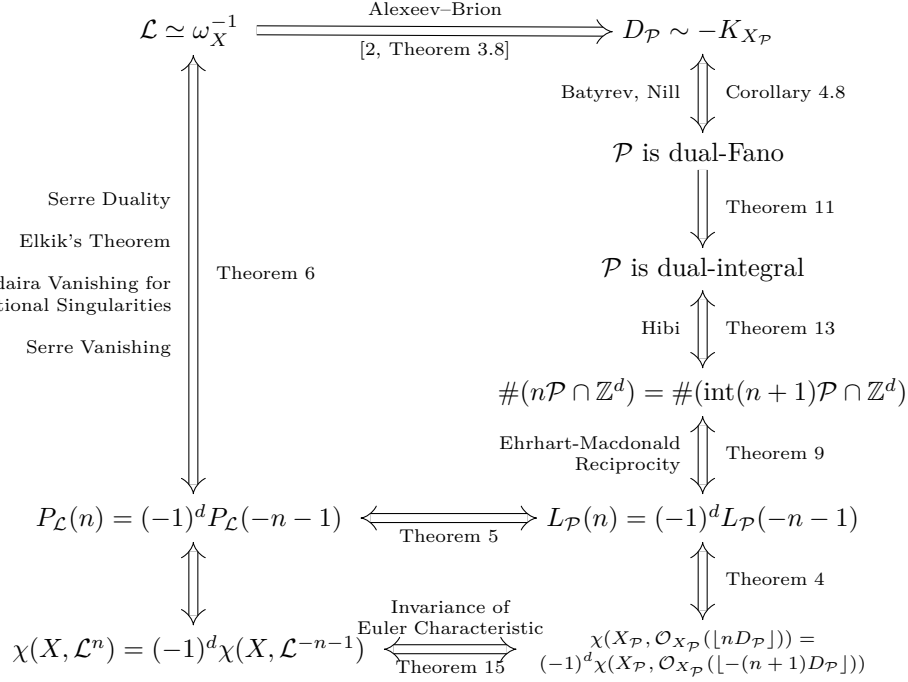
\begin{figure}[htb]
	\centering
	\caption{Sketch of the proof of \cref{thm:main}. Notation as in the formulation of its proof. The equations are supposed to hold for every $n\in\NN$.}\label{fig:proof}
	\begin{tikzpicture}[>=stealth,>=implies]
	\matrix (m) [matrix of nodes,align=center,row sep=3em,column sep=4em,minimum width=2em]
	{
		$\L\simeq\omega_X^{-1}$     & $D_\P \sim -K_{X_\P}$ \\
		& $\P$ is dual-Fano \\
		& $\P$ is dual-integral\\
		&$\#(n\P\cap\ZZ^d) = \#(\interior(n+1)\P \cap \ZZ^d)$\\
		$P_\L(n) = (-1)^d P_\L(-n-1)$ &$L_\P(n) = (-1)^d L_\P(-n-1)$\\		
		$\chi(X, \L^n) = (-1)^d\chi(X, \L^{-n-1})$ & {\scriptsize\begin{tabular}{c}$\chi(X_\P, \O_{X_\P}(\lfloor nD_\P\rfloor)) = $\\$(-1)^d \chi(X_\P, \O_{X_\P}(\lfloor -(n+1)D_\P\rfloor))$\end{tabular}}\\};
	\draw[->,double distance = 3pt] (m-1-1) -- node[above] {\scriptsize\begin{tabular}{c}Alexeev--Brion\end{tabular}} node[below] {\scriptsize\begin{tabular}{c}\cite[Theorem 3.8]{AB}\end{tabular}}(m-1-2);
	\draw[<->,double distance = 3pt] (m-1-1) -- node[left,align=right] {\scriptsize \begin{tabular}{r}Serre Duality\\\\ Elkik's Theorem\\\\Kodaira Vanishing for \\Rational Singularities\\\\Serre Vanishing\end{tabular}} node[right] {\scriptsize \begin{tabular}{l}\\\\\\\cref{thm:vanishing}\\\\\\\\\end{tabular}}(m-5-1);
	\draw[<->,double distance = 3pt] (m-5-1) -- (m-6-1);
	\draw[<->,double distance = 3pt] (m-6-1) -- node[above,align=center]{\scriptsize\begin{tabular}{c}Invariance of \\Euler Characteristic\end{tabular}} node[below] {\scriptsize\begin{tabular}{c}\cref{thm:twopolynomials}\end{tabular}} ([yshift=1.25pt]m-6-2.west);
	\draw[<->,double distance = 3pt] (m-5-1) -- node[below] {\scriptsize\begin{tabular}{c}\cref{cor:twopolynomials}\end{tabular}} (m-5-2);
	\draw[<->,double distance = 3pt] (m-5-2) -- node[left] {\scriptsize\begin{tabular}{r}Ehrhart-Macdonald\\Reciprocity\end{tabular}} node[right] {\scriptsize\begin{tabular}{l}\cref{thm:emr}\end{tabular}} (m-4-2);
	\draw[<->,double distance = 3pt] (m-6-2) --  node[right] {\scriptsize\begin{tabular}{l}\cref{thm:toricehrhart}\end{tabular}} (m-5-2);
	\draw[<->,double distance = 3pt] (m-4-2) -- node[left] {\scriptsize\begin{tabular}{r}Hibi\end{tabular}} node[right] {\scriptsize\begin{tabular}{l}\cref{thm:hibi}\end{tabular}}(m-3-2);
	\draw[<-,double distance = 3pt] (m-3-2) -- node[right] {\scriptsize\begin{tabular}{l}\cref{thm:dualfano}\end{tabular}} (m-2-2);
	\draw[<->,double distance = 3pt] (m-2-2) -- node[left] {\scriptsize\begin{tabular}{r}Batyrev, Nill\end{tabular}} node[right] {\scriptsize\begin{tabular}{l}\cref{cor:dualfano}\end{tabular}}(m-1-2);	
	\end{tikzpicture}
\end{figure}

The following consequence follows directly. It might prove useful in the hunt for mirror symmetry since it yields a necessary condition for reflexive polytopes to appear in the setting of toric degenerations.

\begin{theorem*}[\cref{thm:reflexive}]
	Let $X$ be a Gorenstein Fano variety and $\L$ an ample line bundle over $X$. Suppose that $(X, \L)$ admits a toric degeneration to the pair $(X_\P, D_\P)$ associated to the rational convex polytope $\P$. Then $\P$ is quasi-reflexive if and only if $\P$ is a lattice polytope and $\L$ is isomorphic to the anticanonical bundle over $X$.
\end{theorem*}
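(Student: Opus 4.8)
The plan is to combine \cref{thm:main} with \cref{cor:dualfano} part (iii), reducing the quasi-reflexivity of $\P$ to the two conditions in the statement. First I would recall that by \cref{cor:dualfano}(iii), the polytope $\P$ is quasi-reflexive if and only if it is a lattice polytope \emph{and} it is dual-Fano (equivalently $\P = p + \P_{-K_{X_\P}}$ for some interior lattice point $p$). So the task splits naturally: I must show that, under the toric degeneration hypothesis, the dual-Fano condition on $\P$ is equivalent to the anticanonical condition $\L \simeq \omega_X^{-1}$, while the lattice condition on $\P$ is carried along independently.

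For the forward direction, suppose $\P$ is quasi-reflexive. Then $\P$ is a lattice polytope by definition, which gives one of the two desired conclusions immediately. It remains to deduce $\L\simeq\omega_X^{-1}$. Since $\P$ is quasi-reflexive, it is in particular dual-Fano (by \cref{thm:dualfano}(i)), hence dual-integral; by \cref{cor:dualfano}(ii) we may write $\P = p + \P_{-K_{X_\P}}$, so $\P = \P_{-K_{X_\P}} + p$ means $D_\P \sim -K_{X_\P}$ via \cref{prop:toricdivisors}(iv). This is exactly the hypothesis ``$X_\P$ is $\QQ$-polarized by its anticanonical divisor'' appearing in \cref{thm:main}, and that theorem then forces the polarization on the original variety to be anticanonical, i.e. $\L\simeq\omega_X^{-1}$.

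For the reverse direction, assume $\P$ is a lattice polytope and $\L\simeq\omega_X^{-1}$. By \cref{thm:main} applied in the direction where the original polarization is anticanonical, we obtain $D_\P\sim -K_{X_\P}$, so by \cref{prop:toricdivisors}(iv) the polytope $\P$ is a lattice translate of $\P_{-K_{X_\P}}$. Combined with \cref{cor:dualfano}(ii) this says $\P$ is dual-Fano, hence dual-integral. But a dual-integral lattice polytope is quasi-reflexive by \cref{thm:dualfano}(iii), which closes the loop.

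The routine bookkeeping here is the translation between the divisorial relation $D_\P\sim -K_{X_\P}$ and the polytope statement $\P = p+\P_{-K_{X_\P}}$; I expect the only genuine subtlety to be confirming that the lattice point $p$ guaranteed by the standard-form descriptions in \cref{cor:dualfano} is interior (so that the translate really is reflexive in the sense required), which is handled by \cref{rem:dualfano} and \cref{cor:hibi} since a dual-integral polytope contains a unique interior lattice point. Thus the main substance is entirely absorbed into \cref{thm:main} and \cref{cor:dualfano}, and this corollary is essentially a repackaging that separates the integrality of $\P$ from the anticanonicity of $\L$.
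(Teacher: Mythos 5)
Your proposal is correct and takes essentially the same route as the paper: the paper gives no separate proof, presenting the statement as a consequence that ``follows directly'' from \cref{thm:main} combined with the combinatorial dictionary of \cref{thm:dualfano}, \cref{cor:dualfano} and \cref{prop:toricdivisors}, which is exactly the bookkeeping you spell out. The one citation slip is that in the forward direction you invoke \cref{prop:toricdivisors}(iv) to pass from $\P = p + \P_{-K_{X_\P}}$ to $D_\P \sim -K_{X_\P}$, whereas that proposition is stated in the opposite direction; the implication you actually need is the standard one-line fact that translating a polytope by a lattice vector changes the associated torus invariant divisor by the divisor of a character, so its linear equivalence class is unchanged.
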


There is another, more delicate consequence. To find it, let us review the equivalences and implications of our proof schematically in \cref{fig:proof}. Essentially, we have proved that every dual-integral polytope appearing in the context of toric degenerations of polarized Gorenstein Fano varieties must already be a dual-Fano polytope. So the polytope must {\em know} that it is the limit of a polarized variety under a toric degeneration. We already noticed in \cref{thm:dpweil} that not every polytope can appear as the polytope associated to the toric limit divisor since that divisor must be Weil.

Philosophically speaking, this information should be contained in the combinatorics of the polytope. In fact, there is one on-the-fly result we obtained but never used in later arguments. In \cref{cor:degenerationpolynomial} we showed that every polytope whose associated toric pair is the limit of a polarized Gorenstein Fano variety must be a quasi-lattice polytope, i.e. its Ehrhart quasi-polynomial must be a polynomial.

By this reason\,---\,and from examples\,---\,we reach the following conjecture.

\begin{conj}\label{conj:dualfano}
	A convex polytope is dual-Fano if and only if it is a dual-integral quasi-lattice polytope.
\end{conj}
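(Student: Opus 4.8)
The plan is to reduce both directions to the combinatorial normal form provided by \cref{cor:dualfano}. Since translating a polytope by a lattice vector alters neither its dual-integrality, its dual-Fano property, nor its Ehrhart quasi-polynomial\,---\,because $\#(n(\P-p)\cap\ZZ^d)=\#(n\P\cap\ZZ^d)$ whenever $p\in\ZZ^d$\,---\,I would first normalize so that the unique interior lattice point $p_\P$ (which exists by \cref{cor:hibi}) is the origin. After this normalization, \cref{cor:dualfano} describes a dual-integral polytope as exactly one of the form $\P=\{x\in\RR^d\mid\langle x,u_\rho\rangle\le k_\rho^{-1}\}$ with $k_\rho\in\ZZ_{>0}$, and it is dual-Fano precisely when every $k_\rho=1$. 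Since dual-Fano $\Rightarrow$ dual-integral is already \cref{thm:dualfano}, the full content of the conjecture reduces to the single assertion that, among dual-integral polytopes, being quasi-lattice is equivalent to having all coefficients $k_\rho$ equal to $1$.

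For the implication ``dual-integral and quasi-lattice $\Rightarrow$ dual-Fano'' I would argue by contraposition: assuming some $k_{\rho_0}>1$, the goal is to produce a nonconstant periodic coefficient in the Ehrhart quasi-polynomial. The natural idea is to localize at the facet $H_{u_{\rho_0},\,k_{\rho_0}^{-1}}$, whose $n$-th dilate is cut out by $\langle x,u_{\rho_0}\rangle\le n/k_{\rho_0}$; the lattice points gained as $n$ runs through a fixed residue class modulo $k_{\rho_0}$ ought to be controlled by a genuinely $k_{\rho_0}$-periodic correction term, so that the minimal period of $L_\P$ cannot be $1$. Combining the round-down identities of \cref{prop:round2} with Ehrhart--Macdonald Reciprocity (\cref{thm:emr}) and Hibi's identity (\cref{thm:hibi}), which holds for every dual-integral polytope, provides convenient bookkeeping for these correction terms. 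This is the direction for which I would expect the plan to have a genuine chance of succeeding, although even here the required cancellation phenomena in dimension $d\ge 2$ are delicate.

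The main obstacle lies in the opposite implication ``dual-Fano $\Rightarrow$ quasi-lattice'', and this is where I expect the plan to break down. One is tempted to write, via \cref{thm:toricehrhart}, $L_\P(n)=\chi(X_\P,\O_{X_\P}(-nK_{X_\P}))$ (the round-down of $-nK_{X_\P}$ being itself, as its coefficients are already integral) and to hope that a Riemann--Roch argument forces this Euler characteristic to be polynomial in $n$. But $X_\P$ is only $\QQ$-Gorenstein, $-K_{X_\P}$ need not be Cartier, and there is no mechanism forcing the quasi-polynomial to collapse to a polynomial. Concretely, the second polytope of \cref{ex:dualfano}, namely $\P=\{(x,y)\in\RR^2\mid x\ge -1,\ y\ge -1,\ x+3y\le 1\}$, is dual-Fano with $p_\P=0$, yet a direct count yields $L_\P(0),\dots,L_\P(5)=1,9,26,51,84,126$, whose second differences $9,8,8,9,\dots$ are not constant; hence $L_\P$ is a genuine quasi-polynomial and $\P$ is not quasi-lattice. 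Thus dual-Fano does \emph{not} imply quasi-lattice, so the conjecture as stated fails in this direction. The honest conclusion of this plan is therefore that the biconditional must be weakened: one should retain only the implication that a dual-integral quasi-lattice polytope is dual-Fano, which is exactly what the first two paragraphs are designed to establish.
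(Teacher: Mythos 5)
The paper offers no proof for you to be compared against: this statement is explicitly left as a conjecture, and the author records being unable to prove \cref{conj:quasilattice}, from which it would follow. What you have written is accordingly not a proof but a refutation, and the refutation is correct. The second polytope of \cref{ex:dualfano}, $\P=\{(x,y)\in\RR^2\mid x\geq -1,\ y\geq -1,\ x+3y\leq 1\}$, is dual-Fano exactly as the paper asserts: its polar dual is $\conv((-1,0),(0,-1),(1,3))$, a triangle with primitive lattice vertices. Your counts $L_\P(0),\dots,L_\P(5)=1,9,26,51,84,126$ are correct, and one can make the failure exact: the constituents of the Ehrhart quasi-polynomial are
\[ L_\P(n)=\frac{25n^2+25n+6}{6}\ \ (n\equiv 0,2 \bmod 3), \qquad L_\P(n)=\frac{25n^2+25n+4}{6}\ \ (n\equiv 1 \bmod 3), \]
a genuine quasi-polynomial of minimal period $3$. (Consistently with \cref{thm:hibi} it does satisfy $L_\P(n)=L_\P(-n-1)$, as any dual-integral polytope must; Hibi's identity constrains the quasi-polynomial but does not collapse it to a polynomial.) Since by \cref{thm:emr} the Ehrhart quasi-polynomial of a planar polytope has degree $2$, your non-constant second differences $9,8,8,9$ already rule out polynomiality. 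So $\P$ is dual-Fano but not quasi-lattice, and the ``only if'' direction of \cref{conj:dualfano} is false, witnessed by the paper's own example. Your example simultaneously refutes \cref{conj:quasilattice}: all three facet coefficients of $D_\P$ here equal $1$, so $D_\P$ is Weil, yet $L_\P$ is not a polynomial.

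Two caveats. First, your sketch of the surviving implication (dual-integral and quasi-lattice $\Rightarrow$ dual-Fano) is only heuristic: the assertion that a facet with $k_{\rho_0}>1$ forces a nontrivial period in $L_\P$ is precisely what would need proof, and nothing in your outline excludes cancellation among the correction terms contributed by several facets; that direction remains open. Second, your counterexample contradicts no theorem of the paper: by \cref{cor:degenerationpolynomial} this $\P$ can never arise as the limit of a toric degeneration of a polarized Gorenstein Fano variety, so the conjecture fails exactly outside the class of polytopes that the paper's theorems actually control\,---\,which is worth stating explicitly if this observation is recorded anywhere.
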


In other words this conjecture can be formulated as follows (by \cref{cor:dualfano}).

\begin{conj}
	Let $\P$ be a full-dimensional rational convex polytope and suppose for every ray $\rho$ in the normal fan $\Sigma_\P$ of $\P$ there exist primitive lattice vectors $u_\rho$ and a strictly positive integer $k_\rho > 0$ such that
	\[  \P = \{ x \in \RR^d \mid \langle x, u_\rho \rangle \leq k_\rho^{-1} \text{ for all } \rho\in\Sigma_\P(1)\}.\]
	Then the Ehrhart quasi-polynomial $L_\P$ of $\P$ is a polynomial if and only if all integers $k_\rho$, $\rho\in\Sigma_\P(1)$, are equal to $1$.
\end{conj}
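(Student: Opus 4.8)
The plan is to establish both implications through the cohomological translation furnished by \cref{thm:toricehrhart}. By \cref{cor:dualfano} the hypothesis already forces $\P$ to be dual-integral, so after translating by its unique interior lattice point we may assume this point is the origin and write
\[ D_\P = \sum_{\rho\in\Sigma_\P(1)} k_\rho^{-1} D_\rho, \]
the condition that all $k_\rho$ equal $1$ being exactly the condition $D_\P = \sum_\rho D_\rho = -K_{X_\P}$, i.e. that $\P$ is dual-Fano. Since $\lfloor nD_\P\rfloor = \sum_\rho \lfloor n/k_\rho\rfloor D_\rho$, \cref{thm:toricehrhart} rewrites the lattice-point count as an Euler characteristic,
\[ L_\P(n) = \chi\left(X_\P, \O_{X_\P}\left(\sum_{\rho\in\Sigma_\P(1)} \lfloor n/k_\rho\rfloor D_\rho\right)\right) \qquad (n\in\NN), \]
so the entire problem is to decide when this Euler characteristic is a genuine polynomial in $n$ rather than a properly periodic quasi-polynomial.

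For the direction asserting that all $k_\rho=1$ makes $L_\P$ a polynomial, the divisor is already integral, $\lfloor nD_\P\rfloor = -nK_{X_\P}$, and the goal reduces to showing that $n\mapsto\chi(X_\P, \O_{X_\P}(-nK_{X_\P}))$ is a polynomial. I would approach this through asymptotic Riemann--Roch: since $-K_{X_\P}$ is $\QQ$-Cartier, this function is automatically a quasi-polynomial whose period divides the Gorenstein index $r$ of $X_\P$ (the least $r$ with $-rK_{X_\P}$ Cartier, equivalently the denominator of $\P$). The plan is to prove that all of its periodic coefficients are constant, so that the period collapses to $1$, attacking this by an exact cone-by-cone (Brion-type) evaluation of the Euler characteristic that isolates the fractional contributions of the individual maximal cones.

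For the converse I would argue by contraposition. If some $k_{\rho_0}>1$, then the facet $\langle x,u_{\rho_0}\rangle = k_{\rho_0}^{-1}$ contains no lattice points, the denominator of $\P$ exceeds $1$, and the coefficient $\lfloor n/k_{\rho_0}\rfloor$ appearing above is genuinely periodic of period $k_{\rho_0}$. The plan is to show that the periodic correction carried by this facet survives in $L_\P$ and is not annihilated by the contributions of the remaining facets --- for instance by isolating the lattice points in a thin slab along $u_{\rho_0}$ and applying Ehrhart--Macdonald reciprocity there --- thereby exhibiting a nonconstant periodic coefficient. This is the more tractable direction, although the potential cancellation against the other facets already makes it nontrivial.

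The step I expect to be the genuine obstacle is the first implication, and specifically the claim that the period-$r$ corrections to $\chi(X_\P, \O_{X_\P}(-nK_{X_\P}))$ vanish. A priori these corrections are nonzero precisely when the Gorenstein index $r$ exceeds $1$, so the full force of the statement rests on the assertion that the anticanonical normalization forces them to cancel. Here one must be careful: the functional equation $L_\P(n)=(-1)^dL_\P(-n-1)$ of \cref{thm:hibi} holds for \emph{every} dual-integral polytope, including those of index $>1$, and is therefore satisfied by genuinely periodic quasi-polynomials as well, so it cannot by itself single out the polynomial case --- any successful argument must exploit the dual-Fano hypothesis far more essentially. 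I would consequently concentrate all effort on an exact evaluation of these periodic corrections, calibrating the computation against the $\QQ$-Gorenstein toric surfaces of index $>1$ (such as the second polytope of \cref{ex:dualfano}), whose periodic terms ultimately control whether the implication can in fact be carried through.
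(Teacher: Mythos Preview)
The statement you are attempting to prove is presented in the paper as a \emph{conjecture}, not a theorem: the author explicitly writes ``Unfortunately, we were not able to state a proof of this claim'' regarding \cref{conj:quasilattice}, and the present statement is offered as an equivalent reformulation of \cref{conj:dualfano}, which the paper says would follow from \cref{conj:quasilattice}. There is therefore no proof in the paper to compare your proposal against.

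Your proposal is itself not a proof but a strategic outline, and you are candid about this: you identify the implication ``all $k_\rho=1$ $\Rightarrow$ $L_\P$ is a polynomial'' as the genuine obstacle and note that even the converse is nontrivial because of possible cancellation among facets. Both assessments are accurate. The forward implication amounts to showing that every dual-Fano polytope is a quasi-lattice polytope, which is precisely the open content of \cref{conj:dualfano}; your remark that Hibi's functional equation alone cannot isolate the polynomial case is correct and explains why the paper's existing machinery does not close this gap. The reverse implication runs up against the phenomenon of period collapse, in which rational non-lattice polytopes can nonetheless have polynomial Ehrhart functions; your slab argument would need to rule this out in the dual-integral setting, and the paper offers no mechanism for doing so. In short, your plan is a reasonable research outline, but neither direction is settled, and the paper does not claim otherwise.
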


This conjecture would immediately follow from the following one.

\begin{conj*}[\cref{conj:quasilattice}]
	The divisor associated to a full-dimensional rational convex polytope is a Weil divisor on the toric variety associated to the polytope if and only if the polytope is a quasi-lattice polytope.
\end{conj*}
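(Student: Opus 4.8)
The natural bridge is \cref{thm:toricehrhart}, which identifies the lattice-point count with an Euler characteristic, $L_\P(n) = \chi(X_\P, \O_{X_\P}(\lfloor n D_\P\rfloor))$ for all $n\in\NN$. Writing $\P = \{x\in\RR^d : \langle x, u_\rho\rangle \le b_\rho \text{ for all } \rho\in\Sigma_\P(1)\}$, the divisor $D_\P = \sum_{\rho} b_\rho D_\rho$ is a Weil divisor exactly when every support number $b_\rho$ lies in $\ZZ$. The plan is to phrase both implications as statements about the quasi-polynomial $n\mapsto\chi(X_\P,\O_{X_\P}(\lfloor n D_\P\rfloor))$ and to control its non-leading, periodic coefficients by playing its description as a lattice-point count against a toric Riemann--Roch expansion of the same Euler characteristic (in the spirit of the Brion--Vergne and Pommersheim lattice-point formulae), in which $\chi$ is written as a sum of contributions localised at the cones of $\Sigma_\P$.

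First I would treat the direction ``$D_\P$ Weil $\Rightarrow$ $\P$ quasi-lattice''. If every $b_\rho\in\ZZ$ then $\lfloor n D_\P\rfloor = n D_\P$, so $L_\P(n) = \chi(X_\P, \O_{X_\P}(n D_\P))$ and it suffices to show that this Euler characteristic is a genuine polynomial in $n$. Its top coefficient is the volume of $\P$ and is automatically polynomial, so the entire difficulty sits in the lower-order corrections, which in the Riemann--Roch expansion are sums of local terms indexed by the cones of $\Sigma_\P$, each a polynomial factor times a periodic contribution controlled by the local class group of the cone. The step to establish is that integrality of the support numbers $b_\rho$ forces all of these periodic contributions to collapse to constants in $n$.

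For the converse ``$\P$ quasi-lattice $\Rightarrow$ $D_\P$ Weil'' I would argue by contraposition. Suppose some $b_{\rho_0}\notin\ZZ$. As $n$ runs through $\NN$ the facet of $n\P$ with outer normal $u_{\rho_0}$ sits at the non-integral height $n b_{\rho_0}$, so the number of lattice points in a thin slab of $n\P$ adjacent to that facet depends on the fractional part of $n b_{\rho_0}$ and is genuinely periodic. Comparing $L_\P(n)$ with the lattice-point count of the polytope obtained by rounding down this single support number---an argument at the level of the defining inequalities, in the spirit of \cref{prop:round2}---should exhibit a periodic term of period greater than $1$ and prove that it cannot be cancelled inside the polynomial part.

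The main obstacle is common to both implications: one must exclude cancellation among the periodic local contributions coming from different cones. This is precisely the phenomenon of period collapse, and it is most severe at non-Gorenstein singular cones, where even the Euler characteristic of an integral ample Weil divisor can be a strictly quasi-polynomial function of $n$, as is already visible on weighted projective spaces. Controlling these corrections finely enough to tie their vanishing to the single arithmetic condition $b_\rho\in\ZZ$ is the crux of the matter; I expect that this is where any naive argument breaks down, and that the equivalence may ultimately require part of the extra structure supplied by the toric-degeneration hypotheses of \cref{thm:dpweil} and \cref{cor:degenerationpolynomial}, with the identification of that minimal extra hypothesis being the real content of the problem.
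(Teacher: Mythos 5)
There is a preliminary point that dominates everything else: the statement you were asked to prove is \cref{conj:quasilattice}, which the paper itself does \emph{not} prove --- the author states explicitly that ``we were not able to state a proof of this claim.'' So there is no proof in the paper to compare against, and any complete argument would be genuinely new content. Judged on its own, your text is a research plan rather than a proof: in both directions you reduce the claim to the assertion that the periodic corrections in a local Riemann--Roch expansion collapse exactly when all support numbers $b_\rho$ are integral, and you explicitly defer that assertion (``the step to establish is \dots'', ``should exhibit a periodic term \dots''). That deferred step is the entire content of the conjecture, so as a proof attempt this has a genuine, admitted gap.

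Moreover, the gap is not merely technical: the step you single out for the direction ``$D_\P$ Weil $\Rightarrow$ $\P$ quasi-lattice'' is false as you (and the conjecture, read literally) state it. Take
\[ \P = \{(x,y)\in\RR^2 \mid x\geq 0,\ y\geq 0,\ 2x+3y\leq 1\} = \conv\bigl((0,0),(\tfrac12,0),(0,\tfrac13)\bigr). \]
Its primitive outer normals are $(-1,0)$, $(0,-1)$, $(2,3)$ with support numbers $0,0,1$, so $D_\P$ is a torus-invariant prime divisor, in particular Weil. But $L_\P(n)$ for $n=0,\dots,7$ takes the values $1,1,2,3,4,5,7,8$, whose second differences $1,0,0,0,1,-1$ are not constant; since the Ehrhart quasi-polynomial of a two-dimensional polytope has degree $2$, and a quadratic polynomial has constant second differences, $\P$ is not quasi-lattice. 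Note that both Weil-ness of $D_\P$ and the Ehrhart quasi-polynomial are invariant under translation by lattice vectors, and this $\P$ contains no interior lattice point at all, so it can never be normalized to the form $0\in\interior\P$ used in the paper's final reformulation of the conjecture; the two formulations are genuinely inequivalent, and the literal one is false. Consequently no bookkeeping of periodic local terms, however careful, can establish the implication you sketch first; any correct attack must first repair the statement by building in an interior-lattice-point normalization (which is automatic in the dual-integral setting of \cref{conj:dualfano} by \cref{cor:hibi}, and in the degeneration setting via \cref{thm:dpweil} and \cref{cor:twopolynomials}). Your closing suspicion that extra hypotheses from the degeneration framework are needed is therefore exactly right --- but it is not an optional refinement, it is forced, and identifying the corrected statement is the first task, prior to any proof.
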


Again, we can give a more combinatorial formulation of this conjecture.

\begin{conj}
	Let $\P$ be a full-dimensional rational convex polytope such that $0\in\interior\P$. Then the Ehrhart quasi-polynomial $L_\P$ of $\P$ is a polynomial if and only if there exist strictly positive integers $m_\rho > 0$ for every ray $\rho\in\Sigma_\P(1)$ such that
	\[  \P = \{ x \in \RR^d \mid \langle x, u_\rho \rangle \leq m_\rho \text{ for all } \rho\in\Sigma_\P(1)\}.\]
	Here $u_\rho$ denotes the primitive ray generator of the ray $\rho\in\Sigma_\P(1)$.
\end{conj}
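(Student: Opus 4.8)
The plan is to transport the purely combinatorial statement into toric cohomology by means of \cref{thm:toricehrhart}, which identifies $L_\P(n)$ with $\chi(X_\P,\O_{X_\P}(\lfloor nD_\P\rfloor))$. The side condition ``there exist positive integers $m_\rho$ with $\P=\{x\mid\langle x,u_\rho\rangle\le m_\rho\}$'' is, because the supporting numbers $b_\rho=\max_{x\in\P}\langle x,u_\rho\rangle$ are uniquely determined by $\P$, exactly the condition that all $b_\rho$ be positive integers, i.e.\ that $D_\P=\sum_\rho b_\rho D_\rho$ be a Weil divisor. Thus the statement is literally \cref{conj:quasilattice}, and I would split it into its two implications and attack them separately.

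For the direction \emph{quasi-lattice} $\Rightarrow$ \emph{integral offsets} I would argue by contraposition. Assuming some $b_{\rho_0}\notin\ZZ$, I want to show $L_\P$ is not a polynomial by isolating the contribution of the facet normal to $\rho_0$: as $n$ ranges over $\NN$ the supporting hyperplane $\langle\,\cdot\,,u_{\rho_0}\rangle=nb_{\rho_0}$ carries a nonzero fractional part $\{nb_{\rho_0}\}$, so the lattice points of $n\P$ in a thin slab adjacent to that facet are counted with genuine period equal to the denominator of $b_{\rho_0}$. In dimension one this is transparent, since $L_{[-a,b]}(n)=\lfloor na\rfloor+\lfloor nb\rfloor+1$ is a polynomial precisely when $a,b\in\ZZ$; I would try to upgrade this via an induction over facets (or a Brion-type cone decomposition of the generating function $\sum_n L_\P(n)t^n$), the delicate point being to rule out that the periodic parts coming from different facets cancel.

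The reverse implication \emph{integral offsets} $\Rightarrow$ \emph{quasi-lattice} is where I expect the real difficulty, and where I would proceed with caution. If $D_\P$ is Weil then $\lfloor nD_\P\rfloor=nD_\P$ and \cref{thm:toricehrhart} reduces the claim to the assertion that $n\mapsto\chi(X_\P,\O_{X_\P}(nD_\P))$ is a genuine polynomial. This is automatic when $D_\P$ is Cartier, but $D_\P$ is in general only $\QQ$-Cartier, and the Euler characteristic of a non-Cartier Weil divisor picks up periodic correction terms localized at the singular cones of $\Sigma_\P$ (lattice-point-defect, Dedekind-sum type contributions). The hard part is exactly to make these corrections vanish, and I do not expect that they do.

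Indeed I would immediately test the claim on the simplest singular toric surface $\mathbb{P}(1,1,2)$, with primitive rays $(1,0),(0,1),(-1,-2)$ and integral offsets $(1,1,2)$; the associated polytope is the triangle $\P=\conv\{(1,1),(1,-\tfrac32),(-4,1)\}$, which contains $0$ in its interior. Substituting $u=n-x_1$, $v=n-x_2$ turns the count into $L_\P(n)=\#\{(u,v)\in\ZZ_{\ge0}^2\mid u+2v\le 5n\}$, and summing gives $L_\P(2k)=(5k+1)^2$ but $L_\P(2k+1)=(5k+3)(5k+4)$; these two polynomials disagree (their constant terms are $1$ and $\tfrac34$), so $L_\P$ has period $2$ although every offset is an integer. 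Hence the statement as phrased is false — the localized corrections of the previous paragraph genuinely survive at non-Gorenstein cones. I would therefore not try to prove it, but rather aim at the robust weakening \cref{conj:dualfano}, in which the offsets are unit fractions $k_\rho^{-1}$ and the value $2$ above is excluded; there the contraposition strategy together with Hibi's criterion (\cref{thm:hibi}) stands a real chance of closing the argument.
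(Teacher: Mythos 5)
There is no proof in the paper for your attempt to be compared against: the statement under review is the ``more combinatorial formulation'' of \cref{conj:quasilattice}, and the author says explicitly that he was unable to prove that claim. So the only thing to assess is your mathematics itself, and its core finding is correct: you have \emph{refuted} the conjecture rather than proved it. Your preliminary reduction is also right: since the facet representation of $\P$ over its normal fan is unique, and $0\in\interior\P$ forces every support number $b_\rho=\max_{x\in\P}\langle x,u_\rho\rangle$ to be positive, the existence of the integers $m_\rho$ is exactly the condition that $D_\P$ be Weil, so the statement is \cref{conj:quasilattice} for polytopes with the origin in their interior.

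I checked your example in detail and it stands. The triangle $\P=\conv\{(1,1),(1,-\tfrac{3}{2}),(-4,1)\}$ has primitive facet normals $(1,0)$, $(0,1)$, $(-1,-2)$ with offsets $(1,1,2)$, contains $0$ in its interior, and its toric variety is $\mathbb{P}(1,1,2)$ with $D_\P$ of class $5$ in $\Cl(X_\P)\cong\ZZ$: a Weil divisor that is not Cartier, since the Cartier classes are exactly the even ones. (This is also why \cref{thm:toricehrhart} creates no contradiction: for a non-Cartier Weil divisor, $n\mapsto\chi(X_\P,\O_{X_\P}(nD_\P))$ is in general only a quasi-polynomial.) Your substitution $u=n-x_1$, $v=n-x_2$ correctly gives $L_\P(n)=\#\{(u,v)\in\ZZ_{\geq0}^2\mid u+2v\leq 5n\}$, and summing yields $L_\P(n)=\tfrac{25}{4}n^2+5n+1$ for $n$ even but $L_\P(n)=\tfrac{25}{4}n^2+5n+\tfrac{3}{4}$ for $n$ odd; no single polynomial interpolates both parities, so $\P$ has integral offsets yet is not quasi-lattice. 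This kills the ``if'' direction of the statement, and with it \cref{conj:quasilattice} itself; in particular the paper's proposed derivation of its unit-fraction conjecture from \cref{conj:quasilattice} collapses. As you correctly note, \cref{conj:dualfano} is untouched, because offsets of the form $k_\rho^{-1}$ exclude the value $2$ occurring here, so that weaker conjecture is the one still worth pursuing. Two caveats for the write-up: your sketch of the forward direction (polynomial $\Rightarrow$ integral offsets) is only a program, and the possible cancellation of periodic contributions from different facets that you flag is a genuine unresolved issue, though it is moot once one direction of the biconditional is false; and the result should be presented as a counterexample to the paper's conjecture, not as a proof of the statement.
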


\section{Recap: The Flag Variety Case}\label{sec:flag}

We will now briefly recall the special case of flag varieties. In the Main Theorem of \cite{S1} we essentially proved the following.

\begin{theorem}\label{thm:nobody}
	Let $G$ be a simple complex algebraic group, $P\subseteq G$ a parabolic and $\lambda$ a $P$-regular dominant integral weight. Let $\Delta(\lambda)$ be a Newton-Okounkov body associated to a full-rank valuation on $\bigoplus_{n\in\NN}H^0(G/P, \L_{n\lambda})$ whose valuation semigroup is finitely generated and saturated. Then the following are equivalent.
	\begin{enumerate}
		\item The weight $\lambda$ is the weight of the anticanonical line bundle over $G/P$.
		\item The Newton-Okounkov body $\Delta(\lambda)$ contains exactly one interior lattice point.
		\item The Newton-Okounkov body $\Delta(\lambda)$ is dual-integral.
	\end{enumerate}
\end{theorem}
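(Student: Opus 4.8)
The plan is to prove the cycle of implications (i) $\Rightarrow$ (iii) $\Rightarrow$ (ii) $\Rightarrow$ (i), exploiting that $G/P$ is smooth projective\,---\,hence Gorenstein with rational singularities\,---\,and Fano, so that every result of the preceding sections applies to $(G/P, \L_\lambda)$, with $\lambda$ a $P$-regular dominant weight making $\L_\lambda$ ample. First I would record the setup: since the valuation has full rank and its value semigroup is finitely generated and saturated, Anderson's theorem (see \cref{rem:birational}) produces a toric degeneration of $(G/P, \L_\lambda)$ to $(X_{\Delta(\lambda)}, D_{\Delta(\lambda)})$ in the strong sense of \cref{defi:toricdegen}, and $\Delta(\lambda)$ is full-dimensional of dimension $d = \dim G/P$. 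With this in place, the general theory of \cref{sec:degen} and \cref{sec:main} becomes available verbatim.

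For (i) $\Rightarrow$ (iii): if $\lambda$ is the anticanonical weight then $\L_\lambda \simeq \omega_{G/P}^{-1}$, and the forward (Alexeev--Brion) direction of \cref{thm:main} yields $D_{\Delta(\lambda)} \sim -K_{X_{\Delta(\lambda)}}$, so $X_{\Delta(\lambda)}$ is an anticanonically polarized $\QQ$-Gorenstein Fano toric variety. The Batyrev--Nill correspondence \cref{thm:toricfano} then makes $\Delta(\lambda)$ dual-Fano (as a lattice translate of the anticanonical polytope), and \cref{thm:dualfano}(ii) upgrades this to dual-integral. The implication (iii) $\Rightarrow$ (ii) is immediate from \cref{cor:hibi}, which states that a dual-integral polytope contains exactly one interior lattice point.

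The crux is (ii) $\Rightarrow$ (i), and this is where smoothness of $G/P$ makes the argument genuinely easier than in the general case. Assuming $\#(\interior \Delta(\lambda) \cap \ZZ^d) = 1$, the second formula of \cref{thm:twopolynomials} with $n=1$ translates this into $\chi(G/P, \L_\lambda \otimes \omega_{G/P}) = 1$. Because $G/P$ is smooth and $\L_\lambda$ is ample, classical Kodaira vanishing annihilates the higher cohomology of $\L_\lambda \otimes \omega_{G/P}$, so this Euler characteristic equals $h^0(G/P, \L_\lambda \otimes \omega_{G/P})$. Writing $\omega_{G/P}^{-1} = \L_\kappa$ for the anticanonical weight $\kappa$, we have $\L_\lambda \otimes \omega_{G/P} = \L_{\lambda-\kappa}$, and Borel--Weil--Bott computes this space of sections: it is nonzero only when $\lambda - \kappa$ is $P$-dominant, in which case its dimension is that of the irreducible module of highest weight $\lambda - \kappa$, which equals $1$ precisely when $\lambda - \kappa = 0$. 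Hence $\lambda = \kappa$, which is (i).

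I expect the main obstacle to lie exactly at this last step\,---\,not because it is technically hard once set up, but because it is the place where the warning that ``one interior lattice point $\not\Rightarrow$ dual-integral'' (which the two-dimensional observation flags as non-generalizing) must be circumvented. The flag-variety structure supplies the missing input: Borel--Weil--Bott lets us read the anticanonical weight directly off a single cohomological dimension, replacing the more delicate vanishing argument of \cref{thm:vanishing} needed in the general proof. The remaining points demand only bookkeeping\,---\,confirming that Anderson's hypotheses indeed give a degeneration in the sense of \cref{defi:toricdegen} so that \cref{thm:twopolynomials} applies, and checking that a convention choice in the identification $\omega_{G/P}^{-1} = \L_\kappa$ is harmless, since $h^0 = 1$ forces the relevant weight to vanish regardless of sign.
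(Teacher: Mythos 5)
Your proof is correct, and its outer skeleton coincides with the paper's: both run the cycle (i)$\Rightarrow$(iii)$\Rightarrow$(ii)$\Rightarrow$(i), both obtain the equivalence of (i) and (iii) from \cref{thm:main} after using \cref{rem:birational} to recognize $\Delta(\lambda)$ as the polytope of a toric degeneration in the sense of \cref{defi:toricdegen}, and both get (iii)$\Rightarrow$(ii) from \cref{cor:hibi}. Where you genuinely depart from the paper is the crux (ii)$\Rightarrow$(i). The paper stays on the combinatorial side: it quotes the closed formula $L_{\Delta(\lambda)}(n)=\prod_{\beta\in\Phi_P^+}\langle n\lambda+\rho,\beta^\vee\rangle/\langle\rho,\beta^\vee\rangle$ from \cite[Lemma 3.1]{S1} and appeals to root combinatorics (carried out in \cite{S1}) showing that this particular polynomial satisfies Hibi's criterion of \cref{rem:hibi} as soon as $\Delta(\lambda)$ has exactly one interior lattice point, i.e.\ as soon as $L_{\Delta(\lambda)}(-1)=(-1)^{\dim G/P}$ by \cref{thm:emr}; so the paper deduces (iii) from (ii) first, and only then (i). You instead prove (ii)$\Rightarrow$(i) directly: \cref{thm:twopolynomials} at $n=1$ converts the unique interior lattice point into $\chi(G/P,\L_\lambda\otimes\omega_{G/P})=1$, Kodaira vanishing (available since $G/P$ is smooth) converts this into $h^0(G/P,\L_{\lambda-\kappa})=1$, and Borel--Weil forces $\lambda=\kappa$ because the only one-dimensional representation of a semisimple group is the trivial one; (iii) then follows from (i). Your route can be seen as a flag-variety shortcut through \cref{thm:vanishing}: there the single value $h^0(X,\L\otimes\omega_X)=1$ was not enough, and the full polynomial identity plus Serre vanishing was needed to conclude $\L\otimes\omega_X\simeq\O_X$, whereas Borel--Weil lets one cohomological value suffice\,---\,which is exactly the ``truly remarkable fact'' about flag varieties that the paper highlights. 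What you gain is independence from the root-combinatorics lemma of \cite{S1}, making the argument self-contained given the earlier sections of this paper; what the paper's version buys is a purely combinatorial criterion (for when the Weyl product polynomial satisfies Hibi's identity) that never mentions cohomology, in keeping with its theme of reading geometry off polytopes. Your two flagged caveats are indeed harmless: \cref{rem:birational} is precisely the assertion that Anderson's hypotheses produce degenerations in the strong embedded sense, and any sign convention in $\omega_{G/P}^{-1}=\L_\kappa$ is absorbed by the fact that $h^0=1$ forces the relevant weight to vanish.
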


However, we really would not have needed to restrict ourselves to Newton-Okounkov bodies. By \cref{rem:birational} these are just special cases of polytopes coming from toric degenerations of $(G/P, \L_\lambda)$. Hence the equivalence of claims (i) and (iii) in \cref{thm:nobody} is just a consequence of \cref{thm:main}. By \cref{cor:hibi} claim (iii) implies claim (ii). But the truly remarkable fact about flag varieties is that (ii) already implies (i) and (iii). 

The reason for this is that we have very precise control over the Ehrhart polynomial $L_{\Delta(\lambda)}$ because it is just given by Borel-Weil-Bott and Weyl's Dimension Formula. As stated in \cite[Lemma 3.1]{S1}, one can compute
\[ L_{\Delta(\lambda)}(n) = \prod_{\beta\in\Phi_P^+}\frac{\langle n\lambda+\rho, \beta^\vee\rangle}{\langle \rho, \beta^\vee\rangle} \]
where $\Phi_P^+$ denotes some subset of positive roots given by the choice of parabolic. Via some basic root combinatorics it is possible to verify that this polynomial fulfills Hibi's criterion (see \cref{rem:hibi}) if and only if it evaluates to $1$ at $n=-1$.

So the question remains whether a given partial flag variety admits a flat projective degeneration to a Gorenstein Fano toric variety (see \cref{question:simplified}). The answer is the following theorem that is essentially \cite[Theorem 6]{S3}.

\begin{theorem*}[\cref{thm:flag}]
	Let $G$ be a complex classical group or $\GG$. Then for any parabolic $P\subseteq G$ the partial flag variety $(G/P, \omega_{G/P}^{-1})$ admits a flat projective degeneration to an anticanonically polarized Gorenstein Fano toric variety.
\end{theorem*}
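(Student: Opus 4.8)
The plan is to reduce \cref{thm:flag} to a purely combinatorial integrality statement and then to import the explicit models available in the classical and $\GG_2$ cases. First I would set $X = G/P$ and $\L = \omega_{G/P}^{-1}$, so that $(X,\L)$ is a polarized Gorenstein Fano variety, and suppose it admits a toric degeneration to $(X_\P, D_\P)$ for some full-dimensional rational convex polytope $\P$. Because the polarization is anticanonical, the already-known direction of \cref{thm:main} (due to Alexeev--Brion) gives $D_\P \sim -K_{X_\P}$, so the limit $X_\P$ is at least $\QQ$-Gorenstein Fano. By \cref{thm:reflexive} this limit is genuinely Gorenstein Fano precisely when $\P$ is a lattice polytope, in which case $\P$ is reflexive. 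Thus the entire statement reduces to exhibiting, for each parabolic $P$, a single toric degeneration of $(G/P, \omega_{G/P}^{-1})$ whose associated polytope is a lattice polytope.

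Next I would produce such a degeneration combinatorially. By \cref{rem:birational}, polytopes arising from birational sequences, and in particular Newton--Okounkov bodies of full-rank valuations with finitely generated and saturated value semigroup, give rise to toric degenerations in the sense of \cref{defi:toricdegen}. Hence it suffices to find, on $\bigoplus_{n\in\NN} H^0(G/P, \L_\lambda^n)$ with $\lambda$ the anticanonical weight, one such valuation whose associated polytope is a lattice polytope. For classical groups and for $\GG_2$ the relevant data are supplied by the explicit Feigin--Fourier--Littelmann--Vinberg and string polytope models, and concretely I would invoke \cite[Theorem 6]{S3}, which realizes exactly these anticanonical degenerations with a lattice polytope as the associated body. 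Feeding this polytope into the reduction of the previous paragraph produces an anticanonically polarized Gorenstein Fano toric variety as the limit.

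The main obstacle is precisely the integrality of the anticanonical polytope. From the recap we already know, via the Weyl-dimension-formula expression $L_{\Delta(\lambda)}(n) = \prod_{\beta\in\Phi_P^+} \langle n\lambda+\rho, \beta^\vee\rangle / \langle \rho, \beta^\vee\rangle$ together with Hibi's \cref{thm:hibi}, that for the anticanonical weight the polytope is dual-Fano; by \cref{cor:dualfano} a dual-Fano polytope becomes reflexive as soon as it is a lattice polytope, so the \emph{only} remaining point is the lattice-ness of the chosen model. This is the genuinely delicate step, and it is what \cite{S3} establishes case by case: one inspects the defining inequalities of the model (the FFLV inequalities, respectively the string cone relations) and checks, using root combinatorics in each classical type and in $\GG_2$, that all vertices are integral. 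It is exactly this verification that forces the restriction of the statement to classical groups and $\GG_2$, since for the exceptional types $\FF$, $\EE$, $\EEE$ and $\EEEE$ no lattice model for the anticanonical weight is presently available.
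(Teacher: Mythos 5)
Your proposal is correct and follows essentially the same route as the paper: reduce the claim via \cref{thm:main} (and \cref{thm:reflexive}) to exhibiting one toric degeneration of $(G/P,\omega_{G/P}^{-1})$ whose associated polytope is a lattice polytope, then import the explicit models\,---\,string polytopes for classical types and the Gornitskii (FFLV-type) polytope for $\GG$\,---\,from \cite{S3}, \cite{AB}, \cite{A} and \cite{FaFL}. The only difference is one of citation granularity: the paper splits the input into \cite[Theorem 5]{S3} plus the degeneration results for classical groups and the Gornitskii polytope plus \cite[Theorem 6]{FaFL} for $\GG$, rather than quoting \cite[Theorem 6]{S3} wholesale.
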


\begin{proof}
	By \cref{thm:main} any toric limit of $(G/P, \omega_{G/P}^{-1})$ will automatically be $\QQ$-Gorenstein Fano and anticanonically polarized. So we just have to show that there exists a toric limit $(X_\P, D_\P)$ with $\P$ being a lattice (hence reflexive) polytope because this means that $D_\P \sim -K_{X_\P}$ is Cartier.
	
	If $G$ is a complex classical group, we have shown in \cite[Theorem 5]{S3} that for every polarized partial flag variety $(G/P, \L_\lambda)$ there exists a string polytope that will be a lattice polytope if $\L_\lambda\simeq\omega_{G/P}^{-1}$. From \cite[Theorem 3.2]{AB} (or equivalently \cite[Theorem 1]{Ka} and \cite[Theorem 1, Corollary 2 and Lemma 3]{A}) it is known that $(G/P, \L_{\omega_{G/P}^{-1}})$ admits a toric degeneration (in our sense) to the toric variety associated to said string polytope, which must therefore be Gorenstein Fano.
		
	If $G$ is of type $\GG$ we can chose the polytope defined by Gornitskii in \cite{G}. One can easily calculate that it is always a lattice polytope. Additionally, by \cite[Section 8.2]{FaFL} and \cite[Theorem 6]{FaFL} it is clear that each polarized partial flag variety admits a toric degeneration to the polarized toric variety associated to the Gornitskii polytope, which must therefore be Gorenstein Fano.
\end{proof}

It would be very interesting to see whether \cref{thm:flag} also holds true for the exceptional Lie groups of type $\EE$, $\EEE$, $\EEEE$ and $\FF$. Additionally, it would be nice to see a class of polytopes that can be used for every type.

\end{document}